\theoremstyle{plain}
\newtheorem{thm}{Theorem}[section]
\newtheorem{lem}{Lemma}[section]
\newtheorem{assum}{Assumption}[section]
\newtheorem{corl}{Corollary}[section]
\theoremstyle{definition}
\theoremstyle{remark}
\newtheorem{remk}{Remark}[section]
\newtheorem{expl}{Example}[section]
\newcommand{\mbR}{{\mathbb R}}
\newcommand{\mbN}{{\mathbb N}}
\newcommand{\ba}{\begin{aligned}}
\newcommand{\ea}{\end{aligned}}
\newcommand{\wt}{\widetilde}
\newcommand{\vf}{\varphi}
\newcommand{\ve}{\varepsilon}
\renewcommand{\lg}{\langle}
\newcommand{\rg}{\rangle}
\newcommand{\const}{\mathop{\rm const}}
\newcommand{\Pb}{\mathrm{P}} % probability
\newcommand{\E}{\mathrm{E}} % expectation
\newcommand{\be}{\begin{equation}}
\newcommand{\bel}{\begin{equation}\label}
\newcommand{\ee}{\end{equation}}
\begin{document}

\title[Small Noise Perturbations in
Multidimensional Case]{Small Noise Perturbations in
Multidimensional Case}

\date{\today}

\author{Andrey Pilipenko}
\address{Institute of Mathematics,  National Academy of Sciences of
Ukraine, Tereshchenkivska str. 3, 01601, Kiev, Ukraine; National Technical University of Ukraine 
``Igor Sikorsky Kyiv Polytechnic Institute'',
ave.\ Pobedy 37, Kiev 03056, Ukraine.}

\author{Frank Norbert Proske}
\address{Department of Mathematics, University of Oslo, PO Box 1053 Blindern, N-316 Oslo, Norway}

\keywords{Zero-noise limit; Peano phenomenon; asymptotic behavior of stochastic differential equations}
\subjclass[2010]{60H10, 49N60}

 \begin{abstract}
In this paper we study zero-noise limits of $\alpha -$stable noise perturbed
ODE's which are driven by an irregular vector field $A$ with asymptotics $%
A(x)\sim \overline{a}(\frac{x}{\left\vert x\right\vert })\left\vert
x\right\vert ^{\beta -1}x$ at zero, where $\overline{a}>0$ is a continuous
function and $\beta<1$. The results established in this article can
be considered a generalization of those in the seminal works of Bafico \cite%
{Ba} and Bafico, Baldi \cite{BB} to the multi-dimensional case.\ Our
approach for proving these results is inspired by techniques in \cite%
{PP_self_similar} and based on the analysis of an SDE for $t\longrightarrow
\infty $, which is obtained through a transformation of the perturbed ODE.  
\end{abstract}

\maketitle

\section{Introduction}% and Main Results}
In order to illustrate small noise analysis for singular ordinary
differential equations (ODE's) from an application's point of view, let us
consider the stochastic gradient method in non-convex optimization:

In deep learning the problem of training neural networks with a
training set can be translated into a problem of the minimization of a
non-convex loss function $F:\mathbb{R}^{d}\longrightarrow \mathbb{R}$. A
popular approach for solving this minimization problem is the
stochastic gradient algorithm (SGD), which is numerically more tractable
than the classical gradient descent method in the case of high dimensional
data; see e.g. \cite{Bu}, \cite{Bot} and the references therein. 

To give more details about this approach, let $f:\mathbb{R}^{d}\times 
\mathbb{R}^{l}\longrightarrow \mathbb{R}$ and $\xi $ be a random variable in $%
\mathbb{R}^{l}$ such that $f(\cdot ,y)$ is twice differentiable for all $%
y\in \mathbb{R}^{l}$ and $F(x)=\E[f(x,\xi )],x\in \mathbb{R}^{d}$. Then a
sequence $x_{k},k\geq 0$, which converges to a local minimizer of $F$, is
constructed recursively via%
\begin{equation*}
x_{k}^{\varepsilon }=x_{k-1}^{\varepsilon }-\varepsilon \nabla f(x_k^\ve,\xi
_{k}),x_{0}^{\varepsilon }=\xi _{0},
\end{equation*}%
where $\varepsilon >0$ is the \emph{learning rate}, $\xi _{k},k\geq 0$ an $%
i.i.d-$sequence of random variables and $\nabla =\nabla _{x}$ the gradient.

Here, a central issue is the study of the problem of how fast the iterations $x_{k}^{\varepsilon },k\geq 0$ can escape
from unstable stationary points, that is from points $x$ with $\nabla F(x)=0$
such that the least eigenvalue of the Hessian $\nabla ^{2}F(x)$ is strictly
negative. More recently (see \cite{HuLi, Feng, LiMalladi}), it could be
shown that the speed of escape from non-stable stationary points can be
analyzed by means of solutions $X_{t},0\leq t\leq T$ to stochastic
differential equations (SDE's) which approximate $x_{k}^{\varepsilon },k\geq
0$ in a certain weak sense. More precisely, it was proven in \cite{HuLi, LiMalladi} for
sufficiently smooth $f$ %(in e.g. $C_{b}^{7}$) 
and positive semi-definite
functions $\sigma :\mathbb{R}^{d}\longrightarrow \mathbb{R}^{d\times d}$
that the solution $X_{\cdot }$ to the SDE%
\begin{equation}
dX_{t}^{\varepsilon }=-\nabla F(X_{t}^{\varepsilon })dt+\sqrt{\varepsilon }%
\sigma (X^{\varepsilon }(t))dW_{t},0\leq t\leq T,X_{0}=x_{0},  \label{SDE}
\end{equation}%
for a Wiener process $W_{\cdot }$, approximates $x_{k}^{\varepsilon },k\geq 0$ in the following sense: For all 
sufficiently smooth $%
\varphi$% \in C_{b}^{6}$
there exist constants $C,\varepsilon _{0}>0$
(depending on $T$ and $\varphi $) such that%
\begin{equation*}
\left\vert E[\varphi (x_{k}^{\varepsilon })]-E[\varphi (X_{k\varepsilon
}^{\varepsilon })]\right\vert \leq C\varepsilon ,k=0,...,\left[ \frac{T}{%
\varepsilon }\right] ,\varepsilon <\varepsilon _{0}\text{.}
\end{equation*}

\bigskip On the other hand, one may encounter in applications the situation
that the coefficients $A:=-\nabla F$ and $\sigma $ in (\ref{SDE}) do not
meet the above smoothness requirements; see e.g. \cite{Ng}. For example it may happen that the vector
field $A$ is irregular, that is non-Lipschitz or discontinuous, while $%
\sigma $ is constant. From a practical and theoretical perspective, it would
be therefore important to extend those results to the case of
irregular vector fields $A$. However, in persuing such an objective one has
to cope with the possibility of the occurrence of an interesting effect in
connection with the small noise perturbed ODE (\ref{SDE}), namely the \emph{%
stochasticity} of a solution $X_{\cdot }$ to (\ref{SDE}) for $\varepsilon =0$
as the limit of $X_{\cdot }^{\varepsilon }$ for $\varepsilon \searrow 0$ in
distribution. To explain this phenomenon, consider the ODE 
\begin{equation}
X_{t}=x+\int_{0}^{t}A(X_{s})ds,x\in \mathbb{R}^{d},t\geq 0  \label{CP}
\end{equation}%
for $x\in \mathbb{R}^{d}$, where $A:\mathbb{R}^{d}\longrightarrow \mathbb{R}%
^{d}$ is a Borel measurable vector field.

If we assume that $A$ is Lipschitzian, it is well-known
that one can construct a global unique solution $X_{\cdot }\in C([0,\infty );%
\mathbb{R}^{d})$ to (\ref{CP}) by using Picard iteration.

On the other hand, if $A$ is not Lipschitzian, uniqueness or even existence
of solutions of the ODE (\ref{CP}) are not any longer guaranteed. An example
is the ODE (\ref{CP}) driven by the discontinuous vector field $A$ given by%
\begin{equation}
A(x)=sgn(x)  \label{sgn2}
\end{equation}%
for $X_{0}=0$, which possesses infinitely many solutions, where $%
X_{t}=+t,-t,t\geq 0$ are extremal solutions.

Other examples of non-well-posedness of the ODE (\ref{CP}) in the above
sense can be also observed in the case of continuous non-Lipschitz vector fields $A$
satisfying the growth condition $\left\langle A(x),x\right\rangle \leq
K(\left\vert x\right\vert ^{2}+1),x\in \mathbb{R}^{d}$ for a constant $K$.
Then, using e.g. Peano's theorem and the theorem of Arzel\`{a}-Ascoli one
finds that the set $C(x)$ of solutions $X_{\cdot }\in C([0,\infty );\mathbb{R%
}^{d})$ to (\ref{CP}) is non-empty, compact (in $C([0,\infty );\mathbb{R}%
^{d})$) and connected; see \cite{St}.

The case, when $C(x)$ is a singleton, that is the case of uniqueness of
solutions to (\ref{CP}) was e.g. examined \cite{DL}, \cite{A}, \cite{AG} by
means of the concept of renormalized solutions in connection with the
associated continuity equation.

\bigskip However, when $C(x)$ is not a singleton, which corresponds to the
situation of non-uniqueness, one may be faced with the problem of the
selection of the "right" or "most appropriate" solution to (\ref{CP}).

An important approach for studying such a selection problem based on so-called
Markov selections was developed by Krylov \cite{Kr}.

An alternative method to the latter one, which we aim at applying in this
paper, is based on the zero-noise selection principle, that is on the
selection of a solution $X_{\cdot }$ to (\ref{CP}) obtained as a limit in
law of solutions $X_{\cdot }^{\varepsilon }$ to a small noise perturbed ODE (%
\ref{CP}) of the form
\begin{equation}
X_{t}^{\varepsilon }=x+\int_{0}^{t}A(X_{s})ds+\varepsilon W_{t}
\label{SmallNoise}
\end{equation}%
for $\varepsilon \searrow 0$. We remark that in this case well-posedness of
the equation (\ref{CP}) is restored by adding a regularizing noise 
$\varepsilon W_{t}$, $\varepsilon >0$ to (\ref{CP}), since the SDE (\ref%
{SmallNoise}) has a unique strong solution or even a \emph{path-by-path}
unique solution in the space of continuous functions for each $\varepsilon >0
$, when e.g. $A$ is bounded and measurable (see \cite{Veretennikov} and \cite{Davie}).
The purpose of this method, which was originally proposed by A. N.\
Kolmogorov, is to select solutions to (\ref{CP}), which exhibit stability
under random perturbations.

When $Card(C(x))>1$, that is the case which is also referred to as Peano
phenomenon in the literature, Bafico \cite{Ba} and Bafico, Baldi \cite{BB}
were the first authors, who studied the zero-noise problem in the case of
one-dimensional time-homogeneous vector fields $A$. They could show, by
using estimates of mean exit times of $X_{\cdot}^{\varepsilon }$ with respect to small neighbourhoods of
isolated singular points of $A$, that $X_{\cdot }^{\varepsilon }$ weakly
converges to a process $X_{\cdot }$, whose law is given by a linear convex
combination of two Dirac measures. Other results in the one-dimensional
case, which rest on large deviation techniques and which cover the example (%
\ref{sgn2}), can be found in \cite{GHR}, \cite{H}. We also mention the paper \cite{BS}, where the authors use viscosity solutions of perturbed
backward Kolmogorov equations to study the small noise problem (\ref%
{SmallNoise}). As for local time techniques we refer to \cite{Tr}. Another
more recent paper, which deals with vector fields $A$ of the form $%
c_{+}\left\vert x\right\vert ^{\beta },x>0$ and $-c_{-}\left\vert
x\right\vert ^{\beta },x<0$ and self-similar driving noise $B_{\alpha }$
with self-similarity index $\alpha >0$, is \cite{PP_self_similar}. Using a new
qualitative technique, which doesn't rely on quantitative tools based e.g.
on Kolmogorov type equations, the authors characterize zero-noise limits by
analyzing solutions to a ``canonical equation", that is solutions to a
transformation of the small noise perturbed equation for $t\longrightarrow
\infty $. Based on the latter time-space rescaling technique the authors in 
\cite{PaP} extend the results in \cite{PP_self_similar} to the case of SDE's with multiplicative noise driven by a strictly $%
\alpha -$stable process and vector fields $A$, which are given by the following
rather large class of coefficients%
\begin{equation*}
A(x)=\left\{ 
\begin{array}{cc}
x^{\beta }L_{+}(x), & x>0 \\ 
-\left\vert x\right\vert ^{\beta }L_{-}(x), & x>0%
\end{array}%
\right. 
\end{equation*}%
for continuous functions $L_{\pm }:(0,\infty )\longrightarrow (0,\infty )$
with $L_{\pm }(x)\sim A_{\pm }l(\frac{1}{x})$ as $x\longrightarrow +0$,
where $l>0$ is a slowly varying function at infinity and $A_{+},A_{-}>0$ are
constants.   

Further, we also point out the works of \cite{AF}, \cite{Att}, which are
concerned with the analysis of small noise limits in the case of
one-dimensional linear transport equations. See also \cite{DLN}, \cite{Ma}
in the case of non-linear PDE's.

\bigskip 

Compared to the above results in the one-dimensional case, however, the
literature on small noise analysis for multi-dimensional vector fields $A$
is rather sparse.

Using viscosity solutions of Hamilton-Jacobi-Bellman equations, Zhang \cite%
{Zh} e.g. discusses a characterization of weak limits of $X_{\cdot
}^{\varepsilon }$ of (\ref{SmallNoise}), when $A:\mathbb{R}^{d}$ $%
\longrightarrow \mathbb{R}^{d}$ is continuous and bounded, and generalizes
results in \cite{BB} in certain cases.

\bigskip As for the case of discontinuous vector fields $A$ in the
multi-dimensional setting we mention the articles Delarue, Flandoli,
Vincenzi \cite{DFV} and  Buckdahn, Ouknine, Quincampoix \ \cite{BOQ}. Based
on a concept of solutions in the sense of Filippov, which differs from the
classical one in the case of discontinuous vector fields, the work \cite{BOQ}
is concerned with study of ODE's (\ref{CP}), when $A$ is merely measurable.
In \cite{DFV} the authors derive probability estimates of exit times with
respect zero-noise limits of certain ODE's in $\mathbb{R}^{4}$ and apply
those to the analysis of small noise perturbations of the Vlasov-Poisson
equation. Another result in this direction, whose proof compared to \cite{BB}
does not presume knowledge of the explicit distribution of $X_{\cdot
}^{\varepsilon },$ is the paper \cite{PP1}. Here the
authors examine small noise limits of time-inhomogeneous vector fields $A$,
which allow for discontinuity points in a halfplane. In \cite{KuP} Pilipenko, Kulik further
 develop the methods of \cite{PP_self_similar, PP1}
and apply new ideas based on an averaging principle, which is closely related to that
for Markov processes, to describe zero-noise limits, when the Wiener noise
driven SDE admits for multiplicative noise and vector fields $A$, that are
locally Lipschitz continuous outside a fixed hyperplane $H$ and have H%
\"{o}lder asymptotics at $H$.   

Finally, we point out
the recent work \cite{DM}. Here the authors establish results on the limiting behaviour of $%
X_{\cdot }^{\varepsilon }$ in (\ref{SmallNoise}) for vector fields $A=\nabla
V$, which are not assumed to be Lipschitz continuous in the neighbourhood of
the origin and where the potential $V$ is $C^{1,1}$ on compact subsets in $%
\mathbb{R}^{d}\backslash \{0\}$ given by functions of the form%
\begin{equation*}
V(x)=g(x)\left\vert x\right\vert ^{1+\alpha }+h(x)\left\vert x\right\vert
^{1+\beta },x\neq 0,V(0)=0
\end{equation*}%
for functions $g,h$ such that $\left\{ g\neq 0\right\} \cap \left\{
h\neq 0\right\} =\emptyset $. There the analysis of zero-noise limits, which
requires rather complicated conditions on $V$, rests on probability
estimates with respect to the polynomial growth of $V(X_{t}^{\varepsilon })$
for $\varepsilon \searrow 0$ and the submartingale dynamics of $%
V(X_{t}^{\varepsilon }),t\geq 0$. 

\subsection{Goals of the paper}

The main objective of our paper is characterization of limits of $X_{\cdot
}^{\varepsilon }$ of small $\alpha -$stable noise $B_{\alpha }$ perturbed
ODE's for $\alpha \in (1,2]$, when $A$ is locally Lipschitz continuous on $%
\mathbb{R}^{d}\backslash \{0\}$ and has the asymptotics $A(x)\sim \overline{a%
}(\frac{x}{\left\vert x\right\vert })\left\vert x\right\vert ^{\beta -1}x$
for $x\longrightarrow 0$, where $\overline{a}$ is a positive continuous
function on the unit sphere and $\beta<1$. 

Let us  set  the problem more precisely.

%T o be more precise, we want to show the following:
%\newline
Let $\{B_\alpha(t), \ t\geq 0\}$ be a $d$-dimensional symmetric L\'evy $\alpha$-stable
process with $\alpha\in(1,2],$ i.e.,
$\{B_\alpha(t), \ t\geq 0\}$ is a c\`adl\`ag process with independent and homogeneous increments
such that the characteristic function of the increment equals
$$
\E \exp\{ i (z,B_\alpha(t))\}=\exp\{-c|z|^\alpha t\}, \ z\in \mbR^d, \ t\geq 0,
$$
where $c>0$ is a constant.
For example, if $\alpha=2,$ then $\{B_\alpha(t), \ t\geq 0\}$ is a Brownian motion.

Let $A:\mbR^d\to \mbR^d$. Consider an ODE
\bel{eq:unperturbed}
d X_0(t) = A(X_0(t)) dt, \ t\geq 0,\ \ \
\ee
\bel{eq:small_stable_lim1}
X_0(0)=0
\ee
and a sequence of its perturbations by a small noise
\bel{eq:perturbed}
d X_\ve(t) = A(X_\ve(t)) dt + \ve d B_\alpha(t), \ t\geq 0,
\ee
\bel{eq:perturbed_ini}
X_\ve(0)=0.
\ee

The problem is to study existence and identification of  a limit $\lim_{\ve\to 0}X_\ve$. 

If the function $A$ satisfies the Lipschitz condition, then it is easy to show the convergence
$X_\ve\to X_0, \ \ve\to0$ for a.a. $\omega$ uniformly on the compact sets. However, 
%The problem of investigating the limit $\lim_{\ve\to 0}X_\ve$ is
the problem is much harder if there are multiple solutions to \eqref{eq:unperturbed}.
 This may happen, as mentioned before, if e.g. the Lipschitz property for $A$ fails.

We will use the following notations
$$
r=|x|\geq 0, \ \vf=\frac{x}{|x|}\in S^{d-1},\ x^\beta:=|x|^{\beta-1}x=r^\beta \vf,\ \  x\neq 0.
$$
  If $x=0,$ then the corresponding $\vf$ may be arbitrary; by definition we set $0^\beta:=0$
for arbitrary $\beta.$

A problem solved in this paper is to identify a limit of $\{X_\ve\}$ as $\ve\to0$ if 
\be\label{eq:A_asympt}
A(x)\sim \bar a(\vf)x^{\beta},\ \ x\to0,
\ee
  where $\beta<1$, 
$\bar a: S^{d-1}\to (0,\infty)$
 is a positive continuous function on the sphere, and $A$ is locally Lipschitz function 
 in $ \mbR^d\setminus \{0\}.$

If $\alpha+\beta >1$ and  $\alpha \in (1,2)$, then the existence of a  weak solution  follows from \cite{Portenko1995, Bogdan},
 uniqueness was  proved in \cite{ChenWang}.
 Case $\alpha=2$ means that the noise is a Brownian motion, existence of uniqueness and uniqueness for the case $\beta>-1$ is well known.
 If $d=1$ and  $\alpha+\beta<1 $, then  uniqueness  fails, see the
arguments of  \cite[Theorem 3.2]{TanakaTsuchiya74}. 
%\red{They considered $\alpha<1$ but I can't understand where
%they used this assumption. It seems to me that their arguments work for any $\alpha$. Actually
% we need $\alpha>1.$ }
%(multidimensional counterexample can be constructed similarly).
%Since we will assume that  $\alpha\in(1,2], \beta\in(0,1),$ existence and uniqueness  is satisfied.

It can be easily seen that any limit point $X_0$ of 
$\{X_\ve\}$ (if it exists)
 satisfies ODE \eqref{eq:unperturbed} outside of 0. Since $\bar a(x)>0$, equation
 \eqref{eq:A_asympt} yields that
 $X_0$ does not return to 0 after the exit from 0. Hence, to  identify  $X_0$ we have 
 to study two problems:  find the distribution of time spent at 0 and  characterize ``direction
 of exit'' from 0.

We will show that $X_0$ spends zero time at 0 and the distribution of 
``direction of exit'' coincides with  
 the distribution  of ``direction of exit'' of the limit for the sequence of the following model equations 
\be\label{eq_Levy_eps}
d \bar X_\ve(t) = \bar a(\bar \vf_\ve(t)) \bar X^\beta_\ve(t) dt + \ve d B_\alpha(t), \ t\geq 0,
\ee
$$
\bar X_\ve(0)=0,
$$
where $\bar \vf_\ve(t):=\frac{\bar X_\ve(t)}{|\bar X_\ve(t)|}$. This is quite natural because a selection   of exit should be done in a neighborhood of 0;
 coefficients of the initial equation \eqref{eq:perturbed} and the model equation \eqref{eq_Levy_eps}
 are equivalent there.

Moreover, we will show that the exit distribution equals the distribution of
the limit as $t\to\infty$ of  
angle $ \frac{\bar X_\ve(t)}{|\bar X_\ve(t)|}$  for the model equation.
 It will also be shown that this limit exists a.s. and its distribution is independent of $\ve>0.$

To understand what happens with the limit of \eqref{eq:perturbed}, let us firstly explain what happens
with 
the limit of \eqref{eq_Levy_eps}.
 
The formal limit equation for
\eqref{eq_Levy_eps} is
\be\label{eq_Levy_limit}
\begin{cases}
d \bar X_0(t) = \bar  a(\bar \vf_0(t)) \bar X_0^\beta(t) dt, \ t\geq 0,\\
\bar X_0(0)=0.
\end{cases}
\ee
Any solution to \eqref{eq_Levy_limit} is of the form:
\be\label{eq_sol_ODE1}
 \bar X_0(t)= \bar X_0(t; t_0, \vf):=\begin{cases}
  \big(\bar a(\vf )(1-\beta) (t-t_0)\big)^{\frac{1}{1-\beta}}\vf ,&  t\geq t_0,\\
 0, & t\in [0,t_0],
\end{cases}
\ee
where $\vf\in S^{d-1}, t_0\in[0,\infty]$ are constants.

That is,
\be\label{eq_sol_ODE}
\bar \vf_0(t)\equiv \vf, \ \ \ \mbox{ and }\ \ \ \bar r_0(t)=
  \begin{cases}
  \big(\bar a( \vf )(1-\beta) (t-t_0)\big)^{\frac{1}{1-\beta}} ,&  t\geq t_0,\\
 0, & t\in [0,t_0].
\end{cases}
\ee
 
It can be seen from the change of variables formula and the self-similarity of
 $B_\alpha$ that
\bel{eq:transform_model}
\{\bar X_\ve(t), \ t\geq 0\} \overset{d}=
\{  \ve^{\frac{\alpha}{ \alpha+\beta-1}}\bar X_1(\ve^{\frac{\alpha(\beta-1)}{ \alpha+\beta-1}}t),
\ t\geq 0\}.
\ee
Hence the distribution of the limit angle  
 $  \bar \Phi_\ve(t)=  \frac{\bar X_{\ve}(t)}{|\bar X_{\ve}(t)|}$ as $\ve\to0$
   should coincide  with
the distribution of  
\bel{eq:236}
\bar \Phi(+\infty):= \lim_{t\to\infty}\bar \Phi_{\ve_0}(t)
\ee
 for any fixed $\ve_0>0.$
A.s. existence of the corresponding limit is given in \S\ref{section:smallNoise},
see details in \S\ref{subsect:proof of Th4_2}.

As we mentioned above, we will show that any limit process spends zero time at 0. Hence 
\[
\bar X_\ve(\cdot)
\Rightarrow \bar X_0(\cdot; 0, \bar \Phi(+\infty)),\ \ve\to0.
\]

 The paper is organized as follows.
In \S \ref{section:large_time} we study  a  general problem of finding the asymptotic
behavior as $t\to\infty$ of a solution to the SDE
\bel{eq:1.1.3}
dX(t) =A(X(t))dt + \ general \ noise.
\ee
This question is interesting by itself, see for example \cite{GS, GO, KKR, BKT, PP_self_similar, Yuskovych}.
We give sufficient conditions that ensure existence of a.s. limit of the angle
 $\Phi(\infty):=\lim_{t\to\infty}\frac{X(t)}{|X(t)|}$ if $A(x)\sim \bar a(\vf)x^\beta, \ |x|\to\infty$.
In particular, our result  will imply existence of the limit in \eqref{eq:236}.
It will also be shown that the absolute value $R(t)=|X(t)|$ is equivalent 
to $\big(\bar a( \Phi(\infty) )(1-\beta) t\big)^{\frac{1}{1-\beta}} 
$, 
 compare with formula \eqref{eq_sol_ODE1}.
 
The main result of the paper is formulated and proved
 in section \ref{section:smallNoise}. It states that under some natural assumptions
we have the following convergence:
%we formulate and prove a result on convergence
\bel{eq:result}
X_\ve(\cdot)\Rightarrow X_0(\cdot, 
\bar \Phi(+\infty)),\ \ \ve\to0,
\ee 
where $\bar \Phi(+\infty)
$ is defined for the model equation in \eqref{eq:236} and $X_0(t,\vf)$ is a solution of the limit equation 
\eqref{eq:unperturbed} such that $X_0(t,\vf)\neq0, t>0$ and
 $\lim_{t\to0+}\frac{X_0(t,\vf)}{|X_0(t,\vf)|}=\vf.$
 
Proofs of auxiliary results are given in the Appendix. We also give there 
sufficient conditions that ensure existence of a family of solutions $X_0(t,\vf)$
to the limit equation \eqref{eq:unperturbed} such that $\lim_{t\to0+}\frac{X_0(t,\vf)}{|X_0(t,\vf)|}=\vf.$

\begin{remk}
    The study of the distribution of the limit angle  angle   $\bar \Phi(+\infty)$ is an interesting question. We conjecture that under general assumptions about the function $\bar a$ it is neither uniform nor discrete.  Let us explain (non-rigorous) arguments in favour of our hypothesis in the two-dimensional case when the noise is a Brownian motion ($\alpha$=2). Recall that the distribution of $\bar \Phi(+\infty)$ is independent of $\ve,$ so let's take $\ve=1.$  In this case, after applying It\^o's formula,  identifying $\phi$ with $(\cos \phi, \sin \phi)$, and standard arguments we obtain the following equation for absolute value and polar angle of $\bar X_1:$
\[
 d\bar R(t)=\left(\bar a(\bar \Phi(t)) \bar R(t)^\beta + \frac{1}{2\bar R(t) }\right)dt+ dw_1(t),
\]
\[
d\bar\Phi(t) =\frac{dw_2(t)}{\bar R(t)},
\]
where $w_1, w_2$ are two independent one-dimensional Wiener processes. 

So, for any $t_0>0$ and any $t\geq t_0:$
\[
\bar \Phi(t)=\bar \Phi(t_0)+\int_{t_0}^t\frac{dw_2(s)}{\bar R(s)}=\Phi(t_0)+ w\left(\int_{t_0}^{t }  {\bar R^{-2}(s)} {ds}\right),\]
where $w$ is a Wiener process. Hence, $\bar \Phi(+\infty) =\Phi(t_0)+ w\left(\int_{t_0}^{\infty } {\bar R^{-2}(s)} {ds}\right)$. Recall that under our assumptions, the integral $\int_{t_0}^{\infty } {\bar R^{-2}(s)} {ds}$ converges with probability 1. Note that the larger the function $\bar a$ is, the faster $\bar R$ grows, and the correspondingly slower the angle $\bar \Phi$ oscillates. Therefore, it is more likely that the limiting value of the angle will be in regions where the function $\bar a$ is larger. 

  Note also that if we take 
 $X_\ve(0)=x\ve^{\frac{\alpha}{ \alpha+\beta-1}} $ instead of $X_\ve(0)=0,$
 then the distribution of the limit angle may be different, see equality \eqref{eq:transform_model} for the model equation. 
\end{remk}

\section{An asymptotic behavior of a solution to an SDE for  a large time}\label{section:large_time}

Let $X(t)$ be a solution to the following SDE in $\mbR^d$:
\be\label{eq:multidimSDE}
\ba
dX(t) =A(X(t))dt + B(X(t))dW(t) + \int_U C_1(X(t-),u)  N_1(du, dt)+\\
+\int_U C_2(X(t-),u)  \hat N_2(du, dt),
\ea
\ee
where $W$ is a multi-dimensional Wiener process, $N_1$ is a Poissonian point measure,
$\hat N_2$ is a compensated Poissonian point measure.

In this section we will always assume that any equation has a (weak) solution for any starting point, any integral is well-defined, etc.

Together with \eqref{eq:multidimSDE}, let us consider an ODE
\be\label{eq:ODE}
d\bar X(t)= \bar A(\bar X(t)) dt,
\ee
 where $\bar A$ is such that $\bar A(x)\sim A(x)$ as $|x|\to\infty$, i.e. $|\bar A(x)- A(x)|= o(|A(x)|)$ as $|x|\to\infty.$

The aim of this section is to find conditions that ensure equivalence
\[
X(t)\sim \bar X(t), \ t\to\infty, \ \ \mbox{a.s.}
\]
Note that the study of asymptotics for a solution to the ODE
 \eqref{eq:ODE} may be much simpler
 than   for the SDE \eqref{eq:multidimSDE}.

Naturally we should have some assumptions that guarantee that the rate of growth of the noise terms
in \eqref{eq:multidimSDE} is $o(|\bar X(t)|)$ as $t\to\infty $ a.s. We also have to discuss an initial condition or another
parametrization for $\bar X(t)$.

Consider an integral equation
\be\label{eq:integral_eq}
dZ(t) =A(Z(t)) dt +d\xi(t),
\ee
where $\xi$ is locally bounded, measurable  function, $\xi(0)=0.$

At first let us obtain  a result on the asymptotics for equation \eqref{eq:integral_eq} with non-random
 $\xi$ and then we apply the results to equation \eqref{eq:multidimSDE} with
\[
d\xi(t)=
B(X(t))dW(t) + \int_U C_1(X(t-),u)  N_1(du, dt)
+\int_U C_2(X(t-),u)  \hat N_2(du, dt).
\]
We will consider the case when $A$ has power asymptotics
at the infinity with an index that is less than 1. 

\begin{remk}
If the coefficients have a power growth with  an index that is less than 1, then solution will
have the power asymptotic. However if coefficients has a linear growth, then
the growth of a solution may be exponential.  This case demands another technique and we do not consider
this case in the paper.
\end{remk}
Recall notations
\[
r=|x|,\ \vf=\frac{x}{|x|},\ r(t)=r_X(t)=|X(t)|, \ \vf(t)=\vf_X(t)=\frac{X(t)}{|X(t)|}.
\]
We define $\vf$ arbitrary if denominator equals 0.

If we specify an initial condition  $X(0)=x, $ then we denote $X(t)$ by $X_x(t)$.

Represent the vector field $A$ as a sum of the radial and tangential components:
\bel{eq:A_representation}
A(x)=A_{rad}(x) +A_{tan}(x),
\ee
i.e.,
\[
A_{rad}(x) = \lg A(x), \vf\rg \vf, \ A_{tan}(x)=A(x)- \lg A(x), \vf\rg \vf.
\]
% $
% A_{rad}(x)=|A_{rad}(x)|\vf$ and $\lg A_{tan}(x), \vf\rg=0, \ x\neq 0.
% $

\begin{thm}\label{thm:integral_eq_growth}
Assume that
\begin{itemize}
\item
\bel{eq:A_r}
A_{rad}(x)=a(x)r^\beta\vf
\ee
and
\bel{eq:298} 
\forall \vf_0\in S^{d-1}\ \  \ \exists \lim\limits_{\substack{r\to+\infty\\ \vf \to\vf_0}} a(x)
=: \bar a(\vf_0),
\ee
where  $\beta<1$ and $\bar a: S^{d-1}\to \mbR$ is a positive and continuous function;
\item
\bel{eq:A_t_infty}
\exists \gamma>0\ \exists C_{tan}>0\ \exists R_{tan}>0 \
\forall  \ r>R_{tan}\ : \ \  \sup_{|y|= r} |A_{tan}(y)|  \leq C_{tan}r^{\beta-\gamma};
\ee
\item
$\exists   \delta>0\ \exists C_\xi>0\ \forall t\geq 0:$
\bel{eq:estim_noise} 
|\xi(t)| \leq C_\xi\left(1+ t^{\frac{1}{1-\beta}-\delta}\right).
\ee
%\red{or may be better $|\xi(t)|=O(t^{\frac{1}{1-\beta}-\delta})$ as $t\to\infty.$?}
\end{itemize}
Then there is $R_0=R_0(A, \delta, \beta, C_\xi)>0$ such that for any
 $x\in \mbR^d,\ |x|\geq R_0$ and any solution 
 $Z(t)=Z_x(t)$, $Z_x(0)=x$ of \eqref{eq:integral_eq}  we have:
\begin{itemize}
\item  $\lim_{t\to\infty}|Z_x(t)|=+\infty$;
\item  there exists a limit 
\[
\vf_{Z_x}(+\infty):=\lim_{t\to\infty}\vf_{Z_x}(t):=
\lim_{t\to\infty}\frac{Z_x(t)}{|Z_x(t)|};
\]
\item $Z_x(t) \sim \left( (1-\beta) \bar a(\vf_{Z_x}(+\infty)) t  \right)^{\frac{1}{1-\beta}}\vf_{Z_x}(+\infty)$ as $t\to\infty.$
\end{itemize}
\end{thm}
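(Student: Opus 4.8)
The plan is to separate the dynamics of the radial part $R(t) = |Z_x(t)|$ from the angular part $\vf_{Z_x}(t)$, and to control each using the structure $A = A_{rad} + A_{tan}$ together with the noise bound \eqref{eq:estim_noise}. The governing heuristic is that the radial component grows like the ODE solution $\big((1-\beta)\bar a\, t\big)^{1/(1-\beta)}$, that is, $R(t)^{1-\beta}$ grows linearly, while the angular drift $A_{tan}$ and the noise $\xi$ are subdominant and hence cannot prevent $\vf_{Z_x}(t)$ from settling to a limit.

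\medskip

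\textbf{Step 1: radial escape to infinity.} I would first derive a differential inequality for $R(t)$. Writing $Z = R\vf$ and using $\frac{d}{dt}R = \lg \dot Z, \vf\rg$, the radial drift contributes $\lg A_{rad}(Z),\vf\rg = a(Z)R^\beta$, which by \eqref{eq:298} is comparable to $\bar a(\vf) R^\beta \geq c\, R^\beta$ for a uniform constant $c > 0$ once $R$ is large (using positivity and continuity of $\bar a$ on the compact sphere). The tangential drift is orthogonal to $\vf$ and so contributes nothing to $\frac{d}{dt}R$ directly; its effect enters only through the noise-driven perturbation of the trajectory. The noise term, via \eqref{eq:estim_noise}, is of lower order than the leading radial scale $t^{1/(1-\beta)}$. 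The goal of this step is to show that for $|x| \geq R_0$ with $R_0$ chosen large (depending on $A$, $\delta$, $\beta$, $C_\xi$), the process cannot drop back toward the origin and in fact $R(t) \to +\infty$, with a two-sided comparison $R(t)^{1-\beta} \sim (1-\beta)\bar a(\vf_{Z_x}(+\infty))\, t$ once the angular limit is identified. The natural device is a Gronwall/comparison argument applied to $R(t)^{1-\beta}$, whose derivative is approximately $(1-\beta)\bar a(\vf)$ plus error terms that are summably small in $t$.

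\medskip

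\textbf{Step 2: convergence of the angle.} Next I would show that $\vf_{Z_x}(t)$ is Cauchy as $t \to \infty$. The angular velocity $\frac{d}{dt}\vf$ is driven by the tangential component $A_{tan}$ together with the projection of the noise onto the sphere, both divided by $R(t)$. Using the bound \eqref{eq:A_t_infty}, the tangential drift contributes an angular speed of order $R^{\beta-\gamma}/R \cdot R^{?}$; the key point is that after rescaling by $R$, the angular drift decays like $R^{-\gamma}$ relative to the radial timescale. Converting to the intrinsic time variable in which the radial growth is linear (equivalently, estimating $\int^\infty |\frac{d}{dt}\vf|\,dt$), one finds the total angular variation is finite because $R(t)$ grows polynomially fast enough that $\int^\infty R(t)^{-\gamma}\, \frac{dR}{R^\beta}$ converges — here the gap $\gamma>0$ in \eqref{eq:A_t_infty} is exactly what makes the tangential integral summable. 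The noise contribution to the angle is handled by the sub-leading bound \eqref{eq:estim_noise}, which guarantees its cumulative angular displacement is finite. Existence of $\vf_{Z_x}(+\infty)$ then follows from the Cauchy criterion on the compact sphere $S^{d-1}$.

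\medskip

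\textbf{Step 3: sharp radial asymptotics.} Finally, having fixed the limiting direction $\vf_{Z_x}(+\infty)$, I would return to the radial equation from Step 1 and upgrade the crude lower bound to the precise equivalence. Since $\vf_{Z_x}(t) \to \vf_{Z_x}(+\infty)$ and $\bar a$ is continuous, the radial drift coefficient $a(Z)$ converges to $\bar a(\vf_{Z_x}(+\infty))$ along the trajectory; integrating $\frac{d}{dt}R^{1-\beta} = (1-\beta)a(Z)(1 + o(1))$ and dividing by $t$ yields $R(t)^{1-\beta}/t \to (1-\beta)\bar a(\vf_{Z_x}(+\infty))$, which is the claimed asymptotics. \emph{The main obstacle} I anticipate is Step 2: establishing that the angle genuinely converges rather than merely oscillates with bounded speed. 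The difficulty is that a priori one does not yet know the sharp radial growth from Step 1 (only a lower bound), so the angular and radial estimates must be bootstrapped together — one needs the radial lower bound to make the tangential integral converge, but a clean radial asymptotic in turn presupposes the angular limit. Resolving this coupling, presumably by first extracting a rough polynomial lower bound $R(t) \gtrsim t^{1/(1-\beta)}$ sufficient to force angular convergence, and only afterward sharpening, is the technical heart of the argument.
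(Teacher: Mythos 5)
Your overall architecture coincides with the paper's: a differential inequality showing $R^{1-\beta}$ grows at least linearly, a bootstrap to break the circular dependence between the radial lower bound and the angular estimate, convergence of the angle via finiteness of $\int^\infty |\dot\vf|\,dt$ (with the exponent gap $\gamma$ in \eqref{eq:A_t_infty} and the sub-leading noise bound \eqref{eq:estim_noise} supplying integrability), and finally the sharp asymptotics by continuity of $\bar a$ along the converging angle. The paper executes exactly this plan.

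There is, however, one genuine gap in your Step 1 as written: you differentiate $R(t)=|Z(t)|$ via $\frac{d}{dt}R=\lg \dot Z,\vf\rg$, but $Z$ solves the integral equation \eqref{eq:integral_eq} where $\xi$ is only a locally bounded measurable function (in the application, a c\`adl\`ag stable path), so $\dot Z$ does not exist and $R(t)$ need not be absolutely continuous. The paper's remedy is to set $\tilde Z(t):=Z(t)-\xi(t)$, which satisfies the genuine ODE $d\tilde Z=A(\tilde Z+\xi)\,dt$; all calculus is done on $\tilde r=|\tilde Z|$ and $\tilde\vf=\tilde Z/|\tilde Z|$, and the conclusions are transferred back using $|Z-\tilde Z|=|\xi|=o(|\tilde Z|)$. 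This substitution has a side effect you would need to absorb: your claim that $A_{tan}$ ``contributes nothing to $\frac{d}{dt}R$ directly'' is no longer available, because $A_{tan}(Z(t))$ is orthogonal to $Z(t)/|Z(t)|$ but not to $\tilde Z(t)/|\tilde Z(t)|$, so a cross term $\lg\tilde Z(t),A_{tan}(Z(t))\rg$ enters the radial inequality and must be controlled by \eqref{eq:A_t_infty}. Indeed the paper gives an explicit example (Remark 3) showing that without \eqref{eq:A_t_infty} the conclusion $|Z(t)|\to\infty$ can fail even with $|\xi|\leq 1$, so the tangential bound is genuinely needed already at the radial-escape stage, not only for the angular convergence. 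With the $\tilde Z$ device inserted and the tangential cross term carried through, your three steps become the paper's proof.
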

We shall postpone the proof of the theorem to the Appendix, and now make a few remarks on the assumptions of the theorem.
\begin{remk}
We don't assume uniqueness of a solution. The limit angle $\vf_{Z_x}(+\infty)$
may depend on a choice of a solution $Z_x(t)$.
\end{remk}
 \begin{remk}
Recall that the function
\bel{eq:funcY}
\bar X(t)=\bar X(t,\vf):=\left( (1-\beta) \bar a(\vf) t  \right)^{\frac{1}{1-\beta}}\vf
\ee
 is a solution to
\eqref{eq_Levy_limit}.
% the ODE
% \bel{eq:ODElimit}
% d\bar X(t)=\bar a(\vf) \bar X^\beta(t) dt, t\geq 0,
% \ee
% \[
% \bar X(0)=0.
% \]
%Equation \eqref{eq:ODElimit} has many solutions if $\bar X(0)=0$ and a unique solution if $\bar X(0)\neq0.$ The function $\bar X(t,\vf)$ selects a direction of exit from 0.
The statement of   Theorem
\ref{thm:integral_eq_growth}  means that
 $Z_x(t)\sim \bar X(t,\vf_{Z_x}(+\infty))$ as $t\to\infty$.
Note that \eqref{eq:estim_noise} yields $ |\xi(t)|=o(|\bar X(t)|), \ t\to\infty,
 $ 
 i.e., the perturbation is negligible w.r.t. the solution of \eqref{eq_Levy_limit}. If the noise is large in the sense:
 \[
\limsup_{t\to\infty}\frac{|\xi(t)|}{t^{\frac{1}{1-\beta}+\delta}}>0
\]
for some $\delta>0,$ then there are no reasons to expect equivalence $Z_x(t)\sim \bar X(t,\vf)$ as $t\to\infty$ for any $x$ and $\vf.$

Existence of $\delta>0$ such that $|\xi(t)| \leq C_\xi\left(1+ t^{\frac{1}{1-\beta}-\delta}\right) $
is equivalent to existence of $\alpha>0$ such that $\alpha+\beta>1$ and 
\bel{eq:698}
|\xi(t)| \leq C_\xi\left(1+ t^{\frac{1}{\alpha}}\right).
\ee
The condition $\alpha+\beta>1$ appears often in theorems on existence and uniqueness of solutions for SDEs with $\alpha$-stable noise. Moreover this noise satisfies  condition \eqref{eq:698}, see Example \ref{expl:stable}.\end{remk}
\begin{remk}
Unless we assume that the tangential component is negligibly small with respect to the radial component, the angular part may not stabilise as $t\to\infty$.
 Indeed, consider two-dimensional case,
where 
\[
\xi(t)=0, t\geq 0,\ \ \ \ A_{rad}(x)=x^\beta= (r^{\beta-1}x_1, r^{\beta-1}x_2) \  \mbox{and} \ A_{tan}(x)=  (-r^{\beta-1} x_2, r^{\beta-1} x_1).\]
Then $d r(t)=r^\beta(t) dt$ and 
\[
r(t)=\big(r(0)^{1-\beta} +{(1-\beta)}t\big)^{\frac{1}{ 1-\beta}}. %\sim \big((1-\beta)t\big)^{\frac{1}{ 1-\beta}},\ t\to\infty.
\]
If we identify the angular part $ (\frac{x_1}{|x|}, \frac{x_2}{|x|})=:(\cos \phi, \sin \phi)$ with the polar angle $\phi\in \mbR$, then 
\[  \phi'(t)=r^{\beta-1}(t) = \big(r(0)^{1-\beta} +{(1-\beta)}t\big)^{-1}, t\geq 0.\]
So, $\phi(t)\sim (1-\beta)^{-1} \log t, \ t\to\infty.$

 %\red{I have an  example that the Theorem is not true if we delete assumption \eqref{eq:A_t_infty}. In this example $\xi$ is bounded.}
The next example shows that if the tangential part is large enough, then assumptions \eqref{eq:estim_noise},  \eqref{eq:298}, and  \eqref{eq:A_r}  (without \eqref{eq:A_t_infty}) don't even
 imply convergence $\lim_{t\to\infty}|Z_{x_0}(t)|=+\infty$ for some   $x_0$ that may be selected arbitrary large.
Consider the case $d=2$ and assume that $A_{rad}(x)=x^\beta= r^\beta \vf, |x|\geq R_{rad}$. 
  For any starting point $x=(n,0), n\in\mbN,$ we shall provide a coefficient $A_{tan}$ and   the noise term  $\xi$ such that $|\xi(t)|\leq 1, t\geq0$ and the function $Z_{x_0}$ 
is bounded.

Let $n\in\mbN$ be   fixed, $r(t)=r_n(t)$ be the solution to the ODE
\[
\frac{dr(t)}{dt}= r^\beta(t),\ \ r(0)=n,
\]
i.e., $r(t) =\left(n^{1-\beta}+(1-\beta)t\right)^{\frac{1}{1-\beta}}.$

Denote $\sigma:=\frac{(n+1)^{1-\beta}-n^{1-\beta}}{1-\beta}.$ Then $r(\sigma)=n+1.$

{Let $\wt Z_{(u,v)}(t), u,v\in \mbR, t\geq 0$
be a solution to the ODE
\[
d \wt Z_{(u,v)}(t)= A(\wt Z_{(u,v)}(t)) dt,\ \ \wt Z_{(u,v)}(0)=(u,v).
\]
It is possible to select  $A_{tan}$ such that $\wt Z_{(n,0)}(\sigma)= (-n-1,0)$ and $\wt Z_{(-n,0)}(\sigma)= (n+1,0) $ for all $n\in\mbN.$
Let $Z(0)=(n,0) $, %$\xi(t) =(0,0), t\in[0,\sigma).$ If $|x_0|=n$, then $|Z_{x_0}(t)|= r(t), t\in[0,\sigma), $ and in particular $|Z_{x_0}(\sigma-)|=n+1.$ It is possible to select  $A_{rad}$ such that $Z_{(n,0)}(\sigma-)= (-n-1,0)$ and $Z_{(-n,0)}(\sigma-)= (n+1,0).$
\[
\xi(t):=
\begin{cases}
(0,0),\ t\in[2k\sigma, (2k+1) \sigma);\\
(1,0), \ t\in [(2k+1)\sigma, (2k+2) \sigma).
\end{cases}
\]
Then 
\[
Z(t):=
\begin{cases}
\wt Z_{(n,0)}(t-2k\sigma),\ t\in[2k\sigma, (2k+1) \sigma);\\
\wt Z_{(-n,0)}(t-(2k+1)\sigma), \ t\in [(2k+1)\sigma, (2k+2) \sigma).
\end{cases}
\]
%$Z(2k\sigma)=(n,0),  Z((2k+1)\sigma)=(-n,0)  $
 for any $k\geq 0. $ Hence the function $Z_{(n,0)}(t), t\geq 0$ is bounded.
 }
\end{remk}

\begin{remk}
Notice that in Theorem \ref{thm:integral_eq_growth} we assume some asymptotics for coefficients
at infinity but for equation 
\eqref{eq:perturbed} we  assume similar asymptotics at zero, see  \eqref{eq:A_asympt}. 
We will see that   the limit as $\ve\to0$ of  time-space transformation    $
\tilde X_\ve(t):=\ve^{\frac{-\alpha}{  \alpha+\beta-1}} 
  X_\ve (\ve^{\frac{\alpha(1-\beta)}{ \alpha+\beta-1}}t),
 $
  c.f.
\eqref{eq:transform_model}, equals a solution to the model equation  \eqref{eq_Levy_eps} with $\ve=1$.
This is the reason that the study of
the limit angle
 at 0 for equation \eqref{eq:perturbed}  with the small noise reduces to  the
 study of the limit angle of the model equation 
 for large $t.$ 
\end{remk}

The main result of this section is the following.

\begin{thm}\label{thm:SDE_growth}
Suppose that for any initial condition $x\in\mbR^d$
there is a unique solution to \eqref{eq:multidimSDE}, which is a strong Markov process.

Assume that \eqref{eq:A_r},  \eqref{eq:A_t_infty} are satisfied, and
\bel{eq:X_infty}
\forall x\ \ \  \ \Pb_x\left( \sup_{t\geq 0}|X(t)|=+\infty\right) =1,
\ee
\bel{eq:cond_noise}
\exists \delta>0\ \forall \ve>0 \ \exists c=c_{\delta, \ve} \ \forall x \ \ \
 \Pb_x\left( |\xi_X(t)|\leq c(1+ t^{\frac{1}{1-\beta}-\delta}), \ t\geq 0\right) \geq 1-\ve,
\ee
where
\[
\xi_X(t)=
\int_0^tB(X(s))dW(s) + \int_0^t\int_U C_1(X(s-),u)  N_1(du, ds)
+\int_0^t\int_U C_2(X(s-),u)  \hat N_2(du, ds).
\]

Then   for any initial condition $X(0)$ we have
\begin{itemize}
\item
\bel{eq:unboundX}
\lim_{t\to\infty}|X(t)|=+\infty\ \ \mbox{a.s.};
\ee
\item  there exists a (random) limit
\[
\vf_{X}(+\infty):=\lim_{t\to\infty}\vf_{X}(t)=
\lim_{t\to\infty}\frac{X(t)}{|X(t)|} \ \ \ \mbox{a.s.};
\]
\item $X(t) \sim \bar X(t, \vf_{X}(+\infty))$
 %$\left( (1-\beta) \bar a(\vf_{X}(+\infty)) t  \right)^{\frac{1}{1-\beta}}\vf_{X}(+\infty)$
 as $t\to\infty$ a.s.,
 where $\bar X$ is defined in \eqref{eq:funcY}.
\end{itemize}
\end{thm}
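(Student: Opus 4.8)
The plan is to reduce this stochastic statement to the purely deterministic Theorem~\ref{thm:integral_eq_growth} by a path\-wise (conditional) argument, using the strong Markov property to manufacture both a large starting radius and a noise bound with a \emph{deterministic} constant. Concretely, I would first fix $\ve>0$ and read off from \eqref{eq:cond_noise} the deterministic constant $c=c_{\delta,\ve}$. Feeding $C_\xi:=c$ into Theorem~\ref{thm:integral_eq_growth} produces a deterministic threshold $R_0=R_0(A,\delta,\beta,c)$. Assumption \eqref{eq:X_infty} then guarantees that the stopping time $\tau:=\inf\{t\geq0:\ |X(t)|\geq R_0\}$ is almost surely finite, so the process does reach the regime where the deterministic theorem applies.

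Next I would restart at $\tau$. By the strong Markov property the shifted process $\tilde X(s):=X(\tau+s)$ is again a solution of \eqref{eq:multidimSDE}, now started from $X(\tau)$ with $|X(\tau)|\geq R_0$, and its accumulated noise is $\tilde\xi(s):=\xi_X(\tau+s)-\xi_X(\tau)=\xi_{\tilde X}(s)$, a cadlag, locally bounded path with $\tilde\xi(0)=0$. The essential use of the hypotheses here is that the constant $c$ in \eqref{eq:cond_noise} does not depend on the initial point; applying \eqref{eq:cond_noise} to the process restarted at $X(\tau)$ and conditioning on $\mathcal{F}_{\tau}$ therefore yields an event $\Omega_\ve$ with $\Pb(\Omega_\ve)\geq1-\ve$ on which $|\tilde\xi(s)|\leq c\big(1+s^{\frac{1}{1-\beta}-\delta}\big)$ for all $s\geq0$.

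On $\Omega_\ve$ I would then freeze $\omega$ and view the dynamics path\-wise: for each such $\omega$ the trajectory $\tilde X(\cdot,\omega)$ solves the integral equation \eqref{eq:integral_eq} driven by the now deterministic forcing $\xi=\tilde\xi(\cdot,\omega)$, which satisfies \eqref{eq:estim_noise} with $C_\xi=c$, and which starts from a point of modulus at least $R_0$. Hence all three conclusions of Theorem~\ref{thm:integral_eq_growth} apply to $\tilde X$: one gets $|\tilde X(s)|\to+\infty$, existence of the limit angle $\vf_{\tilde X}(+\infty)$, and $\tilde X(s)\sim\bar X(s,\vf_{\tilde X}(+\infty))$ with $\bar X$ from \eqref{eq:funcY}. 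Undoing the time shift via $t=\tau+s$, and using $\tau<\infty$ together with $t-\tau\sim t$, these translate into \eqref{eq:unboundX}, the existence of $\vf_X(+\infty)$, and $X(t)\sim\bar X(t,\vf_X(+\infty))$. Since these three assertions constitute a single tail event $E$ unaffected by the finite shift $\tau$, and since $\Omega_\ve\subseteq E$, we obtain $\Pb(E)\geq\Pb(\Omega_\ve)\geq1-\ve$ for every $\ve>0$, whence $\Pb(E)=1$.

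The main obstacle is the feedback structure: $\xi_X$ is not an externally prescribed forcing, since the coefficients $B,C_1,C_2$ depend on $X$ itself, so Theorem~\ref{thm:integral_eq_growth} cannot be invoked directly on \eqref{eq:multidimSDE}. The resolution is precisely the path\-wise reading above, which for each fixed $\omega$ turns $\tilde\xi$ into a fixed function of time and lets the deterministic theorem act on a genuine trajectory of \eqref{eq:integral_eq}. The one point that must be arranged with care is that the threshold $R_0$ be deterministic, and this is exactly what forces the combination of the uniform-in-$x$ bound \eqref{eq:cond_noise} with a strong Markov restart at the first entrance time into $\{|x|\geq R_0\}$, rather than a naive application from time $0$.
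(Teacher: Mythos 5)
Your proposal is correct and follows essentially the same route as the paper: fix $\ve$, extract the uniform-in-$x$ constant $c$ from \eqref{eq:cond_noise}, let $R_0$ be the deterministic threshold from Theorem~\ref{thm:integral_eq_growth} with $C_\xi=c$, use \eqref{eq:X_infty} to reach $\{|x|\geq R_0\}$ in finite time, restart there by the strong Markov property, and apply the deterministic theorem pathwise on an event of probability at least $1-\ve$. The paper's proof is exactly this (written as the disintegration \eqref{eq:407} over the law of $X(\tau_{R_0})$, with the remaining two conclusions dismissed as ``similar''), and your explicit handling of the time shift via $t-\tau\sim t$ and of all three conclusions at once is a faithful, slightly more detailed rendering of the same argument.
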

\begin{proof}
% It follows from Theorem \ref{thm:integral_eq_growth} that it is sufficient to verify that
% \bel{eq:growth_infty}
% \lim_{t\to\infty}|X(t)|=+\infty\ \ \mbox{a.s.}
% \ee
Set
\[
  \tau_{R}:=\inf\{ t\geq 0 \ : \ |X(t)| \geq R\}.
\]
 It follows from the assumptions of the Theorem that $\tau_R<\infty$ a.s. for any $R>0 $ and any
 initial starting point $X(0)$.

 Let $\ve>0$ be arbitrary, constants $\delta>0$ and $c=c_{\delta,\ve}$  be from
 \eqref{eq:cond_noise}, $R_0=R_0(\delta, c, A)$ is selected from Theorem \ref{thm:integral_eq_growth}.

 Theorem \ref{thm:integral_eq_growth} and the strong Markov property yield
 \bel{eq:407}
 \ba
 \Pb(\lim_{t\to\infty}|X(t)|=+\infty)\geq \Pb\left(\sup_{t\geq \tau_{R_0}} |\xi_X(t)-\xi_X(\tau_{R_0})|
\leq c\left(1+ (t- \tau_{R_0})^{\frac{1}{1-\beta}-\delta}\right)\right)=
  \\
 \int_{|x|\geq R_0}
 \Pb_x\left(\sup_{s\geq 0} |\xi_X(s)|\leq c(1+ s^{\frac{1}{1-\beta}-\delta}) \right) \Pb_{X(\tau_{R_0})}(dx)\geq 1-\ve.
 \ea
 \ee
 Since $\ve>0$ is arbitrary, the last inequality implies
\[
\lim_{t\to\infty}|X(t)|=+\infty\ \ \mbox{a.s.}
\]
Proofs of  all other items are similar.
\end{proof}
\begin{remk}
We need the strong Markov property only for the  justification of the equality in \eqref{eq:407}. Generally,
we may replace assumption on Markovianity, \eqref{eq:X_infty} and \eqref{eq:cond_noise}  with
\[
\ba
 & \exists \delta>0, c>0 \ \forall R>0 :\\
& \Pb\left(\exists t_0 \forall  t\geq 0\ \  |\xi_X(t_0+t)-\xi_X(t_0)|\leq c(1+ t^{\frac{1}{1-\beta}-\delta})\ \ \mbox{and}\ \ |X(t_0)|\geq R\right) =1.
\ea
\]
\end{remk} 

\begin{expl}\label{Expl:exit}
Condition \eqref{eq:X_infty}  is satisfied if, for example, all coefficients  are locally bounded
and  $B$ satisfies ellipticity condition or, for example, if the jump part is non-degenerate at infinity and $X$
can exit from any fixed ball by one large jump with positive probability that is independent of the initial point (but may depend on a ball):
\[
\forall R>0: \inf_{x}\Big(\int_{u: |C_1(x,u)|\geq  R}\nu_1(du)+  \int_{u: |C_2(x,u)|\geq  R}\nu_2(du)\Big)>0.
\]
Here $N_1, N_2$ are Poisson point measures with intensities  $dt\times \nu_1(du), dt\times \nu_2(du)$, respectively.
\end{expl}
Here are three examples where the apriori bound \eqref{eq:cond_noise} of noise growth is satisfied.
\begin{expl}\label{expl:stable}
Let $X(t), t\geq 0$ be a solution of the SDE
\[
dX(t) =A(X(t)) dt+ dB_\alpha(t), t\geq 0,
\]
where $\alpha>1$, $A$ is locally bounded and satisfies \eqref{eq:A_r},  \eqref{eq:A_t_infty}. Then all conditions
  of Theorem \ref{thm:SDE_growth} are satisfied. Indeed,  
existence and uniqueness of the solution follows from
\cite{Portenko1995, Bogdan} and \cite{ChenWang} if  $ \alpha+\beta>1$.
Condition  \eqref{eq:X_infty}   is fulfilled since $B_\alpha$ has any large jumps a.s.  Condition  \eqref{eq:cond_noise} of noise growth  
follows from 
\bel{eq:Khintchine}
\forall \alpha'<\alpha\ : \  \ \ \lim_{t\to\infty}\frac{B_\alpha(t)}{t^{\frac1{\alpha'}}}=0 \ \ \mbox{a.s.},
\ee
see \cite{Khintchine}.
\end{expl}
\begin{expl}
Let $X(t)$ be a solution to \eqref{eq:multidimSDE}, where   $C_1=C_2\equiv 0$ and $B$ is bounded.
Observe that
\[
\sum_j\int_0^t B_{kj}(X(s)) dW_j(s) = \tilde W_k\left(
\sum_j B^2_{kj}(X(s)) ds,
\right)
\]
where $\tilde W_k$ is a Brownian motion.

 Hence $|\xi_X(t)|$ is dominated by
$\sup_{s\in[0,t]}(\sum_k|\tilde W_k(cs)|^2)^{1/2}$, where 
\[
c=\sup_x\sum_{k,j}|B_{k,j}(x)|^2=\const.
\]

It is well known that
\[
\forall \gamma>1/2:\ \ \lim_{t\to\infty}\frac{\tilde W_k(t)}{t^\gamma}=0\ \ \ \mbox{a.s.}
\]
So, condition \eqref{eq:cond_noise} is satisfied if  $\beta>-1.$

It should be noted that the case $\beta=-1$ is really critical. Indeed, if $d=1$, then the Bessel process
\[
d\zeta(t)=\frac{a}{\zeta(t)} dt +dW(t)
\]
does not converge to $\infty$ as $t\to\infty$ for $a\leq 1/2.$
\end{expl}
\begin{expl}
Assume that the noise term is of the form
\[
\xi_X(t)= \int_0^t C(X(s-)) d \tilde Z(s),
\]
where $\tilde Z$ is a zero mean L\'evy process without a Gaussian component.
Assume that its jump measure $\nu$
satisfies conditions
\[
\int_{|z|> x}\nu(d z) \leq \frac{K}{x^\alpha}, \quad x\geq 1,
\]
and
\[
\int_{|z|\leq 1} z^2\nu(d z) \leq K
\]
for some   $K>0$ and $ \alpha\in (1,2)$.

It can be proved, see \cite[Lemma 3.1]{PaP}  that for any $\ve>0$ and $\alpha'<\alpha$ there exists a 
generic constant $C=C(K,\alpha, \alpha',\theta)$
such that for any predictable process $\{\sigma(t)\}_{t\geq 0}$, $|\sigma(t)|\leq 1$ a.s., we have
\[
\Pb \Big(|\int_0^t  \sigma(s)\, d \tilde Z(s)|\leq C (1+t^{\frac{1}{\alpha'}}),\ t\geq 0\Big)
\geq 1-\ve.
\]
Hence, condition   \eqref{eq:cond_noise} is satisfied if  $\alpha+\beta>1,$ cf. with Example \ref{expl:stable}. 
%\red{It would be nice to add an example that ensures \eqref{eq:cond_noise} with nontrivial $C_1, C_2$.
%Probably this example can be found in my paper with Pavlyukevich.}
\end{expl}

% \red{Comment. It would be interesting to generalize \eqref{eq:estim_noise}
% to the case where the constant $c$ may depend on $|X(0)|.$
% Then constant $c$ in \eqref{eq:cond_noise} may also depend on $x$.}

\section{Small-noise perturbations of ODEs}\label{section:smallNoise}

Let $\{X_\ve(t)\}$ be a solution to the SDE \eqref{eq:perturbed} with the initial condition
$
X_\ve(0)=0$.

In this section we formulate and prove the main result of the 
paper on a limit of $\{X_\ve\}$ as $\ve\to0,$ see Theorem \ref{thm:main_limit_small_noise} below.

% Let $\{X_\ve(t)\}$ be a solution to the SDE {eq:perturbed}
% \bel{eq:small_stable}
% dX_\ve(t)= A(X_\ve(t)) dt +\ve dB_\alpha(t)
% \ee
 
% with the initial condition
% \[
% X_\ve(0)=0,
% \]
% where $\{B_\alpha(t)\}$ is $d$-dimensional $\alpha$-stable process, vector field $A$
% satisfies the local Lipschitz property on $\mbR^d\setminus\{0\}$ and $A$
% has a sublinear growth at the infinity.

%  We always assume here that

\begin{assum}\label{assum:0} The function $A$ is such that for any $\ve>0$ and any initial starting point there exists a unique weak solution of
\eqref{eq:perturbed}, and the solution is a strong Markov process.
\end{assum}

 Assumption \ref{assum:0}  holds  true if, for example,  $A\in L_{p, loc},$ where $p>\frac{d}{\alpha-1}$ and $\alpha>1,$
see existence in \cite{Portenko1995, Bogdan} and uniqueness in \cite{ChenWang}.

% In this section we study existence of a limit in distribution for $\{X_\ve\}$ as $\ve\to0.$

% Consider a formal limit equation for \eqref{eq:perturbed}:
% \bel{eq:small_stable_lim}
% dX_0(t)= A(X_0(t)) dt,
% \ee
 
% \bel{eq:small_stable_lim1}
% X_0(0)=0.
% \ee
% Note that if $A$ were locally Lipschitz in the whole $\mbR^d$, then equation
% \eqref{eq:small_stable_lim}, \eqref{eq:small_stable_lim1} has a unique solution and a statement $X_\ve\Rightarrow X_0, \ve\to0$ would be a standard one.

% However, if the Lipschitz property fails at 0, then
% \eqref{eq:small_stable_lim}, \eqref{eq:small_stable_lim1} may have multiple solutions, see 
% for example equation
% \eqref{eq_Levy_limit} and the set of solutions \eqref{eq_sol_ODE1}.

Recall that radial and tangential components $A_{rad}$ and $A_{tan}$  of the vector field $A$ are defined in
  \eqref{eq:A_representation}.

\begin{assum}\label{assum:1}
\begin{enumerate}
\item the vector field $A$ is  Lipschitz continuous function in any compact
 set $G\subset \mbR^d\setminus\{0\}$;
\item
 we have representation in a neighborhood of zero:
\[
A_{rad}(x)=a(x)r^\beta\vf,
\]
where $ |\beta|<1 $ and there is a positive continuous function $\bar a : S^{d-1}\to \mbR $ such that
\[
\forall \vf_0\in S^{d-1} \ \ \ \exists \lim\limits_{\substack{r\to0+\\ \vf \to\vf_0}} a(x)=: \bar a(\vf_0);
\]

\item
\bel{eq:A_t_0}
\exists \gamma>0\ :\ \sup_{|y|=r}|A_{tan}(y)|=o(r^{\beta+\gamma}),\ r\to0+;
\ee
\end{enumerate}

\end{assum}

We need the following assumption on existence of a collection  of solutions to
 \eqref{eq:unperturbed}, \eqref{eq:small_stable_lim1}, which are parametrized by the angle
$\vf$ of exit from 0.

\begin{assum}\label{assum:2}
For any $\vf\in S^{d-1}$ there is a unique solution $X_0(t)=X_0(t,\vf)$  to
\eqref{eq:unperturbed}, \eqref{eq:small_stable_lim1}
such that $X_0(t)\neq 0, t>0$ and
\[
\lim_{t\to 0+}\frac{X_0(t,\vf)}{|X_0(t,\vf)|}=\vf.
\]
Moreover, for any $T>0$  the pair
$(R_0(t,\vf), \Phi_0(t,\vf)):=(|X_0(t,\vf)|, \frac{X_0(t,\vf)}{|X_0(t,\vf)|})$   %and any  $\vf\in S^{d-1}$ we have  convergence
% \[
% \lim_{\substack{y\to 0\\ \frac{y}{|y|}\to \vf}}X_{0,y}(t) \overset{t\in[0,T]}\rightrightarrows
%  X_0(t,\vf)
% \]
is uniformly continuous for  $(t, \vf)\in (0,T]\times S^{d-1}.$

%Here $X_{0,y}$ is a solution to \eqref{eq:small_stable_lim} such that $X_{0,y}(0)=y,\ y\neq 0.$

\end{assum}
  
The function $A$ satisfies Assumption \ref{assum:2} if, for example,
\[
A_{rad}(x)=a(r, \vf) r^\beta,\ \ A_{tan}(x)=b(r, \vf) r^{\beta+\delta }, \ \ r\in(0,1],\ \vf\in S^{d-1},\]
where 
  $\delta\in(0,1), \, \beta\in(-\infty,1)$,   the function $a: [0,\infty)\times\mbR^d\to\mbR$  is positive, continuous, bounded and separated from zero, 
the function $b=b(r,\vf): [0,\infty)\times\mbR^d\to\mbR^d$   is bounded and continuous,  $a$ and $b$  are 
Lipschitz continuous in $\vf$ uniformly in $r.$
Indeed,   $X_0$ is a solution of  \eqref{eq:unperturbed}, \eqref{eq:small_stable_lim1}
such that $X_0(t)\neq 0, t>0$ if and only if its radial and angular parts  satisfy the
equation
\bel{eq:1035} 
\ba
dR_0(t)=a(R_0(t), \Phi_0(t)) R_0^\beta(t) dt, \\
d\Phi_0(t)=(I- \Phi_0(t)\Phi_0^T(t))b(R_0(t), \Phi_0(t)) R_0^{\beta+\delta-1}(t) dt,\\
R_0(t)>0,\ t>0,
\ea
\ee
where $I$ is identity $d\times d$ matrix, $\vf^T$ is transposed vector.
Existence, uniqueness and other properties of  solution of \eqref{eq:1035}  follow from Theorem \ref{thm:Lip_expl}, see Appendix.

%The main result of this section is the following.
\begin{thm}\label{thm:main_limit_small_noise}
Suppose that Assumptions \ref{assum:0}, \ref{assum:1}, \ref{assum:2}  are satisfied and $\alpha\in(1,2],  \ \beta<1,$ and $\alpha+\beta>1.$

Let $ \bar X $ be a solution to the SDE
\bel{eq:SDE_lim0}
d\bar X(t) =\bar a(\frac{\bar X(t)}{|\bar X(t)|})\bar X^\beta(t)dt +dB_\alpha(t), \ t\geq 0,
\ee
\bel{eq:SDE_lim0init}
\bar X(0) =0,
\ee
and
\bel{eq:25.1}
\bar\Phi(+\infty):=\lim_{t\to\infty}\frac{\bar X(t)}{|\bar X(t)|}
\ee
(existence of a.s. limit follows from Theorem \ref{thm:SDE_growth}, see  Example \ref{expl:stable}).

Then  solutions of \eqref{eq:perturbed}, \eqref{eq:perturbed_ini}  converge  in distribution
\[
X_\ve(\cdot) \Rightarrow X_0(\cdot, \bar\Phi(+\infty)),\ \ve\to0,
\]
in the Skorokhod space ${D([0,\infty))}$ of c\`adl\`ag functions, where $X_0$ is from Assumption \ref{assum:2}, $\bar\Phi(+\infty)$ is defined in \eqref{eq:25.1}.
\end{thm}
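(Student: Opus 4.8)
The plan is to reduce the small-noise problem near zero to the large-time problem already solved in Theorem~\ref{thm:SDE_growth}. The key device is the time-space rescaling $\tilde X_\ve(t):=\ve^{-\alpha/(\alpha+\beta-1)}X_\ve(\ve^{\alpha(1-\beta)/(\alpha+\beta-1)}t)$ mentioned in the remark preceding the statement. First I would compute, using the self-similarity of $B_\alpha$ and the change-of-variables formula, that $\tilde X_\ve$ solves an SDE whose drift is $\ve^{-\alpha\beta/(\alpha+\beta-1)}A(\ve^{\alpha/(\alpha+\beta-1)}\,\cdot\,)$ plus a driving $\alpha$-stable term of order one. The asymptotic relation \eqref{eq:A_asympt}, namely $A(x)\sim\bar a(\vf)x^\beta$ as $x\to0$, together with the tangential bound \eqref{eq:A_t_0}, guarantees that the rescaled drift converges (locally uniformly away from $0$, in the regime relevant for large spatial argument) to the model drift $\bar a(\vf)\,y^\beta$. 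Thus $\tilde X_\ve$ is, up to vanishing perturbations, a solution of the model equation \eqref{eq_Levy_eps} with $\ve=1$, and the exit-angle problem at $0$ for $X_\ve$ becomes the large-time limit-angle problem for the model equation.

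The second block of the argument is to apply Theorem~\ref{thm:SDE_growth} to the model equation \eqref{eq:SDE_lim0}, whose coefficients satisfy \eqref{eq:A_r} and \eqref{eq:A_t_infty} at infinity by construction, and whose noise bound \eqref{eq:cond_noise} is verified in Example~\ref{expl:stable} via the Khintchine-type estimate \eqref{eq:Khintchine}. This yields the a.s.\ existence of $\bar\Phi(+\infty)=\lim_{t\to\infty}\bar X(t)/|\bar X(t)|$ and the radial equivalence $\bar X(t)\sim\bar X(t,\bar\Phi(+\infty))$ from \eqref{eq:funcY}. I would then argue that the exit-angle distribution for the original perturbed equation \eqref{eq:perturbed} coincides with the law of $\bar\Phi(+\infty)$: the rescaling transports the question of which angle $X_\ve$ leaves a small ball around $0$ into the question of the terminal angle of $\tilde X_\ve$, and the latter converges in law to the terminal angle of the model solution because the rescaled drift converges to the model drift.

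The third block is to control the process after exit and assemble weak convergence in $D([0,\infty))$. One must show that any limit process spends zero Lebesgue time at $0$ (so it exits immediately and, by $\bar a>0$, never returns, as noted in the introduction) and that, conditionally on the exit angle being $\vf$, the post-exit trajectory of $X_\ve$ converges to $X_0(\cdot,\vf)$, the unique solution from Assumption~\ref{assum:2}. Here the local Lipschitz continuity of $A$ on $\mbR^d\setminus\{0\}$ (Assumption~\ref{assum:1}(1)) gives stability of solutions away from the origin, so that once the trajectory is bounded away from $0$ the noise $\ve B_\alpha$ drives convergence to the deterministic flow; the uniform continuity of $(R_0(t,\vf),\Phi_0(t,\vf))$ in Assumption~\ref{assum:2} lets me glue the exit-angle limit to the post-exit flow continuously. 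I would combine these through a Skorokhod-representation or continuous-mapping argument: convergence of the exit angle in law, plus a.s.\ (along a suitable coupling) convergence of the post-exit paths given the angle, yields $X_\ve\Rightarrow X_0(\cdot,\bar\Phi(+\infty))$ in $D([0,\infty))$.

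The main obstacle I expect is the tightness and the uniform-in-$\ve$ control of the time spent near $0$, i.e.\ establishing that the exit time from a small ball, suitably rescaled, has no mass escaping to infinity and that the limit places no positive Lebesgue measure at the origin. This requires a quantitative estimate on how fast the radial part $|\tilde X_\ve|$ grows, so that the rescaled exit time is tight; the radial equivalence $R(t)\sim((1-\beta)\bar a(\Phi(\infty))t)^{1/(1-\beta)}$ from Theorem~\ref{thm:integral_eq_growth} is exactly the tool for this, but transferring it uniformly across the family $\{\tilde X_\ve\}$—rather than for a single fixed equation—and matching the perturbation bound \eqref{eq:cond_noise} uniformly in $\ve$ is the delicate point. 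A secondary difficulty is verifying that the error in replacing the genuine rescaled drift by the model drift $\bar a(\vf)y^\beta$ is negligible for the angle limit; this is where \eqref{eq:A_t_0} and the continuity of $\bar a$ must be used carefully to ensure the tangential discrepancy stays of lower order as $\ve\to0$.
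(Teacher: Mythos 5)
Your overall strategy coincides with the paper's: rescale via \eqref{eq:space-time}, identify the limit of the rescaled process with the model equation \eqref{eq:SDE_lim0}, extract the exit angle from the large-time Theorem~\ref{thm:SDE_growth}, show the exit time from a small ball is negligible, and glue the post-exit trajectory to the deterministic flow $X_0(\cdot,\vf)$ using Assumption~\ref{assum:2} together with a Skorokhod-representation/coupling argument. All of these blocks appear in the paper's proof in essentially the form you describe.

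There is, however, a concrete gap at the step where you claim that ``the rescaling transports the question of which angle $X_\ve$ leaves a small ball around $0$ into the question of the terminal angle of $\tilde X_\ve$.'' The weak convergence $\tilde X_\ve\Rightarrow\bar X$ controls the rescaled process only on windows that are compact in the rescaled coordinates, i.e.\ it governs the exit of $X_\ve$ from balls of \emph{microscopic} radius $\ve^{\alpha/(\alpha+\beta-1)}R$ for fixed $R$. The macroscopic ball of radius $\delta$ whose exit angle you actually need corresponds, after rescaling, to a ball of radius $\ve^{-\alpha/(\alpha+\beta-1)}\delta\to\infty$, so neither the convergence of $\tilde X_\ve$ nor the fixed-$\delta$ stability coming from the local Lipschitz property of Assumption~\ref{assum:1} covers the annulus $\{\,y:\ \ve^{\alpha/(\alpha+\beta-1)}R\le|y|\le\delta\,\}$. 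The paper devotes its hardest lemma (Lemma~\ref{lem:3.4}) precisely to this zone: for starting points $x$ with $\ve^{\alpha/(\alpha+\beta-1)}R<|x|<\delta$ one shows that the process exits the $\delta$-ball with polar angle close to $x/|x|$, radius close to $\delta$, and in a short time, the mechanism being that in this annulus the radial drift dominates the noise so the angle is essentially frozen; the proof re-runs the $\tilde Z=Z-\xi$ decomposition of Theorem~\ref{thm:integral_eq_growth} on the rescaled equation. You flag a ``delicate point'' about uniformity in $\ve$, but you neither isolate this intermediate annulus nor supply the drift-domination argument that handles it; without that bridging lemma the identification of the exit angle from the macroscopic ball with $\bar\Phi(+\infty)$ (the paper's \eqref{eq:3.6.3}) — and hence the entire gluing step — does not follow.
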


The course of the proof is the following. 
At first we show   the weak relative compactness 
of $\{X_\ve\}$ and that any limit point is continuous a.s. Then we will show that $X_\ve$
 spends a small time in a small neighborhood of zero. So any limit point spends zero time at 0. 
The noise disappears in the limit as $\ve\to0$, so any limit process $X_0$ must satisfy  
\eqref{eq:unperturbed} for $t$ such that $X_0(t)\neq 0$.  Since $\bar a(x)>0$ 
in Assumption \ref{assum:1}, for any $t_0>0$ such that $X_0(t_0)\neq 0,$ we have
  $X_0(t)\neq 0, t\geq t_0$ a.s. Since $X_0$ spends zero time at 0, we get $X_0(t)\neq 0$ 
 for all $t>0$ a.s. Hence,  any limit point $X_0(t)$ must be of  the form 
$X_0(t,\Phi), $ where $X_0(t,\vf)$ is defined in Assumption \ref{assum:2} and $\Phi=\lim_{t\to0}\frac{X_0(t)}{|X_0(t)|}$.

Denote $\tau^{(\ve)}_\delta:=\inf\{ t\geq 0\ : \  |X_\ve(t)|\geq \delta\}$.
%  We have
% \bel{eq:667}
% X_0(\tau^{(0)}_\delta+t)=
% X_{0,X_0(\tau^{(0)}_\delta)}(t).
% \ee
 We shall show that if $\ve>0$ and $\delta>0$ are small, $\ve\ll \delta$, then
\bel{eq:672}
\frac{X_\ve(\tau^{(\ve)}_\delta) }{|X_\ve(\tau^{(\ve)}_\delta)|}\overset{d}\approx \bar\Phi(+\infty)
\ \ \mbox{and} \ \ |X_\ve(\tau^{(\ve)}_\delta)|  \approx \delta,\ \ \mbox{so} \ \ \ \ X_\ve(\tau^{(\ve)}_\delta) \overset{d}\approx \delta  \bar\Phi(+\infty),
\ee
where $\bar\Phi(+\infty)$ is defined in \eqref{eq:25.1}.

Here signs $\approx$ (or $\overset{d}\approx$)  mean that processes are ``close'' (``close'' in distribution).
 The rigorous statements will be given
later in subsection \ref{subsect:proof of Th4_1}.

This is the most difficult part of the proof. To do this, we  apply a space-time transform, see  \eqref{eq:space-time}
below,
and    analyze the behaviour of $X_\ve$ in a microscopic neighborhood of 0.

 By $X_{\ve,y}$, $\ve\geq0$ we denote a solution to   equation \eqref{eq:perturbed}  with initial starting point $X_{\ve,y}(0)=y.$

If we have %\eqref{eq:667}, 
\eqref{eq:672} and we know that $\tau^{(\ve)}_\delta$ is small, then the rest
 of the proof  follows from the next reasoning, where we use Assumption \ref{assum:2}
  combined with  some continuity arguments
\[
X_\ve(t)\approx X_\ve(\tau^{(\ve)}_\delta+t) \overset{d}= X_{\ve, y }(t) |_{y= X_\ve(\tau^{(\ve)}_\delta )}
\overset{d}\approx X_{\ve,y}(t)|_{y= \delta \bar \Phi(+\infty)} \overset{d}\approx X_{0, \delta \bar \Phi(+\infty)}(t)
\approx X_{0}(t,  \bar \Phi(+\infty)).
\]

\subsection{Proof of Theorem \ref{thm:main_limit_small_noise}. First steps}\label{subsect:proof of Th4_2}

\begin{lem}\label{lem:3.5}
 $\{ X_\ve\}$ is weakly relatively compact in $D([0,\infty)).$ Any 
limit process of $\{ X_\ve\}$ as $\ve\to0$ is continuous with probability 1.
\end{lem}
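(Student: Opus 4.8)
The plan is to establish weak relative compactness in $D([0,\infty))$ via a tightness criterion for c\`adl\`ag processes, and then to rule out jumps in any limit point by controlling the jump sizes of $X_\ve$ uniformly. First I would fix an arbitrary $T>0$ and work on $D([0,T])$; relative compactness on $D([0,\infty))$ then follows by a standard diagonal/projective argument, since tightness on each $[0,T]$ is enough. To verify tightness on $[0,T]$ I would split the dynamics as $X_\ve(t)=\int_0^t A(X_\ve(s))\,ds + \ve B_\alpha(t)$. The drift part is absolutely continuous, and the noise part $\ve B_\alpha(\cdot)$ is a rescaled $\alpha$-stable process whose law is fixed up to the multiplicative factor $\ve$; since $\ve B_\alpha \Rightarrow 0$ in $D([0,T])$ as $\ve\to 0$ and each $\{\ve B_\alpha\}$ is itself tight, the noise contribution is harmless. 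The genuine work is to control the drift, and here I would use the a priori growth bound one gets from the structure of $A$: outside a neighborhood of $0$, $A$ is locally Lipschitz, and near $0$ it has the asymptotics \eqref{eq:A_asympt} with $\beta\in(0,1)$, so $|A(x)|$ is bounded on bounded sets.

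The key quantitative step is a uniform (in $\ve$) moment or exit bound showing $\sup_{\ve}\Pb(\sup_{t\le T}|X_\ve(t)|\ge M)\to 0$ as $M\to\infty$, together with an oscillation estimate. For the former I would compare $|X_\ve(t)|$ against the solution of a scalar ODE dominating the radial drift, using that $\langle A(x),x/|x|\rangle \le C(1+|x|)$ follows from local Lipschitzness plus the sublinear behavior near the origin, and absorbing the noise via \eqref{eq:Khintchine} or a maximal inequality for $\ve B_\alpha$. For the oscillation/modulus-of-continuity estimate appropriate to $D([0,T])$, I would bound the drift increment $|\int_s^t A(X_\ve(u))\,du|\le C|t-s|$ on the event that $X_\ve$ stays in a large ball, which gives equicontinuity of the drift part, and reduce the jump behavior entirely to that of $\ve B_\alpha$. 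Concretely I would apply Aldous's tightness criterion or the standard $D([0,T])$ compactness criterion in terms of the modulus $w'(X_\ve,\theta)$, where the drift contributes a genuinely continuous modulus and the noise contributes $\ve\,w'(B_\alpha,\theta)$, which is uniformly controllable.

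For the second assertion, that any limit point is continuous a.s., the idea is that the only source of jumps in $X_\ve$ is $\ve B_\alpha$, whose jumps are exactly $\ve$ times the jumps of $B_\alpha$. Since $\Delta X_\ve(t)=\ve\,\Delta B_\alpha(t)$, for any fixed $T$ and any threshold $\eta>0$ the number of jumps of $X_\ve$ on $[0,T]$ exceeding $\eta$ equals the number of jumps of $B_\alpha$ exceeding $\eta/\ve$, and the expected number of the latter is $T\,\nu(\{|z|>\eta/\ve\})\to 0$ as $\ve\to 0$ because the $\alpha$-stable L\'evy measure is finite outside any neighborhood of the origin. Hence $\sup_{t\le T}|\Delta X_\ve(t)|\to 0$ in probability, so along any weakly convergent subsequence the limit has no jumps exceeding $\eta$; letting $\eta\downarrow 0$ shows the limit lies in $C([0,T])$ a.s. Passing the supremum functional $x\mapsto \sup_{t\le T}|\Delta x(t)|$ through the weak limit requires care since it is not continuous on all of $D$, so I would instead use that the set $\{x\in D([0,T]): \sup_{t\le T}|\Delta x(t)|\le \eta\}$ is closed and apply the Portmanteau theorem to the lower-semicontinuity of the jump functional.

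The main obstacle I anticipate is the uniform-in-$\ve$ tightness of the drift near the singularity at $0$: although $A$ is bounded near $0$, one must ensure the estimates do not degenerate as $\ve\to0$ when $X_\ve$ spends time in the microscopic region where the noise and drift are comparable. I expect this to be handled by the crude bound that $|A|$ is simply bounded on any fixed ball (so the drift is genuinely Lipschitz-in-time there), which is enough for tightness and continuity of limits; the finer analysis of what happens at $0$ is deferred to the later steps of the proof (the space-time transform and \eqref{eq:672}), not needed for this lemma.
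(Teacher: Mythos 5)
Your proposal is correct and follows essentially the same route as the paper: both decompose $X_\ve$ into the drift integral plus $\ve B_\alpha$, use a uniform-in-$\ve$ bound on $\Pb(\sup_{t\le T}|X_\ve(t)|\ge M)$ together with local boundedness of $A$ to control the drift oscillation on the event of staying in a fixed ball, and let the vanishing noise account for all jumps; the paper merely packages tightness and a.s.\ continuity of limit points into the single Billingsley modulus-of-continuity criterion, whereas you treat them separately (Aldous/$w'$ plus a jump-counting and Portmanteau argument), which is a presentational rather than substantive difference. The only caveat, shared with the paper (which simply asserts linear growth of $A$ at infinity), is that the bound $\lg A(x),x/|x|\rg\le C(1+|x|)$ does not formally follow from Lipschitz continuity on compact subsets of $\mbR^d\setminus\{0\}$ together with the asymptotics at the origin, and is really an implicit standing assumption.
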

The proof is standard,  see  Appendix.

Let $f$ be a c\`adl\`ag function, $f(0)=0.$ By $X_{f,x}(t)$
 denote a solution to the integral equation
 \[
X_{f,x}(t)= x+\int_0^t A(X_{f,x}(s)) ds + f(t).
\]
\begin{remk}
Function $X_{f,x}(t)$ is well-defined until it hits 0.
Note that if $f=0$ and $x\neq0,$ then the function $ X_{0, x}$ never hits 0 because  $A_{rad}(x)\sim \bar a(\frac{x}{|x|}) x^\beta$ as $x\to0$, where $\bar a$ is positive and continuous.
\end{remk}

\begin{lem}\label{lem:int-eq-cont}
Let Assumption \ref{assum:1} be satisfied. Then for any $x_0\neq 0$ and $T>0$ we have  the uniform convergence on $[0,T]$:
\[
\lim\limits_{\substack{\|f\|_{[0,T]}\to0\\ x \to x_0}}  \|X_{f,x}- X_{0,x_0}\|_{[0,T]}=0,
\]
where $\|g\|_{[0,T]}:=\sup_{[0,T]}|g(t)|.$

Moreover,
\[
\forall T>0\ \forall \mu>0
\ \forall \delta\in(0,1)   \ \exists \ve >0\ \forall f, \ \|f\|_{[0,T]}\leq \ve \
 \forall x,  |x|\in[\delta, \delta^{-1}]\ \forall y, \ |y-x|\leq \ve: 
 \]
\[
\sup_{t\in[0,T]}\left| X_{f,y}(t)-X_{0,x}(t)\right|\leq \mu.
\]
\end{lem}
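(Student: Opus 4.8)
The plan is to run a Gronwall comparison argument; the only real difficulty is that $A$ is merely locally Lipschitz on $\mbR^d\setminus\{0\}$, so the whole comparison must be confined to a compact set bounded away from $0$. For the first statement, fix $x_0\neq 0$ and $T>0$. By the preceding remark the unperturbed solution $X_{0,x_0}$ never hits $0$, so its image $K_0:=\{X_{0,x_0}(t):t\in[0,T]\}$ is a compact subset of $\mbR^d\setminus\{0\}$. I would fix $\rho\in(0,\tfrac12\,\mathrm{dist}(K_0,0))$ and let $G$ be the closed $\rho$-neighbourhood of $K_0$; then $G$ is compact and disjoint from $0$, so by Assumption \ref{assum:1}(1) the field $A$ is $L$-Lipschitz on $G$ for some $L=L(x_0,T,\rho)$. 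Subtracting the two integral equations and using $|f(t)|\le\|f\|_{[0,T]}$, as long as both trajectories remain in $G$ up to time $t$ one gets
\[
|X_{f,x}(t)-X_{0,x_0}(t)|\le |x-x_0|+\|f\|_{[0,T]}+L\int_0^t|X_{f,x}(s)-X_{0,x_0}(s)|\,ds,
\]
whence Gronwall yields $|X_{f,x}(t)-X_{0,x_0}(t)|\le(|x-x_0|+\|f\|_{[0,T]})e^{LT}$.

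To legitimise the phrase ``both trajectories remain in $G$'' I would introduce the first-exit time $\tau:=\inf\{t\in[0,T]:|X_{f,x}(t)-X_{0,x_0}(t)|>\rho\}$. For $s<\tau$ the deviation is $\le\rho$, so $X_{f,x}(s)\in G$ and the Lipschitz bound applies; since the one possible jump of the cadlag $f$ at $\tau$ affects only a measure-zero point of the integral and is already absorbed into the $\|f\|_{[0,T]}$ term, the displayed integral estimate remains valid up to and including $\tau$. Hence if $|x-x_0|$ and $\|f\|_{[0,T]}$ are so small that $(|x-x_0|+\|f\|_{[0,T]})e^{LT}<\rho$, then $|X_{f,x}(\tau)-X_{0,x_0}(\tau)|<\rho$, contradicting the definition of $\tau$ unless $\tau>T$. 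Thus the Gronwall bound holds on all of $[0,T]$ and tends to $0$ as $|x-x_0|\to0$ and $\|f\|_{[0,T]}\to0$, which proves the first claim.

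For the uniform statement the reference trajectory now ranges over the compact annulus $\{\delta\le|x|\le\delta^{-1}\}$, so I need a single compact tube away from $0$ serving all of them. A uniform lower bound is elementary: since $A_{tan}\perp\vf$ one has $\frac{d}{dt}|X_{0,x}(t)|=a(X_{0,x}(t))\,|X_{0,x}(t)|^{\beta}$, and by Assumption \ref{assum:1}(2) together with the continuity and positivity of $\bar a$ there are $r_0>0$, $c_0>0$ with $a(z)\ge c_0$ for $0<|z|\le r_0$; a first-passage argument then shows the radial part is strictly increasing whenever it is $\le r_0$, so $|X_{0,x}(t)|\ge\min(\delta,r_0)=:\ell>0$ for all $t\in[0,T]$ and all $x$ in the annulus. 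A uniform upper bound follows from the already-proved first statement: the map $x\mapsto X_{0,x}$ is continuous into $C([0,T];\mbR^d)$ and the annulus is compact, so $K:=\{X_{0,x}(t):\delta\le|x|\le\delta^{-1},\ t\in[0,T]\}$ is bounded; combined with the lower bound, $\overline{K}$ is a compact subset of $\mbR^d\setminus\{0\}$.

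Taking $G$ to be a closed $\rho$-neighbourhood of $\overline{K}$ with $\rho<\tfrac12\ell$, the field $A$ is $L$-Lipschitz on $G$ with $L$ now independent of $x$, and repeating the Gronwall/confinement argument verbatim gives $\sup_{t\in[0,T]}|X_{f,y}(t)-X_{0,x}(t)|\le(|y-x|+\|f\|_{[0,T]})e^{LT}\le 2\ve e^{LT}$ simultaneously for all admissible $x,y,f$; choosing $\ve$ with $2\ve e^{LT}\le\mu$ (and also $<\rho$, so that the confinement step runs) completes the proof. I expect the main obstacle to be precisely this confinement: because $A$ is not Lipschitz near $0$ (its derivative blows up there), one must rule out, uniformly and despite the jumps of the cadlag perturbation $f$, that $X_{f,y}$ ever drifts near $0$ or escapes the compact tube. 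The two facts that rescue the argument are the strict positivity of the radial drift (Assumption \ref{assum:1}(2), $\bar a>0$), which keeps trajectories uniformly away from $0$, and the compactness of the annulus together with the continuous dependence from the first statement, which supplies the uniform upper bound.
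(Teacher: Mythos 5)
Your proof is correct and takes essentially the same route as the paper's, which is only sketched there: separation of the reference trajectories from the origin (via the strictly positive radial drift from Assumption \ref{assum:1}), a Lipschitz bound for $A$ on a compact tube avoiding $0$, and a Gronwall comparison with a first-exit confinement argument. The extra care you take with the cadlag perturbation and with the uniformity over the annulus $\{\delta\le |x|\le \delta^{-1}\}$ fills in exactly the details the paper leaves to the reader.
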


% \begin{lem}\label{lem:int-eq-cont}
% Let Assumption \ref{assum:1} be satisfied. Then for any $x_0\neq 0$ and $T>0$ we have  the uniform convergence on
% the compact sets:
% \[
% \lim\limits_{\substack{\|f\|_{[0,T]}\to0\\ x \to x_0}}  \|X_{f,x}- X_{0,x_0}\|_{[0,T]}=0,
% \]
% where $\|g\|_{[0,T]}:=\sup_{[0,T]}|g(t)|.$

% Moreover,
% \[
% \forall T>0\ \forall \mu>0
% \ \exists \delta_0>0, \nu>0\ \forall \delta\in(0,\delta_0] \ \exists \ve_0>0\ \forall f, \ \|f\|_{[0,T]}\leq \ve_0 \
% \]
% \[
%  \forall x,y, |x|=\delta,\ \left| \frac{|x|}{|y|}-1\right|<\nu, \ \left| \frac{x}{|x|}-\frac{y}{|y|}\right|<\nu:
% \]
% \[
% \sup_{t\in[0,T]}\left| X_{f,y}(t)-X_{0,x}(t)\right|\geq \mu
% \]
% and
% \[
% \forall T>0\ \forall \mu>0
% \ \forall \delta \in(0,1)\  \exists \ve_0=\ve_0(\mu, \delta)>0\ \forall \ve\in(0,\ve_0)\ \forall x, |x|\in [\delta,\frac1\delta]\ \ \forall f, \ \|f\|_{\infty}\leq \ve :
% \]
% \[
%  \sup_{t\in[0,T]}\left| X_{f,x}(t)-X_{0,x}(t)\right|\leq \mu.
% \]
% \end{lem}
The proof of this Lemma is standard. Indeed, observe  that 
Assumption \ref{assum:1} ensures that $X_{0,x}(t), t\in[0,T]$ is separated from  0. 
Since $A$ satisfies the local 
Lipschitz condition 
outside of 0 and linear growth condition, the proof follows from the application of Gronwall's lemma.

Let us denote by $X_{\ve, x}, \ve\geq 0$ the solution to  \eqref{eq:perturbed} with
 initial condition $X_{\ve, x}(0)=x\neq 0$ (I.e., $X_{\ve, x}(t)$ is $X_{\ve B_{\alpha}, x}(t)$ from Lemma \ref{lem:int-eq-cont}. We hope that the reader will not be confused by our notations).

\begin{corl}\label{corl:3.2}
Let the assumptions of Theorem \ref{thm:main_limit_small_noise} be satisfied. Then for any $x_0\neq 0$ we have a.s.
 convergence of stochastic processes:
\[
\lim\limits_{\substack{\ve\to0\\ x \to x_0}}  X_{\ve,x}(\cdot)  =  X_{0,x_0}(\cdot)\ \ \mbox{ in  } \    D([0,\infty)),
\]
where $D([0,\infty))=D([0,\infty),\mbR^d)$ denotes the Skorohod space of c\`adl\`ag functions.
Moreover,
\[
\forall T>0\ \forall \mu>0
\ \forall \delta \in(0,1)\ \exists \nu>0\   \exists \epsilon_0=\epsilon_0(\mu, \delta)>0\
 \forall \ve\in [0,\epsilon_0]\ 
\]
\bel{eq:3.5.0}
\forall x, |x|\in [\delta,\frac1\delta]\ \forall y,\ |y-x|\leq \nu:\ \ \ \ \Pb\left(  \sup_{t\in[0,T]}\left| X_{\ve,y}(t)-X_{0,x}(t)\right|\geq \mu \right)\leq \mu.
\ee 
In particular,
if random variables $\{\zeta_\ve, \ve\geq 0\}$ are independent of the noise and 
 $\zeta_\ve\Rightarrow \zeta_0$ as $ \ve\to0,$ where
$\zeta_0\neq0$ a.s., then
\bel{eq:720}
X_{\ve,\zeta_\ve} \Rightarrow   X_{0,\zeta_0} ,\ \mbox{ as } \ \ve\to0 \ \mbox{ in  } \ D([0,\infty)).
\ee
\end{corl}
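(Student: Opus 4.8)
The plan is to reduce both assertions to the deterministic continuity statement of Lemma \ref{lem:int-eq-cont}, exploiting the pathwise identification $X_{\ve,x}=X_{\ve B_\alpha,x}$ (valid because the noise is additive) together with the elementary fact that the cadlag path of $B_\alpha$ is a.s.\ bounded on each compact, so that $\|\ve B_\alpha\|_{[0,T]}=\ve\,\|B_\alpha\|_{[0,T]}\to0$ a.s.\ as $\ve\to0$. With this, the first (a.s.) assertion is immediate: for a.e.\ $\omega$ I apply the first part of Lemma \ref{lem:int-eq-cont} with $f=\ve B_\alpha(\cdot,\omega)$ and the perturbed starting point $x\to x_0\neq0$, obtaining $\|X_{\ve,x}-X_{0,x_0}\|_{[0,T]}\to0$ for every $T$; uniform convergence on compacts to the continuous limit $X_{0,x_0}$ yields convergence in $D([0,\infty))$.

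For the uniform estimate \eqref{eq:3.5.0} I would invoke the second part of Lemma \ref{lem:int-eq-cont}: given $T,\mu,\delta$ it produces a deterministic threshold $\eta=\eta(T,\mu,\delta)>0$ such that $\|f\|_{[0,T]}\le\eta$ and $|y-x|\le\eta$ (with $|x|\in[\delta,1/\delta]$) force $\sup_{t\le T}|X_{f,y}(t)-X_{0,x}(t)|\le\mu/2$. Taking $\nu:=\eta$ and $f=\ve B_\alpha$, the event $\{\sup_{t\le T}|X_{\ve,y}(t)-X_{0,x}(t)|\ge\mu\}$ is then contained in $\{\|\ve B_\alpha\|_{[0,T]}>\eta\}=\{\|B_\alpha\|_{[0,T]}>\eta/\ve\}$, and crucially this containment is uniform over all admissible $x,y$ because $\eta$ does not depend on them. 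Since $\|B_\alpha\|_{[0,T]}<\infty$ a.s., I choose $M_0$ with $\Pb(\|B_\alpha\|_{[0,T]}>M_0)\le\mu$ and set $\epsilon_0:=\eta/M_0$; then for every $\ve\le\epsilon_0$ one has $\eta/\ve\ge M_0$, so the bad-event probability is at most $\mu$, uniformly in $x,y$ (the case $\ve=0$ being trivial, as there the bound holds deterministically).

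Finally, for the weak-convergence conclusion \eqref{eq:720} I argue along an arbitrary sequence $\ve_n\to0$ and establish convergence in probability after a Skorokhod coupling. By the Skorokhod representation theorem I realize $\tilde\zeta_{\ve_n}\to\tilde\zeta_0$ a.s., placed on a space independent of the noise so that $(\tilde\zeta_{\ve_n},B_\alpha)\overset{d}=(\zeta_{\ve_n},B_\alpha)$ and hence $X_{\ve_n,\tilde\zeta_{\ve_n}}\overset{d}=X_{\ve_n,\zeta_{\ve_n}}$, $X_{0,\tilde\zeta_0}\overset{d}=X_{0,\zeta_0}$. Fixing $T$ and a tolerance $\rho>0$, I apply \eqref{eq:3.5.0} with a sufficiently small $\mu\le\rho$ and with $\delta$ chosen (using $0<|\tilde\zeta_0|<\infty$ a.s.) so that $\Pb(|\tilde\zeta_0|\notin[\delta,1/\delta])$ is small; on the event $G=\{|\tilde\zeta_0|\in[\delta,1/\delta]\}\cap\{|\tilde\zeta_{\ve_n}-\tilde\zeta_0|\le\nu\}$ I condition on $\tilde\zeta$ and invoke \eqref{eq:3.5.0}, legitimate because $\tilde\zeta$ is independent of the noise, whereas $\Pb(G^c)$ is small for $n$ large since $\tilde\zeta_{\ve_n}\to\tilde\zeta_0$ in probability. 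This gives $\sup_{t\le T}|X_{\ve_n,\tilde\zeta_{\ve_n}}-X_{0,\tilde\zeta_0}|\to0$ in probability for every $T$, hence convergence in probability in $D([0,\infty))$, and therefore $X_{\ve_n,\zeta_{\ve_n}}\Rightarrow X_{0,\zeta_0}$; as the sequence was arbitrary the full limit $\ve\to0$ follows.

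The principal obstacle is exactly this last transfer of the deterministic-starting-point estimate \eqref{eq:3.5.0} to random, converging initial data: it is the assumed independence of $\zeta_\ve$ from the driving noise that permits freezing $\tilde\zeta$ and applying \eqref{eq:3.5.0} conditionally, and the hypothesis $\zeta_0\neq0$ a.s.\ that keeps the frozen starting points inside a compact annulus $\{\,|x|\in[\delta,1/\delta]\,\}$ bounded away from the singularity of $A$ at the origin, where Lemma \ref{lem:int-eq-cont}, and with it the whole scheme, ceases to apply. The deterministic-to-probabilistic passages in the first two assertions, by contrast, are routine once the pathwise boundedness of $B_\alpha$ and the tail bound $\Pb(\|B_\alpha\|_{[0,T]}>M)\to0$ are in hand.
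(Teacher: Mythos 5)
Your proposal is correct and follows exactly the route the paper intends: the corollary is stated as an immediate consequence of Lemma \ref{lem:int-eq-cont} via the pathwise identification $X_{\ve,x}=X_{\ve B_\alpha,x}$, with the a.s.\ bound $\|\ve B_\alpha\|_{[0,T]}\to 0$ supplying the deterministic hypothesis and independence of $\zeta_\ve$ from the noise justifying the conditioning step for \eqref{eq:720}. Your write-up merely makes explicit the quantifier bookkeeping (choice of $\nu=\eta$, $\epsilon_0=\eta/M_0$) and the Skorokhod coupling that the paper leaves implicit.
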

\begin{remk}\label{remk:1182}
Equation \eqref{eq:3.5.0} may be convenient to write in terms of absolute value of vectors $x,y$ and their 
polar angles (with possibly another $\nu$):
\be\label{eq:3.5.0_ext}
\begin{aligned}
\forall x, |x|\in [\delta,\frac1\delta]\ \forall y\
\mbox{ such that } \ \left|\frac{|x|}{|y|}-1\right|\leq \nu \mbox{ and } 
\left|\frac{x}{|x|}-\frac{y}{|y|}\right|\leq \nu:\\
\ \ \ \ \ \Pb\left(  \sup_{t\in[0,T]}\left| X_{\ve,y}(t)-X_{0,x}(t)\right|\geq \mu \right)\leq \mu.
\end{aligned}
\ee
 \end{remk}

% \begin{corl}\label{corl:3.2}
% Let the assumptions of Theorem \ref{thm:main_limit_small_noise} be satisfied. Then for any $x_0\neq 0$ we have a.s.
%  convergence of stochastic processes:
% \[
% \lim\limits_{\substack{\ve\to0\\ x \to x_0}}  X_{\ve,x}(\cdot)  =  X_{0,x_0}(\cdot)\ \ \mbox{ in  } \    D([0,\infty)).
% \]
% Moreover,
% \[
% \forall T>0\ \forall \mu>0
% \ \exists \delta_0>0, \nu>0\ \forall \delta\in(0,\delta_0] \ \exists \ve_0>0\ \forall \ve\in[0,\ve_0]\
% \]
% \[
%  \forall x,y, |x|=\delta,\ \left| \frac{|x|}{|y|}-1\right|<\nu, \ \left| \frac{x}{|x|}-\frac{y}{|y|}\right|<\nu:
% \]
% \bel{eq:3.5.0}
% \Pb\left(  \sup_{t\in[0,T]}\left| X_{\ve,y}(t)-X_{0,x}(t)\right|\geq \mu \right)\leq \mu
% \ee
% and
% \[
% \forall T>0\ \forall \mu>0
% \ \forall \delta \in(0,1)\  \exists \epsilon_0=\epsilon_0(\mu, \delta)>0\
%  \forall \ve\in(0,\epsilon_0)\ \forall x, |x|\in [\delta,\frac1\delta]\
% \]
% \bel{eq:3.5.1}
% \Pb\left(  \sup_{t\in[0,T]}\left| X_{\ve,x}(t)-X_{0,x}(t)\right|\geq \mu \right)\leq \mu.
% \ee

% In particular,
% if random variables $\{\zeta_\ve, \ve\geq 0\}$ are independent of the noise,
% $\zeta_0\neq0$ a.s. and $\zeta_\ve\Rightarrow \zeta_0,\ \ve\to0,$ then
% \bel{eq:720}
% X_{\ve,\zeta_\ve} \Rightarrow   X_{0,\zeta_0} ,\ \mbox{ as } \ \ve\to0 \ \mbox{ in  } \ D([0,\infty)).
% \ee
% \end{corl}

\vskip15pt
 Consider the following time-space transformation. Set
\bel{eq:space-time}
 \wt X_\ve(t):= \ve^{\frac{-\alpha}{  \alpha+\beta-1}}   X_\ve (\ve^{\frac{\alpha(1-\beta)}{ \alpha+\beta-1}}t).
 \ee
 Then
 \[
 \wt  X_\ve(t) = \ve^{\frac{-\alpha}{  \alpha+\beta-1}}   X_\ve(\ve^{\frac{\alpha(1-\beta)}{ \alpha+\beta-1}}t)=
 \]
 \[
 \ve^{\frac{-\alpha}{  \alpha+\beta-1}}   \int_0^{\ve^{\frac{\alpha(1-\beta)}{ \alpha+\beta-1}}t}
 A(X_\ve (s))ds + \ve^{1+\frac{-\alpha}{  \alpha+\beta-1}} B_\alpha(\ve^{\frac{\alpha(1-\beta)}{ \alpha+\beta-1}}t)=
 \]
 \[
 \ve^{\frac{-\alpha}{  \alpha+\beta-1}} \ve^{\frac{\alpha(1-\beta)}{ \alpha+\beta-1}}  \int_0^{t}
 A(X_\ve (\ve^{\frac{\alpha(1-\beta)}{ \alpha+\beta-1}}z))dz + \ve^{\frac{\beta-1}{  \alpha+\beta-1}} B_\alpha(\ve^{\frac{\alpha(1-\beta)}{ \alpha+\beta-1}}t)=
 \]
 \[
 \ve^{\frac{ -\alpha\beta}{  \alpha+\beta-1}}   \int_0^{t}
 A(\ve^{\frac{ \alpha}{  \alpha+\beta-1}} \wt  X_\ve (z))dz +  B^{(\ve)}_\alpha(t),
 \]
where $\{B^{(\ve)}_\alpha(t)\}:=\{\ve^{\frac{\beta-1}{  \alpha+\beta-1}} B_\alpha(\ve^{\frac{\alpha(1-\beta)}{ \alpha+\beta-1}}t)\} \overset{d}= \{B_\alpha(t)\}$ is a $d$-dimensional symmetric L\'evy $\alpha$-stable process.

We have, see   \eqref{eq:A_representation},
\bel{eq:657}
d\wt  X_\ve(t) =  \ve^{\frac{ -\alpha\beta}{  \alpha+\beta-1}}
 \left(  A_{rad}(\ve^{\frac{ \alpha}{  \alpha+\beta-1}} \wt  X_\ve (t)) +
A_{tan} (\ve^{\frac{ \alpha}{  \alpha+\beta-1}} \wt  X_\ve (t))\right)dt+ dB^{(\ve)}_\alpha(t).
\ee

Further we will skip $\ve$ in     notation $\{B^{(\ve)}_\alpha(t)\}$.

 It follows from Assumption \ref{assum:1}  that for any $x\neq 0:$
 \bel{eq:A_rad_ve}
   \ve^{\frac{ -\alpha\beta}{  \alpha+\beta-1}}    A_{rad}(\ve^{\frac{ \alpha}{  \alpha+\beta-1}} x)
 =a(\ve^{\frac{ \alpha}{  \alpha+\beta-1}} x) x^\beta \to \bar a(\frac{x}{|x|})x^\beta
=\bar a(\vf)x^\beta,\ \ve\to0;
 \ee
 \bel{eq:A_tan_ve}
  \ve^{\frac{ -\alpha\beta}{  \alpha+\beta-1}}    A_{tan}(\ve^{\frac{ \alpha}{  \alpha+\beta-1}} x) =
 {\ve^{\frac{ \alpha \gamma}{  \alpha+\beta-1}} \frac{A_{tan} (\ve^{\frac{ \alpha}{  \alpha+\beta-1}} x)}
{\left(\ve^{\frac{ \alpha}{  \alpha+\beta-1}} |x|\right)^{\beta+\gamma}} |x|^{\beta+\gamma}}\to0,\ \ve\to0
.
\ee
\begin{lem}\label{lem:3.1}
Let $\alpha+\beta>1, \alpha\in(1,2], |\beta|<1, $ and $Y_\ve(t), t\geq 0, \ve \geq 0$ be solutions to  the SDEs:
\[
dY_\ve(t)= b_\ve(Y_\ve(t))dt + d B_\alpha(t),\ t\geq 0,\ \ \  \ Y_\ve(0)=0,
\]
where $b_\ve$ are continuous functions on $\mbR^d\setminus\{0\}$.

Assume that 

1) $\lim_{\ve\to0}b_\ve(x)=b_0(x),$ uniformly on every compact that does not contain 0,

2) $\exists C>0\ \forall \ve\geq 0\ \forall x\neq 0:\ \ |b_\ve(x)|\leq C(1+|x|^\beta).$

Then 
\bel{eq:1295}
Y_\ve\Rightarrow Y_0 \ \mbox{ as } \  \ve\to0 \ \mbox{ in } \  D([0,\infty)).
\ee

In particular, we have the weak convergence of stochastic processes:
\[
\wt{X}_\ve \Rightarrow  \bar X \ \mbox{ as } \ \ve\to0 \mbox{ in  } \    D([0,\infty)),
\]
where $\bar X$ is defined in
\eqref{eq:SDE_lim0}, \eqref{eq:SDE_lim0init}.
\end{lem}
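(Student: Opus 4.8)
The plan is to treat each $Y_\ve$ as a weak solution and argue along the classical route: establish weak relative compactness of $\{Y_\ve\}$ in $D([0,\infty))$, identify every subsequential limit as a weak solution of the limit equation $dY_0=b_0(Y_0)\,dt+dB_\alpha$, and then invoke uniqueness of that equation to upgrade subsequential convergence to convergence of the whole family. Uniqueness is exactly where the standing hypothesis $\alpha+\beta>1$ is used at the end: under it the weak solution of an SDE with $\alpha$-stable noise and a drift of order $|x|^\beta$ is unique, by the results quoted in the paper (\cite{ChenWang}, cf.\ Example \ref{expl:stable}).

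First I would prove tightness. Writing $Y_\ve(t)=V_\ve(t)+B_\alpha(t)$ with $V_\ve(t)=\int_0^t b_\ve(Y_\ve(s))\,ds$ a continuous, finite-variation drift part, the growth bound (2) together with $\beta<1$ controls $V_\ve$. For $\beta\in[0,1)$ one has $|b_\ve(x)|\le C(1+|x|)$, so a Gronwall argument bounds $\sup_{t\le T}|Y_\ve(t)|$ by a constant times $1+\sup_{t\le T}|B_\alpha(t)|$ uniformly in $\ve$; since the stable process has finite moments of every order strictly below $\alpha$, this yields uniform-in-$\ve$ moment bounds on compact time intervals, hence compact containment. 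Oscillation control (via Aldous' criterion) follows because the jump part of each $Y_\ve$ is governed by the fixed stable jump measure, while the drift increment over a short interval $[\tau,\tau+\theta]$ is at most $C\int_\tau^{\tau+\theta}(1+|Y_\ve(s)|^\beta)\,ds$, small uniformly in $\ve$ by the moment bounds. (For $\beta<0$ the near-origin contribution to both estimates is absorbed into the occupation-time bound used below.) This gives tightness, and every limit point is of the form continuous finite variation plus $\alpha$-stable noise.

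Next I would identify the limit. Fix a subsequence with $Y_\ve\Rightarrow Y$ and pass, by Skorokhod representation, to a space on which the convergence is almost sure; it then suffices to show $\int_0^t b_\ve(Y_\ve(s))\,ds\to\int_0^t b_0(Y(s))\,ds$ for continuity points $t$ of $Y$, which exhibits $Y$ as a weak solution of the limit equation. The heart of the argument is the singularity of the drift at the origin. Away from $0$, assumption (1) gives uniform convergence $b_\ve\to b_0$ on compacts, and since the non-degenerate $\alpha$-stable noise prevents $Y$ from spending positive Lebesgue time at $0$, for a.e.\ $s$ one has $Y(s)\neq0$, $Y_\ve(s)\to Y(s)$ and hence $b_\ve(Y_\ve(s))\to b_0(Y(s))$. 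To turn this pointwise statement into convergence of the time integrals I must control the times at which $Y_\ve$ is within $\delta$ of the origin, i.e.\ show
\[
\lim_{\delta\to0}\ \sup_{\ve>0}\ \E\int_0^T \1_{\{|Y_\ve(s)|\le\delta\}}\,|b_\ve(Y_\ve(s))|\,ds=0 .
\]
This occupation-time estimate is the main obstacle. For $\beta\ge0$ the integrand is bounded and it reduces to smallness of the expected occupation time of the $\delta$-ball; for $\beta<0$ the drift genuinely blows up at $0$ and one must integrate the singularity $|x|^\beta$ against the occupation measure of $Y_\ve$ near the origin, which by the domination $|b_\ve(x)|\le C(1+|x|^\beta)$ is comparable near $0$ to that of the stable process. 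The condition $\alpha+\beta>1$ is precisely what makes this singularity integrable, so the near-origin contribution vanishes uniformly in $\ve$ as $\delta\to0$. Splitting the integral at the level $|Y_\ve(s)|=\delta$, using the pointwise convergence on $\{|Y_\ve(s)|>\delta\}$ and this uniform smallness on $\{|Y_\ve(s)|\le\delta\}$, and finally letting $\delta\to0$, delivers the convergence of the drift integrals.

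Finally, uniqueness of the weak solution of $dY_0=b_0(Y_0)\,dt+dB_\alpha$ forces every subsequential limit to coincide with $Y_0$, so the full family converges, $Y_\ve\Rightarrow Y_0$, proving \eqref{eq:1295}. The stated special case is obtained by choosing $b_\ve$ and $b_0$ as in \eqref{eq:A_rad_ve}, \eqref{eq:A_tan_ve}: these satisfy hypotheses (1) and (2) by Assumption \ref{assum:1}, their limit drift is $\bar a(\vf)x^\beta$, and the corresponding limit equation is \eqref{eq:SDE_lim0}, whence $\wt X_\ve\Rightarrow\bar X$.
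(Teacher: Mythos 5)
Your overall architecture (tightness of $\{Y_\ve\}$, identification of every subsequential limit as a weak solution of the limit SDE, then weak uniqueness from \cite{ChenWang} to conclude convergence of the whole family) is a legitimate strategy, but it is a genuinely different and much heavier route than the paper's. The paper's proof is essentially three lines: after localizing so that all $b_\ve$ are supported in one fixed compact, it quotes \cite[Lemma 2]{Portenko1995} for the \emph{uniform} convergence of the associated Feller semigroups, $\sup_x|\E_x f(Y_\ve(t))-\E_x f(Y_0(t))|\to0$, and then applies \cite[Theorem 2.5, Chapter 4]{EK}, which converts semigroup convergence directly into weak convergence in $D([0,\infty))$. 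No tightness argument, Skorokhod representation, or drift-integral identification is needed there; all the analytic difficulty is outsourced to the heat-kernel/semigroup results for stable processes with singular drift.

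As written, your proof has a genuine gap at exactly the step you call ``the main obstacle'': the uniform occupation-time estimate
\[
\lim_{\delta\to0}\ \sup_{\ve>0}\ \E\int_0^T \1_{\{|Y_\ve(s)|\le\delta\}}\,|b_\ve(Y_\ve(s))|\,ds=0 .
\]
You justify it by asserting that the occupation measure of $Y_\ve$ near the origin ``by the domination $|b_\ve(x)|\le C(1+|x|^\beta)$ is comparable near $0$ to that of the stable process.'' That implication is false as stated: a pointwise bound on the drift does not control how much time the perturbed process spends near a point (a drift of that size pointing toward the origin could in principle trap the process there much longer than the free stable process). What is actually needed is a Krylov-type occupation bound, uniform in $\ve$, i.e. two-sided heat-kernel estimates for $\alpha$-stable processes with Kato-class drift (available in \cite{Bogdan}, with constants depending on the drift only through its Kato norm, which hypothesis (2) together with $\alpha+\beta>1$ does make uniform in $\ve$); without invoking or proving such a result, the identification of the limit is incomplete, and the same unproved bound is also what your tightness argument leans on when $\beta<0$. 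A secondary inaccuracy: $\alpha+\beta>1$ is not ``precisely'' the integrability threshold of $|y|^\beta$ against the stable occupation density near $0$ (that would be $\alpha+\beta>0$, resp.\ $\beta>-1$ in $d=1$); its actual role is Kato-class membership, hence the heat-kernel bounds and the weak uniqueness you use in the final step.
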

\begin{proof}[Proof of Lemma \ref{lem:3.1}] 
Using localization technique and sub-linear growth of coefficients,  without loss of generality we may assume
 that
supports of all $\{b_\ve\}$ are included in the same compact.

 It follows from  \cite{Portenko1995}  that $\{Y_\ve(t)\}$ generates Feller semigroup 
and for any bounded uniformly continuous function 
 $f$ we have the uniform convergence:
 \[
 \sup_x|\E_x f(Y_\ve(t))- \E_x f(Y_0(t))|\to 0, \ \ve\to0,
 \]
 see   \cite[Lemma 2]{Portenko1995}.  
The application of \cite[Theorem 2.5, Chapter 4]{EK} yields \eqref{eq:1295}. 
\end{proof}
\begin{remk}
    Only the case $\alpha\in(1,2)$ was considered in \cite{Portenko1995}. The case $\alpha=2,$ i.e. the case when the noise is a Brownian motion, follows from the same reasoning and estimates for transition probability density functions.
\end{remk}

Let us  sketch a  proof of  Theorem \ref{thm:main_limit_small_noise} again in more details.

Recall that $\wt  X_\ve(t):= \ve^{\frac{-\alpha}{  \alpha+\beta-1}}
  X_\ve (\ve^{\frac{\alpha(1-\beta)}{ \alpha+\beta-1}}t).$

By  $\wt X_{\ve,x} $ denote a solution to \eqref{eq:657} with initial condition
 $\wt X_{\ve,x}(0)=x$ (so $\wt X_{\ve }=\wt X_{\ve,0}$.)

For $M\geq 0$  set
\[
\tau_M^\ve:= \inf\{t\geq0\ : \ |X_\ve(t)|\geq M\},
\]
\[
\tau_M^{\ve,x}:= \inf\{t\geq0\ : \ |X_{\ve,x}(t)|\geq M\},
\]
\[
\wt \tau_M^\ve:= \inf\{t\geq0\ : \ |\wt  X_\ve(t)|\geq M\},
\]
\[
\wt \tau_M^{\ve,x}:= \inf\{t\geq0\ : \ |\wt X_{\ve,x}(t)|\geq M\}.
\]

Note that
\bel{eq:3.6.-1}
\wt \tau_{M}^\ve = \ve^{\frac{-\alpha(1-\beta)}{\alpha+\beta-1} }
 \tau_{\ve^{\frac{\alpha}{\alpha+\beta-1}}M}^\ve,\ \ \      
     \wt  X_\ve(\wt \tau_{M}^\ve)= \ve^{\frac{-\alpha}{\alpha+\beta-1}} 
  X_\ve( \tau_{\ve^{\frac{\alpha}{\alpha+\beta-1}}M}^\ve).
\ee

{\it Step 1.} Let $R>0$ be a large number,  $\ve >0$ be a small number. 
We apply Lemma \ref{lem:3.1} and Theorem \ref{thm:integral_eq_growth} and show that
if $R$ is large and $\ve>0$ is small, then
\[
|\wt  X_\ve(\wt \tau_R^\ve)| \approx R \ \ \mbox{and} \ \
\frac{\wt  X_\ve(\wt \tau_R^\ve)}{|\wt  X_\ve(\wt \tau_R^\ve)|}
 \overset{d}\approx \bar\Phi(+\infty),
\]
where $\bar\Phi(+\infty)$ is defined in \eqref{eq:25.1}.

So
\bel{eq:3.6.0}
| X_\ve(\tau_{\ve^{\frac{\alpha}{\alpha+\beta-1}}R}^\ve)| \approx \ve^{\frac{ \alpha}{\alpha+\beta-1}} R
\ee
and
\bel{eq:3.6.1}
\frac{ X_\ve( \tau^\ve_{\ve^{\frac{\alpha}{\alpha+\beta-1}}R}) }{|{ X_\ve( \tau^\ve_{\ve^{\frac{\alpha}{\alpha+\beta-1}}R}) }|}
=
\frac{\wt  X_\ve( \wt \tau^\ve_{R}) }{|{\wt  X_\ve(\wt \tau^\ve_{R})}|}
\overset{d}\approx \bar \Phi(+\infty).
\ee

{\it Step 2.} 
  Let $R>0$ be a large number, $\delta>0$ be a small number, $\ve \ll \delta$.
We show that if $|x|\in[ \ve^{\frac{\alpha}{\alpha+\beta-1}}R, \delta]$,
then
\bel{eq:3.6.2}
|X_{\ve,x}(\tau_\delta^{\ve,x})|\approx \delta\ \ \mbox{and } \ \ \frac{X_{\ve,x}(\tau_\delta^{\ve,x})}{|X_{\ve,x}(\tau_\delta^{\ve,x})|}\approx \frac{x}{|x|}.
\ee
The first approximate equality in \eqref{eq:3.6.1} means that $X$ cannot exit from the ball $\{y: \ \|y
\|\leq \delta \}$
by a large jump. The second equality means that the process $X_{\ve,x}$   rotates  slightly in the set 
$\{y: \ \ve^{\frac{\alpha}{\alpha+\beta-1}}R\leq \|y\|\leq \delta\}$.  This happens because the effect of drift
there dominates the effect of the noise if $R$ is large enough. It will be seen from the proof that the effect of the drift and the noise comparable in the set 
$\{y: \ \|y\|<\ve^{\frac{\alpha}{\alpha+\beta-1}}R \}$.

It follows from \eqref{eq:3.6.1}, \eqref{eq:3.6.2}, and Corollary \ref{corl:3.2} that
\bel{eq:3.6.3}
|X_{\ve}(\tau_\delta)| \approx \delta,\ \ \ \          \frac{X_{\ve}(\tau_\delta)}{|{X_{\ve}(\tau_\delta)}|}\overset{d}\approx \bar\Phi(+\infty).
\ee

{\it Step 3.} We shall deduce from  Corollary  \ref{corl:3.2}  (see \eqref{eq:720}) and \eqref{eq:3.6.3}
that
\bel{eq:3.7.0}
X_\ve(\tau_\delta^\ve+t) \overset{d}\approx  X_{0,\delta \bar\Phi(+\infty)}(t).
\ee

{\it Step 4.} We prove that $\tau_\delta^\ve$ is small if $\ve$ and $\delta$ are small. So
\bel{eq:3.7.1}
X_\ve(\tau_\delta^\ve+t)
\approx X_\ve(t).
\ee

{\it Step 5.}
It follows from Assumption \ref{assum:2}   that
\bel{eq:3.7.2}
X_{0,\delta\bar \Phi (+\infty)}(t)
\approx X_0(t, \bar\Phi(+\infty)) 
\ee
if $\delta$ is small.

So, \eqref{eq:3.7.0}, \eqref{eq:3.7.1}, and \eqref{eq:3.7.2} yield the approximate equality
\[
X_{\ve}(t)
\overset{d}\approx X_0(t, \bar\Phi(+\infty)).
\]

\subsection{End of the proof of Theorem \ref{thm:main_limit_small_noise}}\label{subsect:proof of Th4_1}

Let $d(\xi, \eta)=d(P_\xi, P_\eta)$ be the Levy-Prokhorov metric between distributions
of random variables or random vectors $\xi$ and $\eta$.

Denote by $\mathcal{P}(P_\xi,P_\eta)$ the set of all pairs $(\tilde \xi, \tilde \eta)$ (defined maybe on different probability spaces) with marginals $P_\xi$ and $P_\eta$.
It follows from Corollary 11.6.4 \cite{Dudley} that
\bel{eq:LevyProkhDud}
d(\xi, \eta)=\inf \{ \mu>0\ :\ \exists (\tilde \xi, \tilde \eta)\in \mathcal{P}(P_\xi,P_\eta) \mbox{ such that } \Pb(|\tilde \xi- \tilde \eta|> \mu)\leq \mu\}.
\ee
%The next statement is a corollary from Theorem \ref{thm:integral_eq_growth}  and Lemma \ref{lem:3.1}.
\begin{lem}\label{lem:3.3} For any $  \mu>0 $ there exists $ R_1>0$ such that for all $ R\geq R_1:$
\bel{eq:1070}
\ba
d\left(\frac{\bar X(\bar \tau_R )}{|{\bar X(\bar \tau_R )}|} , \bar\Phi(+\infty)\right)<\mu;
\\
d\left(\frac{\bar X(\bar \tau_R )}{R} ,1 \right)<\mu;
\\
d\left(\frac{R^{1-\beta}}{\bar \tau_R } , (1-\beta) \bar a(\bar \Phi(+\infty))\right)<\mu;\ \
\ea
\ee
and there is $  \epsilon_1=\epsilon_1(R)>0$ such that for all $ \ve\in(0,\epsilon_1]:$
\bel{eq:1080}
\ba
d\left(\frac{\wt  X_\ve(\wt \tau_R^\ve)}{|{\wt  X_\ve(\wt \tau_R^\ve)}|} , \bar\Phi(+\infty)\right)<\mu;
\\
 d\left(\frac{{|\wt  X_\ve(\wt \tau_R^\ve)}|}{R} ,1 \right)<\mu;
\\
 d\left(\frac{R^{1-\beta}}{\wt \tau_R^\ve} , (1-\beta) \bar a(\bar \Phi(+\infty))\right)<\mu.
\ea
\ee
\end{lem}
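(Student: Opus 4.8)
The plan is to obtain each of the six estimates from a convergence of the relevant functional -- as $R\to\infty$ for the $\bar X$-quantities in \eqref{eq:1070} and as $\ve\to0$ for the $\wt X_\ve$-quantities in \eqref{eq:1080} -- and then invoke that the L\'evy-Prokhorov metric $d$ metrizes weak convergence, so that a.s.\ convergence (or convergence in distribution) of a functional forces the corresponding distance to tend to $0$.

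First I would establish \eqref{eq:1070}. By Theorem \ref{thm:SDE_growth}, applicable through Example \ref{expl:stable}, the solution $\bar X$ of \eqref{eq:SDE_lim0} satisfies, a.s., $|\bar X(t)|\to\infty$, $\frac{\bar X(t)}{|\bar X(t)|}\to\bar\Phi(+\infty)$, and $\bar X(t)\sim \bar X(t,\bar\Phi(+\infty))$ as $t\to\infty$, where $\bar X(t,\vf)$ is the explicit profile \eqref{eq:funcY}; in particular $|\bar X(t)|/g(t)\to1$ a.s., with $g(t):=|\bar X(t,\bar\Phi(+\infty))|=\big((1-\beta)\bar a(\bar\Phi(+\infty))t\big)^{1/(1-\beta)}$. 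Hence $\bar\tau_R<\infty$ a.s.\ and $\bar\tau_R\to\infty$ a.s.\ as $R\to\infty$. Since $g$ is continuous and increasing, the asymptotic passes to left limits, and the definition of $\bar\tau_R$ yields, for every fixed $\eta>0$ and all large $R$, the chain $(1-\eta)g(\bar\tau_R)\leq|\bar X(\bar\tau_R-)|\leq R\leq|\bar X(\bar\tau_R)|\leq(1+\eta)g(\bar\tau_R)$; letting $\eta\downarrow0$ gives $g(\bar\tau_R)/R\to1$ a.s. From this I read off the second estimate, $|\bar X(\bar\tau_R)|/R\to1$ a.s., and, inverting $g$, the third one, $R^{1-\beta}/\bar\tau_R\to(1-\beta)\bar a(\bar\Phi(+\infty))$ a.s.; the first estimate $\frac{\bar X(\bar\tau_R)}{|\bar X(\bar\tau_R)|}\to\bar\Phi(+\infty)$ a.s.\ is the angular limit evaluated along $\bar\tau_R\to\infty$. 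Each a.s.\ convergence forces $d\to0$, so taking $R_1$ past the three thresholds proves \eqref{eq:1070}.

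For \eqref{eq:1080} I would fix $R\geq R_1$ and send $\ve\to0$. By Lemma \ref{lem:3.1}, $\wt X_\ve\Rightarrow\bar X$ in $D([0,\infty))$. The three quantities are images of the path under the first-passage functionals $x\mapsto \frac{x(\tau_R(x))}{|x(\tau_R(x))|}$, $x\mapsto \frac{|x(\tau_R(x))|}{R}$ and $x\mapsto \frac{R^{1-\beta}}{\tau_R(x)}$, with $\tau_R(x)=\inf\{t:|x(t)|\geq R\}$. Granting that these functionals are continuous at $\bar X$-a.e.\ path, the continuous mapping theorem upgrades $\wt X_\ve\Rightarrow\bar X$ to convergence in distribution of each $\wt X_\ve$-quantity to the corresponding $\bar X$-quantity, whence the distance between them tends to $0$. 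Applying \eqref{eq:1070} with $\mu/2$ (which fixes $R_1$) and the triangle inequality for $d$, and choosing $\epsilon_1=\epsilon_1(R,\mu)$ small enough, then yields \eqref{eq:1080}; note that $\epsilon_1$ is permitted to depend on the fixed $R$, in agreement with the order of quantifiers in the statement.

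The \emph{main obstacle} is exactly the a.s.\ continuity of these first-passage functionals at the limit path $\bar X$, i.e.\ transferring $\wt X_\ve\Rightarrow\bar X$ to the hitting times $\wt\tau_R^\ve$ and the overshoot positions $\wt X_\ve(\wt\tau_R^\ve)$. The sufficient condition is that $\bar X$ crosses the sphere $\{|x|=R\}$ cleanly: $\bar\tau_R=\inf\{t:|\bar X(t)|>R\}$ a.s.\ and $|\bar X|$ strictly exceeds $R$ immediately after $\bar\tau_R$, with zero occupation time of $\{|x|=R\}$. For $\alpha=2$ this follows from path continuity of $\bar X$ together with the strong Markov property; for $\alpha\in(1,2)$ it follows because a symmetric $\alpha$-stable process does not creep over levels, so $|\bar X(\bar\tau_R)|>R$ a.s.\ (the sphere is crossed by a jump). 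A Skorokhod-representation argument, realizing $\wt X_\ve\to\bar X$ a.s.\ in the $J_1$ topology, then gives $\wt\tau_R^\ve\to\bar\tau_R$ and $\wt X_\ve(\wt\tau_R^\ve)\to\bar X(\bar\tau_R)$ a.s., which is precisely the continuity needed to run the continuous mapping step above.
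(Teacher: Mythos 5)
Your proposal follows essentially the same route as the paper's (much terser) proof: \eqref{eq:1070} is read off from the a.s.\ asymptotics supplied by Theorem \ref{thm:SDE_growth} via Example \ref{expl:stable}, and \eqref{eq:1080} is deduced from the weak convergence $\wt X_\ve\Rightarrow\bar X$ of Lemma \ref{lem:3.1}; the paper states this in two lines and leaves the continuity of the first-passage functionals implicit, which you rightly identify as the real content of the second half. One local flaw: your justification of clean crossing for $\alpha\in(1,2)$ via the non-creeping property of symmetric stable processes does not apply to $\bar X$, which is the stable noise \emph{plus} a strictly outward drift, so the sphere $\{|x|=R\}$ can perfectly well be crossed continuously and the conclusion $|\bar X(\bar\tau_R)|>R$ a.s.\ is not warranted. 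The fact you actually need -- that $\inf\{t:|\bar X(t)|\geq R\}=\inf\{t:|\bar X(t)|>R\}$ a.s.\ and $|\bar X|$ exceeds $R$ immediately afterwards -- is instead a consequence of the strictly positive radial drift $\bar a(\vf)R^{\beta}>0$ on the sphere (combined with the strong Markov property), an argument that works uniformly for all $\alpha\in(1,2]$ and makes the jump/no-jump case split unnecessary.
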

\begin{proof}
It follows from Theorems \ref{thm:integral_eq_growth} and \ref{thm:SDE_growth} (see Example \ref{expl:stable})
that 
\newline
$\frac{\bar X(t )}{|{\bar X(t)}|} \to \bar\Phi(+\infty), \ t\to\infty$ a.s. and
$|\bar X(t )| \sim ((1-\beta)  \bar a(\bar \Phi(+\infty)) t)^{\frac{1}{1-\beta}}, \ t\to\infty$  a.s.

This yields \eqref{eq:1070}.
Equations \eqref{eq:1080}  follows from \eqref{eq:1070} and Lemma \ref{lem:3.1}.
\end{proof}
The next lemma is the most important part of the proof. Informally, the statement means  that 
if $\ve$, $\delta$ are small, $R$ is large  and $|x|\geq \ve^{\frac{\alpha}{\alpha+\beta-1}}R$ (see Step  2 above), then 
\eqref{eq:3.6.2} holds true, i.e.,  the process can't exit from the ball 
$\{y\ : |y|\leq \delta\}$ by a large jump and the radius at the instant of exit  approximately
equals $\delta;$ moreover, the polar angle at the instant of exit approximately equals initial polar angle $\frac{x}{|x|}$. 
We also will prove  that the exit time $\tau_\delta^\ve$ is small if $\ve$ and $\delta$ are small (see Step  4 above).
\begin{lem}\label{lem:3.4}
\begin{multline*}
    \forall \mu>0\   \exists \delta_2=\delta_2(\mu)>0\ \exists R_2=R_2(\mu)>0\
 \exists \epsilon_2=\epsilon_2(\mu)>0%\  \exists \gamma_2=\gamma_2(\mu)>0
\\
\forall R>R_2, \ \delta\in(0,\delta_2),\  \ve\in (0,\epsilon_2) \
 \mbox{such that }\ R \ve^{\frac{\alpha}{\alpha+\beta-1}}<\delta, 
 \\
 \ \forall x\in\mbR^d\
 \mbox{such that } \ R \ve^{\frac{\alpha}{\alpha+\beta-1}}<|x|<\delta :
 %\mbox{and}\ \left| \frac{|x|}{R\ve^{\frac{\alpha}{\alpha+\beta-1}}}-1\right|<\delta_2:
\end{multline*}
\bel{eq:lem3.5.1}
\Pb\left(\left|
\frac{X_{\ve,x}(\tau_\delta^{\ve,x})}{|X_{\ve,x}(\tau_\delta^{\ve,x})|} -\frac{x}{|x|}\right|>\mu\right)<\mu;
\ee
\bel{eq:lem3.5.2}
\Pb\left(\left|
\frac{|X_{\ve,x}(\tau_\delta^{\ve,x})|}{\delta} -1\right|>\mu\right)<\mu;
\ee
\bel{eq:lem3.5.3}
\Pb\left(\left|
\tau_\delta^{\ve,x}\right|>\mu\right)<\mu.
\ee
\end{lem}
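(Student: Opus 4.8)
The plan is to pass, via the rescaling \eqref{eq:space-time}, from the exit of $X_{\ve,x}$ out of the small ball $\{|y|\le\delta\}$ to the exit of the rescaled process out of a \emph{large} ball, and then to read off \eqref{eq:lem3.5.1}--\eqref{eq:lem3.5.3} from the large-radius behaviour supplied by Theorem \ref{thm:integral_eq_growth}. Write $\vk:=\frac{\alpha}{\alpha+\beta-1}$, so that $\frac{\alpha(1-\beta)}{\alpha+\beta-1}=(1-\beta)\vk$, and put $\wt x:=\ve^{-\vk}x$ and $M:=\ve^{-\vk}\delta$. Exactly as in \eqref{eq:3.6.-1} one has $\tau_\delta^{\ve,x}=\ve^{(1-\beta)\vk}\,\wt\tau_M^{\ve,\wt x}$ and $X_{\ve,x}(\tau_\delta^{\ve,x})=\ve^{\vk}\,\wt X_{\ve,\wt x}(\wt\tau_M^{\ve,\wt x})$; in particular the polar angle and the relative radius $|X_{\ve,x}(\tau_\delta^{\ve,x})|/\delta=|\wt X_{\ve,\wt x}(\wt\tau_M^{\ve,\wt x})|/M$ are invariant under the rescaling. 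Since $R\ve^{\vk}<|x|<\delta$ becomes $R<|\wt x|<M$, all three claims reduce to statements about the rescaled solution $\wt X_{\ve,\wt x}$, which solves \eqref{eq:657} with additive noise $B^{(\ve)}_\alpha\overset{d}=B_\alpha$ and drift $b_\ve(y)=\ve^{-\vk\beta}A(\ve^{\vk}y)$ converging, by \eqref{eq:A_rad_ve}--\eqref{eq:A_tan_ve}, to $\bar a(\vf)y^\beta$ uniformly on the relevant annuli as $\ve\to0$, started at radius $|\wt x|\ge R$ and observed until it leaves radius $M$.

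Fix $\ve'\in(0,\mu)$. Because the noise in \eqref{eq:657} is additive, the driving path $\xi(t)=B^{(\ve)}_\alpha(t)$ is common to all starting points and has the law of $B_\alpha$; choosing $\alpha'\in(1-\beta,\alpha)$ in Khintchine's law \eqref{eq:Khintchine} (possible since $\alpha+\beta>1$) yields, for a suitable $\eta>0$ and constant $c$, an event $\Omega_{\ve'}$ with $\Pb(\Omega_{\ve'})\ge1-\ve'$ on which the deterministic bound $|\xi(t)|\le c(1+t^{\frac1{1-\beta}-\eta})$ holds for all $t$, uniformly in $\ve$ and in $\wt x$. On $\Omega_{\ve'}$ I would first control the radius $\rho(t):=|\wt X_{\ve,\wt x}(t)|$: since $A_{tan}$ is orthogonal to $\wt X$, the drift of $\rho^2$ comes only from the radial part, so $\rho$ obeys a one-dimensional comparison of the same type as in Theorem \ref{thm:integral_eq_growth}, and together with the noise bound this gives $\rho(t)\sim\big((1-\beta)\bar a(\wt\vf(+\infty))\,t\big)^{\frac1{1-\beta}}$ as $t\to\infty$, where the a.s.\ angular limit $\wt\vf(+\infty)$ is produced by the angular estimate below. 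Consequently $\wt\tau_M^{\ve,\wt x}\sim\frac{M^{1-\beta}}{(1-\beta)\bar a(\wt\vf(+\infty))}$, whence $\tau_\delta^{\ve,x}=\ve^{(1-\beta)\vk}\wt\tau_M^{\ve,\wt x}\sim\frac{\delta^{1-\beta}}{(1-\beta)\bar a(\wt\vf(+\infty))}\le\frac{\delta^{1-\beta}}{(1-\beta)\min_{S^{d-1}}\bar a}(1+o(1))$, which is $<\mu$ once $\delta<\delta_2$ and $\ve<\epsilon_2$; this proves \eqref{eq:lem3.5.3}. For \eqref{eq:lem3.5.2} I bound the overshoot: the only jump of $\wt X$ at time $\wt\tau_M^{\ve,\wt x}$ is the jump of $\xi$ there, of size $\le2c(1+(\wt\tau_M^{\ve,\wt x})^{\frac1{1-\beta}-\eta})=o(M)$, so $1\le|\wt X(\wt\tau_M^{\ve,\wt x})|/M\le1+o(1)$, giving \eqref{eq:lem3.5.2} for $\delta,\ve$ small.

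The heart of the matter is the angular estimate \eqref{eq:lem3.5.1}, where Theorem \ref{thm:integral_eq_growth} cannot be quoted verbatim: in the rescaled picture the tangential drift is only $o(\rho^\beta)$ rather than $O(\rho^{\beta-\gamma})$, so I would instead bound the total angular variation of $\wt\vf(t)=\wt X(t)/|\wt X(t)|$ directly. The radial drift leaves $\wt\vf$ unchanged, so $d\wt\vf$ is driven only by the tangential drift and by the tangential projection of the noise. Using the radius as a clock, $ds\approx d\rho/(\bar a\rho^\beta)$, the tangential-drift contribution transforms into $\tfrac1{\bar a}\int_{\ve^{\vk}R}^{\delta}\epsilon(u)\,u^{\gamma-1}\,du\le\tfrac1{\bar a}\int_0^{\delta}\epsilon(u)\,u^{\gamma-1}\,du=:h(\delta)$, where $\epsilon(u)\to0$ encodes \eqref{eq:A_t_0}; since $\gamma>0$ this integral is finite and $h(\delta)\to0$ as $\delta\to0$, uniformly in $\ve$ and $R$. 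The noise contribution $\int_0^{\wt\tau_M}|\wt X(s)|^{-1}\,d\xi(s)$ I would estimate, on $\Omega_{\ve'}$, by integration by parts together with $|\xi(t)|\le c(1+t^{\frac1{1-\beta}-\eta})$ and the lower bound $\rho(t)\gtrsim(t+R^{1-\beta})^{\frac1{1-\beta}}$; the growth of $\rho$ makes the noise relatively negligible and yields a bound $g(R)\to0$ as $R\to\infty$, uniform in small $\ve$. Hence on $\Omega_{\ve'}$ the total angular variation is $\le h(\delta)+g(R)$, so $\wt\vf(+\infty)$ exists and the exit angle, the initial angle $x/|x|=\wt x/|\wt x|$ and $\wt\vf(+\infty)$ all lie within $h(\delta)+g(R)$ of one another, which is $<\mu$ for $\delta<\delta_2$ and $R>R_2$.

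Finally, for the above choices of $\delta_2,R_2,\epsilon_2$ each exceptional event in \eqref{eq:lem3.5.1}--\eqref{eq:lem3.5.3} is contained in $\Omega_{\ve'}^{\,c}$, so every probability there is at most $\Pb(\Omega_{\ve'}^{\,c})\le\ve'<\mu$. The main obstacle is the uniform control of the noise-driven angular displacement $\int_0^{\wt\tau_M}|\wt X(s)|^{-1}\,d\xi(s)$: it is an integral against a jump process whose modulus must be dominated simultaneously for all small $\ve$, all starting radii $\ge R$, and up to the random, $\ve$-dependent horizon $\wt\tau_M$, which is precisely where the largeness of $R$ (equivalently, the condition $|x|>R\ve^{\vk}$) enters.
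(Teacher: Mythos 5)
Your proposal is correct and follows essentially the same route as the paper: the space--time rescaling \eqref{eq:space-time} together with the identities \eqref{eq:1498}--\eqref{eq:1499}, subtraction of the additive noise controlled via Khintchine's bound \eqref{eq:Khintchine}, the radial comparison of Lemma \ref{lem:4.1} for \eqref{eq:lem3.5.2}--\eqref{eq:lem3.5.3}, and the total-angular-variation estimate of Lemma \ref{lem:4.2} for \eqref{eq:lem3.5.1}. The only added value is that you make explicit the point the paper leaves implicit, namely that the rescaled tangential drift satisfies a different bound than \eqref{eq:A_t_infty} and must be integrated in the original radial variable, yielding the $\int_0^{\delta}\epsilon(u)u^{\gamma-1}\,du\to0$ control as $\delta\to0$.
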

As for the proof see the Appendix.

Let $R>0$ be a fixed (large) number. It follows from equality \eqref{eq:3.6.-1} 
and the strong Markov property
\begin{multline*}
    d\left(
\frac{X_{\ve}(\tau_\delta^\ve)}{|X_{\ve}(\tau_\delta^\ve)|}, \bar\Phi(+\infty) \right)\leq \\
d\left(\frac{X_{\ve}(\tau_\delta^\ve)}{|X_{\ve}(\tau_\delta^\ve)|}, \frac{X_\ve( \tau_{\ve^{\frac{\alpha}{\alpha+\beta-1}}R}^\ve)}{|X_\ve( \tau_{\ve^{\frac{\alpha}{\alpha+\beta-1}}R}^\ve)|} \right)
+d\left(\frac{X_\ve( \tau_{\ve^{\frac{\alpha}{\alpha+\beta-1}}R}^\ve)}{|X_\ve( \tau_{\ve^{\frac{\alpha}{\alpha+\beta-1}}R}^\ve)|}, \bar\Phi(+\infty) \right)
=\\
d\left(\frac{X_{\ve}(\tau_\delta^\ve)}{|X_{\ve}(\tau_\delta^\ve)|}, \frac{X_\ve( \tau_{\ve^{\frac{\alpha}{\alpha+\beta-1}}R}^\ve)}{|X_\ve( \tau_{\ve^{\frac{\alpha}{\alpha+\beta-1}}R}^\ve)|} \right)
+d\left(\frac{\wt  X_\ve(\wt \tau_{R}^\ve)}{|\wt  X_\ve(\wt \tau_{R}^\ve)|}, \bar\Phi(+\infty) \right)=
\\
d\left(\frac{\hat X_{\ve,x}(\tau_\delta^{\ve,x})}{|\hat X_{\ve,x}(\tau_\delta^{\ve,x})|}\Big|_{x=X_\ve( \tau_{\ve^{\frac{\alpha}{\alpha+\beta-1}}R}^\ve)} , \frac{X_\ve( \tau_{\ve^{\frac{\alpha}{\alpha+\beta-1}}R}^\ve)}{|X_\ve( \tau_{\ve^{\frac{\alpha}{\alpha+\beta-1}}R}^\ve)|} \right)
+d\left(\frac{\wt  X_\ve(\wt \tau_{R}^\ve)}{|\wt  X_\ve(\wt \tau_{R}^\ve)|}, \bar\Phi(+\infty) \right),
  \end{multline*}
where $\hat X_{\ve,x}\overset{d}=  X_{\ve,x},$ $\hat X_{\ve,x}$ is independent of $X_\ve( \tau_{\ve^{\frac{\alpha}{\alpha+\beta-1}}R}^\ve).$

 Lemmas \ref{lem:3.3} and \ref{lem:3.4}
imply
\[
\forall \mu>0\ \exists \delta_3=\delta_3(\mu)>0 \ 
\forall \delta\in(0,\delta_3] \ \exists \epsilon_3=\epsilon_3(\mu,\delta)>0\ \forall \ve\in (0, \epsilon_3]:
\]
\bel{eq:3.9.1}
d\left(
\frac{X_{\ve}(\tau_\delta^\ve)}{|X_{\ve}(\tau_\delta^\ve)|}, \bar\Phi(+\infty) \right)<\mu,
\ee
and similarly,
\bel{eq:3.9.2}
\Pb\left(
\frac{|X_{\ve}(\tau_\delta^\ve)|}{\delta}> 1+\mu\right)<\mu;
\ee
\bel{eq:3.9.3}
\Pb\left( \tau_\delta^\ve>\mu\right)<\mu.
\ee

To prove  Theorem \ref{thm:main_limit_small_noise} it is sufficient to verify that any sequence $\{X_{\ve_n}\}$ contains a subsequence $\{X_{\ve_{n_k}}\}$ such that
\[
X_{\ve_{n_k}}(\cdot)\Rightarrow  X_0(\cdot, \bar \Phi(+\infty)),\ k\to\infty.
\]
It follows 
from Lemma \ref{lem:3.5} that $\{X_\ve\} $ is relatively compact. So  without loss of generality we will assume that $\{X_{\ve_n}\}$ is already convergent.

% \begin{lem}\label{lem:3.6}
%  Let $\{ \zeta_n\}$ be a sequence of random elements with values in a complete separable metric space $(E,\rho)$.
%  Then
%  \[
%  \zeta_n\Rightarrow \zeta_0, n\to\infty
%  \]
%  if and only if
%  \[
%  \forall \mu>0\ \ \exists \tilde \zeta_0\overset{d}=\zeta_0 \ \
% \exists n_0\ \ \forall n\geq n_0\  \ \exists \tilde \zeta_n\overset{d}=\zeta_n
%  :
%  \]
%  \[
%  \Pb\left( \rho(\tilde \zeta_n, \tilde \zeta_0)\geq \mu\right)\leq \mu.
%  \]
% \end{lem}
% The ``if''  statement follows from
% Theorem 11.3.5 \cite{Dudley}, the
%   ``only if'' follows from the Skorokhod theorem on a joint probability space.

Let $\mu>0$ be fixed. Select $\delta_3=\delta_3(\mu)$
from \eqref{eq:3.9.1} -- \eqref{eq:3.9.3}, and for fixed $\delta\in (0,\delta_3)$ select
$ \epsilon_3=\epsilon_3(\mu,\delta)$.
Let $n_3=n_3(\mu)=n_3(\mu,\delta)$ be such that  $\ve_n <\epsilon_3(\mu)$ for any $n\geq n_3$.

It follows from \eqref{eq:LevyProkhDud} 
that for any $n\geq n_3$ there are   copies
$\hat \Phi(+\infty) \overset{d}= \bar \Phi(+\infty)$ and
$\hat \zeta_n \overset{d}=
 \frac{  X_{\ve_n}(  \tau_\delta^{\ve_n})}{|  X_{\ve_n}(  \tau_\delta^{\ve_n})|}$
 defined on some probability space such that for all $n\geq n_3$
%$\hat X_{\ve_n}\overset{d}=  X_{\ve_n} $ such that for all $n\geq n_3$
\bel{eq:1655}
\Pb\left(\left|
\hat \zeta_n -    \hat \Phi(+\infty)     \right|> 2\mu \right) < 2\mu.
\ee
\begin{lem}\label{lem:coupling}
    Let $(\xi_n,\eta_n), n\geq 1,$ be a sequence of  random elements defined in possibly  different probability spaces. Assume that
    
    1) all $\xi_n$ take values in a complete separable metric space $G$ and have same distribution,

    2) $\eta_n$ takes values in a complete separable metric space $G_n, n\geq 1.$

    Then there are random elements   $\wt \xi, \wt \eta_n, n\geq 1,$ defined on the same probability space such that $(\xi_n,\eta_n) \overset{d}{=} (\wt \xi, \wt \eta_n), n\geq 1.$
\end{lem}
\begin{proof}
   We will give a proof for completeness, although assertions of this type are well known in coupling theory, cf. \cite{Thorisson}. Denote by
   $P_\xi(dx)$ the distribution of $\xi$ in $E$ and 
by $P_n(x,dy_n)$ the regular conditional distribution of $\eta_n$ given $\xi_n=x.$ Since all metric spaces are assumed to be complete and separable, the regular conditional distributions exist.

Consider the product-space  $E\times G_1\times G_2\times\dots$ with   Borel $\sigma-$algebra and define the probability measure as
 \[
 P(dx, d y_1,dy_2,\dots)=P_\xi(dx)P_1(x,dy_1)P_n(x,dy_2)\dots.
 \]
 Then the coordinate random elements $\wt \xi:=x, \wt \eta_n:=y_n, n\geq 1,$ satisfy conditions of the lemma.
\end{proof}
 
Using Lemma \ref{lem:coupling} 
we can construct processes  copies $\hat X_{\ve_n}$ on a joint probability space such that
$\hat \zeta_n =  \frac{\hat X_{\ve_n}(\hat \tau_\delta^{\ve_n})}{|\hat X_{\ve_n}(\hat \tau_\delta^{\ve_n})|} $
a.s. and \eqref{eq:1655} holds true.

Therefore, \eqref{eq:3.9.1} -- \eqref{eq:3.9.3} yield
\bel{eq:3.10.1}
\Pb\left(\left|  \frac{\hat X_{\ve_n}(\hat \tau_\delta^{\ve_n})}{|\hat X_{\ve_n}(\hat \tau_\delta^{\ve_n})|} -    \hat \Phi(+\infty)     \right|> 2\mu \right) < 2\mu,
\ee
\bel{eq:3.10.2}
\Pb\left(  \frac{|\hat X_{\ve_n}(\hat \tau_\delta^{\ve_n})|}{\delta}> 1+\mu \right) < \mu,
\ee
\bel{eq:3.10.3}
\Pb\left(\hat \tau_\delta^{\ve_n}> \mu \right) < \mu.
\ee

We have
\bel{eq:3.11.0}
\sup_{t\in[0,T]}\left| \hat X_{\ve_n}(t) - X_0(t,\hat \Phi (+\infty))\right|\leq
\sup_{t\in[0, \hat \tau_\delta^{\ve_n}]}\left| \hat X_{\ve_n}(t)\right|
+\sup_{t\in[0, \hat \tau_\delta^{\ve_n}]}\left| X_0(t,\hat \Phi (+\infty))\right|+
\ee
\[
\sup_{t\in[0,T]}\left| \hat X_{\ve_n}(\hat \tau_\delta^{\ve_n}+t) - X_0(\hat \tau_\delta^{\ve_n}+t,\hat \Phi (+\infty))\right|=
\]
\[
\left| \hat X_{\ve_n}( \hat \tau_\delta^{\ve_n})\right|
+\left| X_0( \hat \tau_\delta^{\ve_n},\hat \Phi (+\infty))\right|+
\sup_{t\in[0,T]}\left| \hat X_{\ve_n}(\hat \tau_\delta^{\ve_n}+t) - X_0(\hat \tau_\delta^{\ve_n}+t,\hat \Phi (+\infty))\right|=: I_1^{n,\delta} + I_2^{n,\delta}+ I_3^{n,\delta}.
\]
It follows from \eqref{eq:3.9.2} that
\bel{eq:3.11.1}
\forall n\geq n_3\ % \forall \delta \in (0, \delta_3(\mu))
:\ \ \ \Pb\left( I_1^{n,\delta}> \delta(1+\mu)\right)< \mu.
\ee
It is easy to see that there exists $K>0$ such that $|X_0(t,\vf)|\leq K t^{\frac{1}{1-\beta}}$
 for small $t\geq 0$, where $K$ is independent of  $\vf.$ So,
\bel{eq:3.11.1.1}
\Pb( I_2^{n,\delta}\geq K\mu^{\frac{1}{1-\beta}}) =
\Pb\left(\left| X_0( \hat \tau_\delta^{\ve_n},\hat \Phi (+\infty))\right| > K\mu^{\frac{1}{1-\beta}}\right) \leq
\Pb\left( \hat \tau_\delta^{\ve_n} > \mu\right)<\mu.
\ee
The last inequality follows from \eqref{eq:3.10.3}.

Consider $I_3^{n,\delta}$. By the strong Markov property we have

{
\[
I_3^{n,\delta}\overset{d}=
\sup_{t\in[0,T]}\left|X_{\ve_n,y}(t)|_{y=\hat X_{\ve_n}(\hat \tau_\delta^{\ve_n})} -
 X_{0,x}(t)|_{x=X_{0}(\hat \tau_\delta^{\ve_n},\hat \Phi (+\infty))}\right|,
\]
where $\{X_{\ve_n,y}(t)\}$ is independent of $\hat X_{\ve_n}(\hat \tau_\delta^{\ve_n})$.
\newline
It follows from Corollary \ref{corl:3.2}, Remark \ref{remk:1182} and Lemma \ref{lem:3.4} that 
\bel{eq:1650}
\Pb(I_3^{n,\delta}> \mu)< \mu
\ee
 if $\delta$ is sufficiently small and $n$ is sufficiently large.
Let $\mu>0$ be fixed. Combining estimates \eqref{eq:3.11.0}, \eqref{eq:3.11.1}, \eqref{eq:1650} we 
}
can select sufficiently small $\delta=\delta(\mu)>0$  and  $N=N(\delta,\mu)$ such that 
 \[
\forall n\geq N:  \Pb\left( \sup_{t\in[0,T]}\left| \hat X_{\ve_n}(t) - X_0(t,\hat \Phi(+\infty))\right|> 100\mu + K\mu^{\frac{1}{1-\beta}}\right)< 100\mu.
\]
The last display yields the uniform convergence in probability
\[
\sup_{t\in[0,T]}\left| \hat X_{\ve_n}(t) - X_0(t,\hat \Phi(+\infty))\right|\overset{\Pb}\to0, n\to\infty.
\]
Since $\hat X_{\ve_n}\overset{d}= X_{\ve_n}, \ X_0(t,\hat \Phi(+\infty))\overset{d}=X_0(t,  \Phi(+\infty))$
this concludes the proof of 
 Theorem \ref{thm:main_limit_small_noise}.

\section{Appendix}

\begin{proof}[  Proof of Theorem \ref{thm:integral_eq_growth}]
Let $\{Z(t)\}$ be a solution to \eqref{eq:integral_eq}. Set $\tilde Z(t):=Z(t)-\xi(t).$
Then
\bel{eq:4.1.0}
%\[
 d\tilde Z(t) = A(\tilde Z(t) +\xi(t))dt=A(Z(t))dt,\ t\geq 0.
 %\]
\ee
Assume for a while that $\tilde Z(t)\neq 0, t\geq 0.$
Then for $\tilde r(t)=|\tilde Z(t)|,\ r(t)=|Z(t)|  $ we have:
\[
d\tilde r^{1-\beta}(t)= (1-\beta) \tilde r^{-\beta}(t)
\frac{\lg \tilde Z(t),  A( Z(t))\rg}
{|\tilde Z(t)|} dt = (1-\beta) \tilde r^{-\beta}(t)
\frac{\lg \tilde Z(t),  A_{rad}( Z(t))+A_{tan}( Z(t))\rg}
{|\tilde Z(t)|} dt=
\]
\bel{eq:4.1.2}
(1-\beta) \tilde r^{-\beta-1}(t)  
{\lg \tilde Z(t),   a(Z(t))\frac{Z(t)}{|{Z(t)}|}\rg} 
 dt+  
(1-\beta) \tilde r^{-\beta-1}(t)
{\lg \tilde Z(t),  A_{tan}( Z(t))\rg}
 dt.
\ee
%  
% \bel{eq:4.1.2}
% \ba
% (1-\beta) \tilde r^{-\beta-1}(t)
% {\lg \tilde Z(t),   \tilde Z(t)+\xi(t)\rg} a(\tilde Z(t)+\xi(t))
%  dt + \\
% (1-\beta) \tilde r^{-\beta-1}(t)
%  {\lg \tilde Z(t),  A_{tan}(\tilde Z(t)+\xi(t))\rg}
%  dt.
% \ea
% \ee
It follows from \eqref{eq:298} that %  Assumption \ref{assum:1} of the Theorem that
\[
\exists R_{rad}>0\  \exists C_{rad}>0 \ \forall x, \ |x|\geq R_{rad}\ :\ \
a(x)\geq C_{rad} |x|^\beta.
\]
 Suppose that for some $T>0$ we have
\bel{eq:assumption}
|\xi(t)|\leq \frac{|\tilde Z(t)|}{2}  \ \mbox{ and }
 \ |\tilde Z(t)| \geq 2(R_{rad}\vee R_{tan}) \mbox{ for } t\in [0,T],
\ee
where $R_{tan}$ is from \eqref{eq:A_t_infty}.
 Then 
\[
|Z(t)|/2\leq |\tilde Z(t)|\leq 2 |Z(t)|, \ \ \lg Z(t), \tilde Z(t) \rg\geq 
|\tilde Z(t)|/2, \ , \ t\in [0,T],
\] 
 and assumptions of Theorem \ref{thm:integral_eq_growth} yield the inequality
\bel{eq:4.2.17}
d\tilde r^{1-\beta}(t)\geq 
C_1 \tilde r^{-\beta-1}(t)
|\tilde Z(t)| C_{rad} |\tilde Z(t)|^\beta
 dt  - C_2 
 \tilde r^{-\beta-1}(t)
 {|\tilde Z(t)| C_{tan} |\tilde Z(t)|^{\beta-\gamma}}
 dt=
\ee
\[
C_1   C_{rad}  
 dt  - C_2 C_{tan} 
\tilde r^{- \gamma} dt,\   \ t\in [0,T],
\]
 where $C_1, C_2$ are some constants.
 
Thus, there are constants  $R_1=R_1(A)\geq R_{rad}\vee R_{tan},\  K=K(A, R_1)>0$, which are independent of $T$, and such that
\bel{eq:4.2.1}
  \forall r(0)\geq R_1\ :
 \ \  \tilde r^{1-\beta}(t)\geq \tilde r^{1-\beta}(0)+
Kt,\  t\in [0,T].
\ee
\begin{lem}\label{lem:4.1}
Suppose that assumptions of Theorem  \ref{thm:integral_eq_growth}
are satisfied. Then there is $R_0$ such that \eqref{eq:assumption}
and \eqref{eq:4.2.1} are true for any $T> 0 $ whenever $|\tilde Z(0)|> R_0$.

% Assume that \eqref{eq:A_r} and \eqref{eq:estim_noise} are satisfied, $\xi(0)=0,$ and
% \bel{eq:4.2.2}
%   A:=\inf_{|x|\geq r_0} a(x)>0\ \ \mbox{for some }\ \ r_0>0.
% \ee
% Then
% \[
% \exists c>0\ \exists R_0\ \forall x, \ |x|\geq R_0 \ \mbox{and any solution } \ Z_x(t):
% \]
% \bel{eq:Zx}
% |Z_x(t)|\geq c t^{\frac{1}{1-\beta}}.
% \ee
\end{lem}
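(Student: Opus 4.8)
The plan is to convert the conditional estimate \eqref{eq:4.2.17} into an unconditional a priori lower bound by a continuation (open--closed) argument, once the initial radius is large. Write $p=\tfrac{1}{1-\beta}$, let $\rho:=|\tilde Z(0)|$, and introduce the deterministic envelope $m(t):=(\rho^{1-\beta}+Kt)^{p}$, where $K=K(A,R_1)>0$ is the constant produced just before \eqref{eq:4.2.1}. The observation that organizes the proof is that both conclusions \eqref{eq:assumption} and \eqref{eq:4.2.1} follow from the single claim $|\tilde Z(t)|\ge m(t)$ for all $t\ge 0$: indeed, for the given starting radius \eqref{eq:4.2.1} is literally the inequality $\tilde r(t)\ge m(t)$ (since $x\mapsto x^{p}$ is increasing for $1-\beta>0$), and once this holds the estimate $(\star)$ below returns \eqref{eq:assumption}. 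Thus it suffices to establish the claim.

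First I would isolate the purely deterministic comparison driving everything. Since $m(t)\ge(Kt)^{p}$ grows like $t^{p}$ while the bound \eqref{eq:estim_noise} grows only like $t^{p-\delta}$ with $\delta>0$, the exponent gap gives a crossover time $t_1$, depending on $C_\xi,\delta,\beta,K$ but not on $\rho$, beyond which $C_\xi(1+t^{p-\delta})\le\tfrac14 m(t)$; on the bounded range $t\le t_1$ the right-hand side of \eqref{eq:estim_noise} is bounded, so the floor $m(t)\ge m(0)=\rho$ suffices when $\rho$ is large. Choosing $R_0\ge R_1$, $R_0\ge 2(R_{rad}\vee R_{tan})$, and $R_0\ge 4C_\xi(1+t_1^{\,p-\delta})$, one obtains for every $\rho\ge R_0$ the estimate
\begin{equation*}
C_\xi\bigl(1+t^{\,p-\delta}\bigr)\le\tfrac14\,m(t)\quad\text{and}\quad m(t)\ge\rho\ge 2\bigl(R_{rad}\vee R_{tan}\bigr)\qquad\forall\,t\ge 0.\tag{$\star$}
\end{equation*}
Given the claim, $(\star)$ yields $|\tilde Z(t)|\ge m(t)\ge 2(R_{rad}\vee R_{tan})$ and $|\xi(t)|\le C_\xi(1+t^{p-\delta})\le\tfrac14 m(t)\le\tfrac12|\tilde Z(t)|$, which is exactly \eqref{eq:assumption}.

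To prove the claim I would set $\tau:=\inf\{t\ge0:|\tilde Z(t)|<m(t)\}$ and suppose $\tau<\infty$. On $[0,\tau)$ we have $|\tilde Z(t)|\ge m(t)$, so $(\star)$ shows \eqref{eq:assumption} holds there; hence the radial computation \eqref{eq:4.1.2}--\eqref{eq:4.2.17} is legitimate ($\tilde Z$ does not vanish and $\tilde r\ge\rho\ge R_1$ keeps the radial rate at least $K$), so $t\mapsto\tilde r^{1-\beta}(t)-(\rho^{1-\beta}+Kt)$ is nondecreasing from the value $0$, giving $\tilde r(t)\ge m(t)$ on $[0,\tau)$. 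By continuity of $\tilde Z$ this passes to $\tau$, so $|\tilde Z(\tau)|\ge m(\tau)\ge\rho>2(R_{rad}\vee R_{tan})$. Continuity then provides $\epsilon>0$ with $|\tilde Z(t)|\ge\tfrac12 m(t)$ and $|\tilde Z(t)|>2(R_{rad}\vee R_{tan})$ on $[\tau,\tau+\epsilon)$; combined with the $\tfrac14$ margin in $(\star)$ (so that $|\xi(t)|\le\tfrac14 m(t)\le\tfrac12|\tilde Z(t)|$), this re-verifies \eqref{eq:assumption} on $[\tau,\tau+\epsilon)$, and the monotonicity argument extends $\tilde r(t)\ge m(t)$ to $[0,\tau+\epsilon)$, contradicting the definition of $\tau$. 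Hence $\tau=+\infty$, and with the claim both \eqref{eq:assumption} and \eqref{eq:4.2.1} hold for every $T>0$.

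The step I expect to require the most care is this feedback: \eqref{eq:4.2.1} is \emph{derived} from \eqref{eq:assumption}, yet \eqref{eq:assumption} is exactly what must be propagated forward, so a naive induction is circular. The device breaking the loop is the strict $\tfrac14$ margin in $(\star)$ together with the fact that only the \emph{continuous} part $\tilde Z$, not the possibly discontinuous $\xi$, has to be tracked across the critical time $\tau$—the deterministic envelope $m$ dominates every jump of $\xi$ uniformly. The sole genuinely quantitative input is the exponent gap $p-\delta<p$ in \eqref{eq:estim_noise}, which makes the crossover time $t_1$ independent of the starting radius and thereby lets a single threshold $R_0=R_0(A,\delta,\beta,C_\xi)$ serve all starting points.
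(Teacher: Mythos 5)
Your proof is correct and follows essentially the same route as the paper: a continuation argument at the first failure time, made to work by choosing $R_0$ (via the exponent gap $\delta$ in \eqref{eq:estim_noise}) so that the noise bound sits strictly below the growth envelope $(\rho^{1-\beta}+Kt)^{\frac{1}{1-\beta}}$, and then using continuity of $\tilde Z$ to push the a priori estimate past the critical time. The only cosmetic differences are that the paper defines the stopping time through the condition $2|\xi(t)|>\tilde r(t)$ and uses an additive margin ($-1$) where you use a multiplicative one ($\tfrac14$ versus $\tfrac12$).
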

\begin{remk}
In this lemma we assume only existence of a solution but not a uniqueness.
\end{remk}
\begin{proof}
% Without loss of generality we may assume that $\xi(0)=0.$
Let $|\tilde Z(0)|>  2 R_{rad}\vee 2R_{tan}.$ It follows from 
\eqref{eq:4.2.17} that \eqref{eq:assumption} and \eqref{eq:4.2.1} may fail only  if 
there is $t$ such that $|\xi(t)|> \frac{|\tilde Z(t)|}{2}.$

 It follows from \eqref{eq:estim_noise}  that
\bel{eq:1084}
\exists R_2 \ \forall t\geq 0: \ \ 2|\xi(t)|\leq (R_2^{1-\beta}+  {Kt})^{\frac{1}{1-\beta}}-1,
\ee
where $K$ is from \eqref{eq:4.2.1}.

Assume that $|\tilde Z(0)|> R_0:= R_2\vee 2 R_{rad}\vee 2R_{tan}.$  Set
\bel{eq:4.2.5}
t_0:=\inf\left\{ t \geq 0\ :\ |\xi(t)| > \frac{|\tilde Z(t)|}{2}\right\}=
\inf\left\{ t \geq 0\ :\ 2|\xi(t)| >  |\tilde r(t)| \right\}.
\ee
Note that $t_0\neq 0$ because $\tilde Z$  is continuous.

To prove the Lemma  it suffices to verify that $t_0=\infty.$
Assume the converse, i.e., $t_0\in (0,\infty).$

It follows from \eqref{eq:4.2.1} that % is increasing on $[0, t_0]$, so
 $\tilde r(t)=|\tilde Z(t)| \geq 2(R_{rad}\vee R_{tan}), t\in [0,t_0]$ and
  for any $t\in [0, t_0]:$
\[
\tilde r(t)\geq (\tilde r^{1-\beta}(0)+Kt)^{\frac{1}{1-\beta}} \geq
(R_0^{1-\beta}+Kt)^{\frac{1}{1-\beta}}.% \geq 2|\xi(t)|+1.
\]
Since $\tilde r$ is continuous,
\[
\exists \ve>0\ \forall t\in[t_0, t_0+\ve]\ :\ \ \tilde r(t)\geq
 (R_0^{1-\beta}+Kt)^{\frac{1}{1-\beta}}-1.
\]
This and \eqref{eq:1084} imply
\[
\tilde r(t)\geq 2|\xi(t)|,\ t\in[t_0,t_0+\ve],
\]
and we get a contradiction with the definition of $t_0.$

This proves the Lemma.
\end{proof}
Let's continue the proof of the theorem. Without loss of generality we will  further assume that $\xi(0)=0.$ This yields $ Z(0)=\tilde Z(0)$
and simplifies some constants.

Recall that
\[
|Z(t)|= |\tilde Z(t)+\xi(t)|\geq |\tilde Z(t)| - |\xi(t)|.
\]
If $ |Z(0)| >R_0,$ where $R_0$ is from  Lemma  \ref{lem:4.1}, then Lemma  \ref{lem:4.1} and
 the last inequality yield
\[
|Z(t)|\geq |\tilde Z(t)| /2\geq (   |Z(0)|^{1-\beta}+
Kt)^{\frac{1}{1-\beta}}/2, \ t\geq 0
\]
and the first part of the Theorem is proved.

\begin{lem}\label{lem:4.2}
Let   $Z(0)=x$, $|x|> R_0$, where $R_0$ is from Lemma \ref{lem:4.1}.
Then  there exists a limit
\[
\vf_{Z_x}(+\infty):=\lim_{t\to\infty}\frac{Z_x(t)}{|Z_x(t)|}.
\]
\end{lem}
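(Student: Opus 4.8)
The plan is to pass to the auxiliary process $\tilde Z(t)=Z(t)-\xi(t)$, which by \eqref{eq:4.1.0} is absolutely continuous with $\frac{d}{dt}\tilde Z(t)=A(Z(t))$ for a.e.\ $t$, and to show that the unit vector $\tilde\vf(t):=\tilde Z(t)/|\tilde Z(t)|$ converges. By Lemma \ref{lem:4.1}, since $|x|>R_0$, the bounds \eqref{eq:assumption} and \eqref{eq:4.2.1} hold for all $t\geq 0$; in particular $\tilde Z(t)$ stays bounded away from $0$ (so $\tilde\vf$ is well defined and absolutely continuous), $|\xi(t)|\leq|\tilde Z(t)|/2$, and
\[
|\tilde Z(t)|\geq \big(|x|^{1-\beta}+Kt\big)^{\frac{1}{1-\beta}}=:g(t)\sim (Kt)^{\frac{1}{1-\beta}},\ t\to\infty .
\]
The strategy is to prove that the angular speed is integrable in time,
\[
\int_0^\infty \Big|\frac{d}{dt}\tilde\vf(t)\Big|\,dt<\infty,
\]
which forces $\tilde\vf$ to be Cauchy as $t\to\infty$, and then to transfer the resulting limit to $\vf(t)=Z(t)/|Z(t)|$.

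Differentiating $\tilde\vf=\tilde Z/|\tilde Z|$ gives, a.e.,
\[
\frac{d}{dt}\tilde\vf(t)=\frac{1}{|\tilde Z(t)|}\Big(A(Z(t))-\lg A(Z(t)),\tilde\vf(t)\rg\,\tilde\vf(t)\Big),
\]
so $|\frac{d}{dt}\tilde\vf(t)|$ equals $|\tilde Z(t)|^{-1}$ times the norm of the component of $A(Z(t))$ orthogonal to $\tilde\vf(t)$. I split $A=A_{rad}+A_{tan}$. The tangential part is harmless: by \eqref{eq:A_t_infty} its orthogonal component is at most $C_{tan}|Z(t)|^{\beta-\gamma}$. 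The main point, and the step I expect to be the chief obstacle, is the radial part: although $A_{rad}(Z)=a(Z)|Z|^{\beta-1}Z$ is \emph{parallel to $Z$}, it need not be parallel to $\tilde Z$, and I must control the small angle $\theta(t)$ between $Z(t)$ and $\tilde Z(t)$. Writing $Z=\tilde Z+\xi$ and decomposing $\xi$ into its components parallel and orthogonal to $\tilde Z$, the orthogonal component of $Z$ is exactly the orthogonal part of $\xi$, whence $\sin\theta\leq |\xi|/|Z|\leq 2|\xi(t)|/|\tilde Z(t)|$ on the set where $|\xi|\leq|\tilde Z|/2$. Since $|A_{rad}(Z)|=a(Z)|Z|^\beta$, its orthogonal-to-$\tilde\vf$ component has norm $a(Z)|Z|^\beta\sin\theta\leq 2a(Z)|Z|^\beta|\xi(t)|/|\tilde Z(t)|$.

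Combining the two contributions and using $|Z(t)|\leq\tfrac32|\tilde Z(t)|$ together with the boundedness of $a$ at infinity (a consequence of \eqref{eq:298} and continuity of $\bar a$ on the sphere), I obtain, for $t$ large,
\[
\Big|\frac{d}{dt}\tilde\vf(t)\Big|\leq C_1\,|\tilde Z(t)|^{\beta-2}\,|\xi(t)|+C_2\,|\tilde Z(t)|^{\beta-\gamma-1}.
\]
Inserting the lower bound $|\tilde Z(t)|\geq g(t)\sim t^{1/(1-\beta)}$ and the noise estimate \eqref{eq:estim_noise}, the first term is of order $t^{\frac{\beta-2}{1-\beta}}\cdot t^{\frac{1}{1-\beta}-\delta}=t^{-1-\delta}$ and the second of order $t^{\frac{\beta-\gamma-1}{1-\beta}}=t^{-1-\frac{\gamma}{1-\beta}}$. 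Both exponents are strictly below $-1$ because $\delta>0$ and $\gamma>0$, so the angular speed is integrable and $\tilde\vf(t)$ converges to some $\tilde\vf(\infty)$. Finally, since $\sin\theta(t)\leq 2|\xi(t)|/|\tilde Z(t)|\sim t^{-\delta}\to 0$, the directions of $Z(t)$ and $\tilde Z(t)$ coincide in the limit, so $\vf(t)=Z(t)/|Z(t)|\to\tilde\vf(\infty)$; this is the desired limit $\vf_{Z_x}(+\infty)$.
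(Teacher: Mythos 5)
Your proposal is correct and follows essentially the same route as the paper: pass to $\tilde Z=Z-\xi$, use Lemma \ref{lem:4.1} to keep $\tilde Z$ away from $0$ with the lower growth bound, prove $\int^\infty|\frac{d}{dt}\tilde\vf(t)|\,dt<\infty$ by splitting $A$ into radial and tangential parts, and transfer the limit back to $Z/|Z|$ via $|\xi(t)|=o(|\tilde Z(t)|)$. The only cosmetic difference is that you bound the radial contribution geometrically through the angle between $Z$ and $\tilde Z$ (via $\sin\theta\leq|\xi|/|Z|$), whereas the paper reaches the identical estimate algebraically by applying the projection identity $(|z|^2I_d-zz^T)z=0$ to $Z=\tilde Z+\xi$.
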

\begin{proof}
It follows from Lemma \ref{lem:4.1} that $t^{\frac{1}{1-\beta}}=O(|Z(t)|)= O(|\tilde Z(t)|), \ t\to\infty.$
So
\bel{eq:4.3.1}
|Z(t)-\tilde Z(t)| =|\xi(t)|=o (|Z(t)|)= o(|\tilde Z(t)|),\ t\to\infty.
\ee
Hence, it is sufficient to verify existence of the limit
$\tilde \vf(t) : =\frac{\tilde Z(t)}{|\tilde Z(t)|}
$ as $t\to\infty.$ This limit will automatically coincide with   $\lim_{t\to\infty}\frac{ Z(t)}{|Z(t)|}.$

It follows from the proof of Lemma \ref{lem:4.1} that $\tilde Z(t)\neq 0$
 for all $t\geq 0$.

Since $\tilde \vf(t)$ is absolutely continuous function, in order
to prove existence of $\lim_{t\to\infty }\tilde \vf(t)$ it suffices to show that
\bel{eq:4.4.1}
\int_1^\infty |\frac{d \tilde \vf(t)}{dt}| dt<\infty.
\ee
We have
\bel{eq:4.4.2}
\frac{d \tilde \vf(t)}{dt} =\frac{d }{dt} \left( \frac{ \tilde Z(t)}{|\tilde Z(t)|}\right)=\frac{|\tilde Z(t)|^2 I_d- \tilde Z(t) \tilde Z^T(t)}{|\tilde Z(t)|^3}\cdot \frac{d \tilde Z(t)}{dt},
\ee
where $I_d$ is $d\times d$ identity matrix. 

So, \eqref{eq:4.1.0}, \eqref{eq:4.3.1},   \eqref{eq:4.4.2}, and Lemma \ref{lem:4.1} yield
that for $t\geq 1:$
\bel{eq:1356}
\left|\frac{d \tilde \vf(t)}{dt}\right| =\left| \frac{|\tilde Z(t)|^2 I_d- \tilde Z(t) \tilde Z^T(t)}{|\tilde Z(t)|^3}\cdot
\left(A_{rad}(Z(t)) + A_{tan}(Z(t))\right)\right|\leq
\ee
\[
\left| \frac{|\tilde Z(t)|^2 I_d- \tilde Z(t) \tilde Z^T(t)}{|\tilde Z(t)|^3}\cdot a(Z(t))|Z(t)|^{\beta-1}Z(t) \right|
+ K_1  \frac{|A_{tan}(Z(t))|}{|\tilde Z(t)|}\leq
\]
\[
\left| \frac{|\tilde Z(t)|^2 I_d- \tilde Z(t) \tilde Z^T(t)}{|\tilde Z(t)|^3}\cdot a(Z(t))|Z(t)|^{\beta-1}(\tilde Z(t) +\xi(t)) \right|
+ K_2  \frac{|Z(t)|^{\beta-\gamma}}{|\tilde Z(t)|}.
\]
Notice that for any $z\in \mbR^d:$
\[
(|z|^2 I_d -z z^T)z = |z|^2 z -z z^T z = |z|^2 z -z |z|^2 = 0.
\]
 So, the right hand side in \eqref{eq:1356} equals
\[
\left| \frac{|\tilde Z(t)|^2 I_d- \tilde Z(t) \tilde Z^T(t)}{|\tilde Z(t)|^3}\cdot a(Z(t))|Z(t)|^{\beta-1} \xi(t) \right|
+ K_2  \frac{|Z(t)|^{\beta-\gamma}}{|\tilde Z(t)|}\leq
\]
\[
K_3\left( \frac{1}{|\tilde Z(t)|}|Z(t)|^{\beta-1} |\xi(t)|
+  \frac{|Z(t)|^{\beta-\gamma}}{|\tilde Z(t)|} \right)\leq
\]
\[
K_4\left( \frac{ |\xi(t)|}{| Z(t)|^{2-\beta}}
+  \frac{1}{|Z(t)|^{1-\beta+\gamma}} \right)\leq
\]
\bel{eq:1357}
K_5\left( \frac{ 1+ t^{\frac{1}{1-\beta}-\delta}}{ t^{\frac{{2-\beta}}{1-\beta}}}
+  \frac{1}{ t^{\frac{{1-\beta+\gamma}}{1-\beta}}} \right)=
K_5\left( \frac{ 1}{ t^{1+\frac{{1}}{1-\beta}}}+
 \frac{ 1}{ t^{1+\delta}}
+  \frac{1}{ t^{1+\frac{{\gamma}}{1-\beta}}} \right).
\ee

This implies \eqref{eq:4.4.1}.

Lemma \ref{lem:4.2} is proved.
\end{proof}

Let us complete the proof of Theorem \ref{thm:integral_eq_growth}. It
follows from \eqref{eq:4.3.1} that it suffices  to verify that

\[\tilde r(t) \sim \left( (1-\beta) \bar a(\vf_{Z}(+\infty)) t  \right)^{\frac{1}{1-\beta}}  \mbox{ as } t\to\infty,
\]
where $\vf_{Z}(+\infty)$ is from Lemma \ref{lem:4.2}.

It follows from Lemmas \ref{lem:4.1}, \ref{lem:4.2}, equations \eqref{eq:4.1.2},
 \eqref{eq:4.3.1}, and \eqref{eq:298}  that
 \[
 \tilde r^{1-\beta}(t)=\tilde r^{1-\beta}(0)+
 \int_0^t
  [...] ds,
\]
where expression in brackets converges to $(1-\beta) \bar a(\vf_{Z}(+\infty))$ as $s\to\infty.$
% \[
% \tilde r^{1-\beta}(t)=\tilde r^{1-\beta}(0)+
% \int_0^t
% (1-\beta) \tilde |\tilde Z(s)|^{-\beta-1}(s) |\tilde Z(s)+\xi(s)|^{\beta-1}(s)
% {\lg \tilde Z(s),   \tilde Z(s)+\xi(s)\rg} a(\tilde Z(s)+\xi(s)) ds.
% \]
% It follows from Lemma \ref{lem:4.1} that $|\xi(s)|= o(|\tilde Z(s)|)$ as $s\to\infty.$ So
% \[
% \lim_{s\to\infty}\frac{|\tilde Z(s)|}{|\tilde Z(s)+\xi(s)|} =1 \ \mbox{ and }\
%        \lim_{s\to\infty}\frac{\lg \tilde Z(s),   \tilde Z(s)+\xi(s)\rg}{|\tilde Z(s)| |\tilde Z(s)+\xi(s)|}  =1.
% \]
% Lemma \ref{lem:4.2} and assumption \eqref{eq:298} yield  the convergence
% \[
% \lim_{s\to\infty} a(\tilde Z(s)+\xi(s)) =\bar a (\vf_Z(+\infty)).
% \]
Therefore
\[
\tilde r^{1-\beta}(t)\sim   (1-\beta) \bar a(\vf_{Z}(+\infty )) t,\ t\to\infty.
\]
Theorem \ref{thm:integral_eq_growth} is proved.
\end{proof}

%\newline
\bigskip

%If $(R(0), \Phi(0))=(r, \vf)\in [0,\infty)\times\mbR^d,$ then we will denote a solution by $(R_{r, \vf}(t), \Phi_{r, \vf}(t)).$

% The following general result implies Assumption \ref{assum:2} if 
% we suppose  that % in a neighborhood of 0 
% the vector field $A$ is of the form
% \begin{equation*}
% A(x)=f(r,\phi )r^{\beta }\phi +g(r,\phi )r^{\beta +\delta },
% \end{equation*}
% where  $f$  is a real-valued, positive function, $g$ is a vector-valued function,
%  $f, g$ are continuous in $r, \phi$ and Lipschitz in $\phi$, and $\beta+\delta<1.$ 

\begin{proof}[Proof of Lemma \ref{lem:3.5}]
It is sufficient  to check the following condition on modulus of continuity, see \cite{Billingsley}:
\[
\forall T>0 \ \forall \mu>0\ \exists \delta>0\ \ \
\limsup_{\ve\to0}\Pb\left(
\exists s,t\in[0,T], \ |s-t|\leq\delta\ : \ \ \  | X_\ve(s)-X_\ve(t)|\geq \mu\right)<\mu.
\]

% The noise converges to 0  uniformly on compact sets, so it is sufficient to verify
% weak  relative of $\{\int_0^{\cdot}A(X_\ve(s))ds\}_{\ve\in(0,1]}$ in $C([0,T])$.

Since the vector field $A$ is of linear growth at the infinity, we have for any $T>0:$
\[
\lim_{M\to+\infty}\sup_{\ve\in (0,1]}\Pb(\sup_{t\in[0,T]}|X_\ve(t)|\geq M)=0.
\]
%Therefore, without loss of generality we may assume that
Denote
\[
K(\mu,M):=\sup_{\mu\leq |x|\leq M}|A(x)|.
\]
%for some (and hence for every) constant $\mu>0.$

We have
\[
\Pb\big(\exists s,t\in[0,T], \ |s-t|\leq\delta\ : \ \ \   | X_\ve(s)-X_\ve(t)|\geq \mu\big)\leq
\]
% \[
% \Pb(\forall s,t\in[0,T], \ |s-t|\leq\delta\ \ \ \   | X_\ve(s)-X_\ve(t)| \geq \mu/2,\ \ \
% |X_\ve(z)|\geq \mu/2, z\in[s,t] )\leq
% \]
\[
\Pb\left(\exists s,t\in[0,T], \ |s-t|\leq\delta\ : \ \ \   | \int_s^tA(X_\ve(z))dz| \geq \mu/4\ \mbox{ and } \
\mu/2\leq |X_\ve(z)|\leq M, z\in[s,t] \right)+
\]
\[
\Pb\big(\ve\sup_{s\in[0,T]}|B_\alpha(s)| \geq \mu/4\big)+ \sup_{\ve\in (0,1]}\Pb\big(\sup_{t\in[0,T]}|X_\ve(t)|\geq M\big)
\leq
\]
\[
\Pb\big(  \delta K(\mu/2, M) \geq \mu/4\big)+
\Pb\big(\sup_{s\in[0,T]}|B_\alpha(s)| \geq \mu/4\ve\big) + \sup_{\ve\in (0,1]}\Pb\big(\sup_{t\in[0,T]}|X_\ve(t)|\geq M\big)=
\]
\bel{eq:1441}
\Pb\big(\sup_{s\in[0,T]}|B_\alpha(s)| \geq \mu/4\ve\big) + \sup_{\ve\in (0,1]}\Pb\big(\sup_{t\in[0,T]}|X_\ve(t)|\geq M\big)
\ee
if $\delta < \frac{\mu}{4K(\mu/2, M)}.$

For any fixed $\mu $, the right-hand side of \eqref{eq:1441} can be made arbitrarily small if $M$ is sufficiently large and $\ve $ is sufficiently small. 
  This proves the Lemma.
\end{proof}

\begin{proof}[Proof of Lemma \ref{lem:3.4}.]
Introduce the following notations, see  \eqref{eq:space-time},
\[
\wt x:= x \ve^{\frac{-\alpha}{  \alpha+\beta-1}}, \ \ \  \ \wt  X(t):=\wt  X_{\ve, \wt  x}(t):= \ve^{\frac{-\alpha}{  \alpha+\beta-1}}   X_{\ve,x} (\ve^{\frac{\alpha(1-\beta)}{ \alpha+\beta-1}}t), \ \ 
B^{(\ve)}_\alpha(t):=\ve^{\frac{\beta-1}{  \alpha+\beta-1}} B_\alpha(\ve^{\frac{\alpha(1-\beta)}{ \alpha+\beta-1}}t)
\]

Then $\wt  X(t)$ satisfies \eqref{eq:657}.

It follows from \eqref{eq:3.6.-1} that  
\bel{eq:1498}
\frac{X_{\ve,x}(\tau_\delta^{\ve,x})}{|X_{\ve,x}(\tau_\delta^{\ve,x})|}= 
\frac{\wt  X_{\ve,\wt  x}(\wt  \tau_{\ve^{\frac{-\alpha}{\alpha+\beta-1}} \delta   }^{\ve,\wt  x})}{|\wt  X_{\ve,\wt  x}(\wt  \tau_{\ve^{\frac{-\alpha}{\alpha+\beta-1}} \delta   }^{\ve,\wt  x})|};
\ee
\bel{eq:1499}
\frac{|X_{\ve,x}(\tau_\delta^{\ve,x})|}{\delta} = 
\frac{|\wt  X_{\ve,\wt  x}(\wt  \tau_{\ve^{\frac{-\alpha}{\alpha+\beta-1}} \delta
   }^{\ve,\wt  x})|}{\ve^{\frac{-\alpha}{\alpha+\beta-1}} \delta}\ ,
 \ \ \ \ \tau_\delta^{\ve,x} = \ve^{\frac{\alpha(1-\beta)}{ \alpha+\beta-1}}\; \wt  \tau_{\ve^{\frac{-\alpha}{\alpha+\beta-1}} \delta   }^{\ve,\wt  x}
\ee

Further the arguments are similar to the  proof  of Theorem \ref{thm:integral_eq_growth}.
Set 
\[
\hat X_{\ve, \wt  x}(t):=\wt  X_{\ve, \wt  x}(t)- B^{(\ve)}_\alpha(t), \ \ 
\hat r_t:=|\hat X_{\ve, \wt  x}(t)|.
\]

Assume that $\omega$ is such that 
$|\wt  X_{\ve, \wt  x}(t)|\leq \hat r_t/2,
t\in [0, \wt  \tau_{\ve^{\frac{-\alpha}{\alpha+\beta-1}} \delta }^{\ve,\wt  x} ]
$, cf. \eqref{eq:assumption}.

If $R_0$ is sufficiently large, $\delta_2$ is sufficiently small and $R\geq R_0,\ \delta\in (0,\delta_2)$, then applying 
\eqref{eq:A_t_0}, \eqref{eq:A_rad_ve}, \eqref{eq:A_tan_ve}, and               \eqref{eq:4.1.2}  we get for this $\omega$:
\[
(\hat r(t))^{{1-\beta}}\geq K(\delta_2) t, t\in [0, \wt  \tau_{\ve^{\frac{-\alpha}{\alpha+\beta-1}} \delta }^{\ve,\wt  x} ],
\]
and moreover
\bel{eq:1529}
\wt  \tau_{\ve^{\frac{-\alpha}{\alpha+\beta-1}} \delta }^{\ve,\wt  x}\leq 
\frac{\ve^{\frac{-\alpha}{(1-\beta)(\alpha+\beta-1)}} \delta^{\frac{1}{1-\beta}}}{K}
\ee

We get \eqref{eq:lem3.5.2}, \eqref{eq:lem3.5.3} if we  use \eqref{eq:Khintchine}, 
 \eqref{eq:1499}  and reasoning of Lemma \ref{lem:4.1}.

To prove \eqref{eq:lem3.5.1} we have to apply \eqref{eq:A_t_0}, \eqref{eq:A_tan_ve}, \eqref{eq:1498} \eqref{eq:1529},   and  the arguments
 of Lemma \ref{lem:4.2} (see \eqref{eq:1356}, \eqref{eq:1357}).
\end{proof}

\bigskip

\begin{thm}\label{thm:Lip_expl}
Consider a system of ODEs in $\mbR\times\mbR^d:$
\bel{eq:RF}
\ba
dR(t)=a(R(t), \Phi(t)) R^\beta(t) dt, \\
d\Phi(t)=b(R(t), \Phi(t)) R^{\beta+\delta-1}(t) dt,
\ea
\ee
where $\delta\in(0,1),\ \beta\in (-\infty,1)$, and functions  $a: [0,\infty)\times\mbR^d\to\mbR$, $b: [0,\infty)\times\mbR^d\to\mbR^d$ are bounded and 
continuous.

Assume that

\noindent
1)  the function $a$ is positive and separated from zero:
\bel{eq:ass-drift}
\exists  a_\pm>0\ \forall r\geq 0, \ \vf\in \mbR^d: \ \ a_-\leq a(r,\vf)\leq a_+;
\ee
2) the functions  $a$ and $b$ are Lipschitz continuous in $\vf$ uniformly in $r: $
\[
\exists L\ \  \forall r\geq 0\ \vf_1, \vf_2\in \mbR^d: \ \ |a(r,\vf_1)- a(r,\vf_2)|+|b(r,\vf_1)- b(r,\vf_2)|\leq L |\vf_1-\vf_2|.
\]
Then for any initial starting point $(r, \vf)\in (0,\infty)\times\mbR^d$ there is a unique solution $(R_{r, \vf}(t), \Phi_{r, \vf}(t))$
to equation \eqref{eq:RF}. Moreover, the map
\[
 [0,\infty)\times (0,\infty)\times\mbR^d\ni (t, r, \vf)\to (R_{r, \vf}(t), \Phi_{r, \vf}(t))
\]
can be extended by continuity to $ [0,\infty)\times [0,\infty)\times\mbR^d$, and
 its extension $(R_{0, \vf}(t), \Phi_{0, \vf}(t))$ is a unique solution to  \eqref{eq:RF}
among all solutions satisfying  $R_{0, \vf}(t)>0$ for $t>0$.
\end{thm}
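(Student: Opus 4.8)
The plan is to exploit the fact that, since $a\ge a_->0$, the radial component $R$ is \emph{strictly increasing} along any solution with $R>0$, and can therefore be used as a new independent variable in place of $t$; this decouples the angular dynamics from the time parametrization. I would first treat a strictly positive starting radius $r>0$. Because $\frac{dR}{dt}=a(R,\Phi)R^\beta\ge a_-R^\beta>0$, we have $R(t)\ge r$ and $t\mapsto R(t)$ is a strictly increasing bijection onto its range; dividing the two equations in \eqref{eq:RF} gives the reduced equation
\[
\frac{d\Phi}{dR}=\frac{b(R,\Phi)}{a(R,\Phi)}\,R^{\delta-1},\qquad \Phi\big|_{R=r}=\varphi .
\]
Its right-hand side is bounded by $\tfrac{\|b\|_\infty}{a_-}R^{\delta-1}$ and is Lipschitz in $\Phi$ (since $b$ is Lipschitz in $\varphi$, while $a$ is bounded below away from zero and, as the function $f$ in the vector field above, Lipschitz in $\varphi$), so this $\mathbb R^d$-valued ODE has a unique solution $\Phi=\Phi(R)$ with no blow-up in finite $R$. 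Recovering time via $t(R)=\int_r^R\frac{dR'}{a(R',\Phi(R'))\,R'^\beta}$, which because $\beta<1$ behaves like $R^{1-\beta}\to\infty$, shows the solution exists and is unique for all $t\in[0,\infty)$, proving the first assertion.

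The heart of the argument is the passage to $r=0$, and it rests on two integrability thresholds at the origin: $\int_0 R^{\delta-1}\,dR<\infty$ because $\delta>0$, and $\int_0 R^{-\beta}\,dR<\infty$ because $\beta<1$. The first lets me define $\Phi_{0,\varphi}$ directly as the unique solution of the singular integral equation
\[
\Phi(R)=\varphi+\int_0^R\frac{b(R',\Phi(R'))}{a(R',\Phi(R'))}\,R'^{\delta-1}\,dR',
\]
obtained by Picard iteration on a small interval $[0,R_*]$ (the integrable, Lipschitz kernel yields a contraction) and then continued to all $R$; by construction $\Phi_{0,\varphi}(R)\to\varphi$ as $R\to0+$, which is precisely the prescribed exit angle. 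The second integrability fact lets me define $t(R)=\int_0^R\frac{dR'}{a(R',\Phi_{0,\varphi}(R'))\,R'^\beta}$, a finite, continuous, strictly increasing function with $t(0)=0$ and $t(\infty)=\infty$; inverting it gives $R_{0,\varphi}(t)$ with $R_{0,\varphi}(0)=0$ and $R_{0,\varphi}(t)>0$ for $t>0$, and then $\Phi_{0,\varphi}(t):=\Phi_{0,\varphi}(R_{0,\varphi}(t))$. Substituting back shows $(R_{0,\varphi},\Phi_{0,\varphi})$ solves \eqref{eq:RF}.

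For continuity of $(t,r,\varphi)\mapsto(R_{r,\varphi}(t),\Phi_{r,\varphi}(t))$ and its extension to $r=0$, I would run a Gronwall estimate on the reduced integral equation: the Lipschitz-in-$\Phi$, integrable-in-$R$ kernel gives continuous dependence of $\Phi_{r,\varphi}(R)$ on $(r,\varphi)$, uniform on compact $R$-ranges, which survives the limit $r\to0$ since the missing initial segment contributes at most $\tfrac{\|b\|_\infty}{a_-}\tfrac{r^\delta}{\delta}\to0$. As $t(R)$ and its inverse depend continuously on the data through the same integrals, composition yields joint continuity of the map up to $r=0$. Finally, uniqueness of the extension among all solutions with $R(t)>0$ for $t>0$ follows because any such solution is strictly increasing in $R$ on $(0,\infty)$, hence reparametrizes by $R$, solves the same reduced equation, and satisfies $\Phi(R)\to\varphi$ as $R\to0+$; the uniqueness of the singular integral equation forces it to coincide with $(R_{0,\varphi},\Phi_{0,\varphi})$.

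The main obstacle I expect is exactly this $r=0$ analysis: establishing well-posedness of the singular integral equation for $\Phi$ at the origin and verifying that the prescribed angle $\varphi$ is attained as $R\to0+$, together with the matching uniqueness among all solutions that leave the origin. Once the two quantitative thresholds $\delta>0$ and $\beta<1$ are exploited to make the relevant kernels integrable, the remaining Picard and Gronwall estimates and the inversion of $t(R)$ are routine.
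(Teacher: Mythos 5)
Your proposal is correct and takes essentially the same route as the paper: reparametrizing the angular dynamics by the radius $R$ is exactly the paper's time change that makes $\tilde R(t)=r+t$, and both arguments then rest on the integrability of $R^{\delta-1}$ (yielding a well-posed Picard/Gronwall problem for $\Phi$ down to $R=0$) and of $R^{-\beta}$ (for recovering and inverting the time parametrization), with the same uniqueness reasoning for solutions that leave the origin. The only cosmetic difference is that you construct the $r=0$ solution directly from the singular integral equation, whereas the paper first obtains it by compactness from solutions with $r_n\to 0$ and then identifies it through the same reduced equation.
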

\begin{proof}[Proof of Theorem \ref{thm:Lip_expl}.]
Existence of solutions for $r>0, \vf\in\mbR^d$ follows from the Peano theorem. %The positivity of $a$ yields that any solution $R_{r, \vf}(t) $ is increasing in $t$ if $r>0.$

It follows from \eqref{eq:ass-drift} and the comparison with the  equation $d\rho_\pm(t) = a_\pm\rho_\pm^\beta(t) dt$
that
\bel{eq:bound_below}
 (a_- (1-\beta)t)^{\frac{1}{1-\beta}}\leq R_{r, \vf}(t)  \leq (r^{1-\beta} + a_+ (1-\beta)t)^{\frac{1}{1-\beta}}
\ee
  for  any $r>0, \vf\in\mbR^d, t\geq 0.$ Moreover, any solution  $R_{0, \vf}(t)$ 
 such that $R_{0, \vf}(t)>0, \ t>0$ also satisfies \eqref{eq:bound_below}.

It follows from the compactness arguments that there is a sequence $\{r_n\},\ \lim_{n\to\infty}  r_n=0$ 
such that the sequence  $\{(R_{r_n, \vf}(\cdot), \Phi_{r_n, \vf}(\cdot))\}$ converges uniformly on compact sets. It easy to see that
its limit $(R_{0, \vf}(t), \Phi_{0, \vf}(t))$ is a solution to \eqref{eq:RF} and satisfies  \eqref{eq:bound_below} with $r=0.$

Let us prove uniqueness. Apply transformation of time arguments, e.g. \cite[Chapter IV \S 4]{IW}.
Set
\[
\ba
 A_{r,\vf}(t):= \int_0^t a(R_{r,\vf}(z), \Phi_{r,\vf}(z)) R_{r,\vf}^\beta(z) dz,\\
 \tilde R_{r,\vf}(t):=R_{r,\vf}(A_{r,\vf}^{-1}(t)),\\
 \tilde \Phi_{r,\vf}(t):=\Phi_{r,\vf}(A_{r,\vf}^{-1}(t)),
\ea
\]
where $A_{r,\vf}^{-1}$ is the inverse function. The function  $A_{r,\vf}^{-1}$ is well defined  because $A_{r,\vf}$ is continuous and increasing function.

Then 
\[
d\tilde R_{r,\vf}(t)= dt,  
\]
\[
d\tilde \Phi_{r,\vf}(t)=\frac{b(\tilde R_{r,\vf}(t), \tilde \Phi_{r,\vf}(t)) \tilde R^{\beta+\delta-1}(t)}{a(\tilde R_{r,\vf}(t), \tilde \Phi_{r,\vf}(t)) \tilde  R^\beta(t)} dt 
=\frac{b(\tilde R_{r,\vf}(t), \tilde \Phi_{r,\vf}(t)) }{a(\tilde R_{r,\vf}(t), \tilde \Phi_{r,\vf}(t))} \tilde R_{r,\vf}^{\delta-1}(t) dt.
\]
Hence 
\[
\tilde R_{r,\vf}(t)= r+t,
\]
and
\bel{eq:Phi_tilde}
\tilde \Phi_{r,\vf}(t)= 
r+\int_0^t\frac{b(r+z, \tilde \Phi_{r,\vf}(z)) }{a(r+z, \tilde \Phi_{r,\vf}(z))}  (r+z)^{\delta-1} dz.
\ee
It follows from the assumptions of the Theorem that the function $(r,t,\vf)\to \frac{b(r+t, \vf) }{a(r+t, \vf)} $ is bounded and Lipschitz continuous  in $\vf$ uniformly in $r,t.$
Therefore, the local  integrability of $t\to(r+t)^{\delta-1}$ yields existence and uniqueness of a solution to \eqref{eq:Phi_tilde} for $r\geq0, \vf\in\mbR^d$, and 
continuity of $\tilde \Phi_{r,\vf}(t)$ in $(r,\vf,t)\in[0,\infty)\times[0,\infty)\times\mbR^d.$

To get $(R_{r,\vf}(t), \Phi_{r,\vf}(t))$ we have to make the inverse transformation of time. Set
\[
\tilde A_{r,\vf}(t):= \int_0^t a^{-1}(\tilde R_{r,\vf}(z), \tilde \Phi_{r,\vf}(z)) \tilde R_{r,\vf}^{-\beta}(z) dz.
\]
Notice that $\tilde A_{r,\vf}(t)$  is well defined and increasing in $t$. 

The functions
\[
\ba
R_{r,\vf}(t)=\tilde R_{r,\vf}(\tilde A_{r,\vf}^{-1}(t)),\\
\Phi_{r,\vf}(t)=\tilde \Phi_{r,\vf}(\tilde A_{r,\vf}^{-1}(t))
\ea
\]
satisfy \eqref{eq:RF}. Uniqueness of a solution to \eqref{eq:RF} follows from uniqueness for  \eqref{eq:Phi_tilde} and the fact that the
correspondence $(R_{r, \vf}, \Phi_{r, \vf}) \leftrightarrow (\tilde R_{r, \vf}, \tilde \Phi_{r, \vf})$
is one-to-one. 

The function $\tilde A_{r,\vf}(t)$ is continuous in $(r,\vf,t)\in[0,\infty)\times[0,\infty)\times\mbR^d$ due to the Lebesgue dominated convergence theorem and continuity of $\tilde \Phi_{r,\vf}(t)$. So, the inverse function $\tilde A^{-1}_{r,\vf}(t)$ is also continuous in $(r,\vf,t)$.
 This and continuity   of $(\tilde R_{r, \vf}, \tilde \Phi_{r, \vf})$ in $(r,\vf,t)$ yields the continuity of  $(R_{r, \vf}, \Phi_{r, \vf})$ .

\end{proof}

\bigskip

\thanks{{\bf Acknowledgements}.  A. Pilipenko 
was partially supported by the Alexander von Humboldt Foundation within 
the Research Group Linkage Programme {\it Singular diffusions: analytic and stochastic approaches}.
 The research of A. Pilipenko and F. Proske was carried out with support of the Senter for internasjonalisering av utdanning (SIU), within the project Norway-Ukrainian  Cooperation in Mathematical Education, project number CPEA-LT-2016/10139.}

\end{document}